\newcommand{\R}{\mathbb{R}} \newcommand{\N}{\mathbb{N}}              
 \newcommand{\C}{\mathbb{C}}      
\newcommand{\Rn}{{\R}^n}
\newcommand{\ca}{{\mathcal A}}
\newcommand{\cb}{{\mathcal B}}
\newcommand{\cd}{{\mathcal D}}
\newcommand{\ce}{{\mathcal E}}
\newcommand{\cf}{{\mathcal F}}
\newcommand{\cfi}{{\mathcal F}^{-1}}
\newcommand{\cs}{{\mathcal S}}
\newcommand{\lpvec}{L_{\vec{p}}}
\newcommand{\lloc}{L_{1,\mathrm{loc}}}
\newcommand{\supp}{\operatorname{supp}}
\newcommand{\dist}{\operatorname{dist}}
\newtheorem{thm}{Theorem}
\newtheorem*{theorem*}{{\bf Theorem\/}}
\newtheorem{defn}{Definition}
\newtheorem{cor}{Corollary}
\newtheorem{lem}{Lemma}
\newtheorem{prop}{Proposition}
\newtheorem{rem}{Remark}
\title[Mixed Norms and Diffeomorphisms]{%
Anisotropic, Mixed-Norm Lizorkin--Triebel Spaces and Diffeomorphic Maps}
\author[Johnsen, Munch Hansen, Sickel]{%
J.~Johnsen,\address{Department of Mathematical Sciences, Aalborg University,
Fredrik Bajers Vej 7G, DK-9220 Aalborg {\O}st, Denmark} 
\email{jjohnsen@math.aau.dk} 
S.~Munch~Hansen,
\address{Department of Mathematical Sciences, Aalborg University,
Fredrik Bajers Vej 7G, DK-9220 Aalborg {\O}st, Denmark} 
\email{sabrina@math.aau.dk}
 W.~Sickel
\address{Mathematisches Institut, Ernst-Abbe-Platz 2, D-07740 Jena, Germany} 
\email{Winfried.Sickel@uni-jena.de}
}
\thanks{\copyright\,2016 by the authors.
\\[3\baselineskip]{\tt Published under open access in Journal of function spaces, Article ID 964794, 2014.}}
\begin{document}
 \begin{abstract}
This article gives general results on invariance of anisotropic Lizorkin--Triebel spaces with  
mixed norms under coordinate transformations on Euclidean space, open sets 
and cylindrical domains.
 \end{abstract}
\enlargethispage{3\baselineskip} 
\maketitle
 
\section{Introduction}
This paper continues a study of anisotropic Lizorkin--Triebel spaces 
$F^{s,\vec a}_{\vec p,q}(\Rn)$ with mixed norms, which was begun in
\cite{JoSi07,JoSi08} and followed up in our joint work \cite{JoHaSi12}.

First Sobolev embeddings and completeness of the scale $F^{s,\vec a}_{\vec p,q}(\Rn)$ 
were established in \cite{JoSi07}, using the Nikol'ski\u\i--Plancherel--Polya inequality for
sequences of functions in the mixed-norm space $L_{\vec p}(\Rn)$,  
which was obtained straightforwardly in \cite{JoSi07}. Then a detailed 
trace theory for hyperplanes in $\Rn$ was worked out in \cite{JoSi08}, 
e.g.\ with the novelty that the well-known borderline $s=1/p$ has to
be shifted upwards in some cases, because of the mixed norms. 

Secondly, our joint paper \cite{JoHaSi12} presented some general characterisations of 
$F^{s,\vec a}_{\vec p,q}(\Rn)$, which may be specialised to kernels of
local means, in Triebel's sense \cite{T92}.  
One interest of this is that local means have recently been useful for obtaining
wavelet bases of Sobolev spaces and especially of their generalisations to the Besov and
Lizorkin--Triebel scales. Cf.\ works of
Vybiral~\cite[Th.~2.12]{Vyb06}, 
Triebel~\cite[Th.~1.20]{Tri08}, Hansen~\cite[Th.~4.3.1]{Han10}.

In the present paper, we treat the invariance of $F^{s,\vec a}_{\vec
p,q}$ under coordinate changes. During the discussions below, the results in \cite{JoHaSi12}
are crucial for the entire strategy. 

Indeed, we address the main technical challenge to obtain invariance
of $F^{s,\vec a}_{\vec p,q}(\Rn)$ under the map 
\begin{equation*}
  f\mapsto f\circ\sigma,  
\end{equation*}
when $\sigma$ is a bounded diffeomorphism on $\Rn$. (Cf.\ Theorem~\ref{coo1}, \ref{coo2-thm} below.)
Not surprisingly, this will require the condition on $\sigma$ that it only affects
blocks of variables $x_j$ in which the corresponding integral
exponents $p_j$ are equal, and similarly for the anisotropic weights
$a_j$. Moreover, when estimating the operator norm of $f\mapsto f\circ \sigma$, 
i.e.\ obtaining the inequality 
\begin{equation}
  \|\,f\circ\sigma\,|F^{s,\vec a}_{\vec p,q}(\Rn)\|\le c   \|\,f\,|F^{s,\vec a}_{\vec p,q}(\Rn)\|,
\end{equation}
the Fourier analytic
definition of the spaces seems difficult to manage directly, so as done by Triebel~\cite{T92}
we have chosen to characterise $F^{s,\vec a}_{\vec p,q}(\Rn)$ in
terms of local means, as developed in \cite{JoHaSi12}. 

However, the diffeomorphism invariance relies not just on the local means,
but first of all also on techniques underlying them. In particular, we use the
following inequality for the maximal function $\psi_j^*f(x)$  
of Peetre-Fefferman--Stein type, which was established
in \cite[Th.~2]{JoHaSi12} for mixed norms and with uniformity with
respect to a general parameter $\theta$: 
\begin{gather*}
\big\|\, \{2^{sj} 
\sup_{\theta \in \Theta} \psi_{\theta,j}^* f\}_{j=0}^\infty \, \big| L_{\vec{p}}(\ell_q)\big\|
\le  c \, \big\|\, \{2^{sj} \varphi^*_j f\}_{j=0}^\infty \, \big| L_{\vec{p}}(\ell_q)\big\| .
\end{gather*}
Hereby the `cut-off' functions $\psi_j$, $\varphi_j$ should fulfill a
set of Tauberian and moment conditions; 
cf.\ Theorem~\ref{maxmod} below for the full statement. 
In the isotropic case this inequality originated in a well-known
article of Rychkov~\cite{Ry}, which contains a serious flaw (as pointed out in \cite{Han10});
this and other inaccuracies were corrected in \cite{JoHaSi12}.

A second adaptation of Triebel's approach is caused by the anisotropy 
$\vec a$ we treat here. In fact, our proof only extends to
e.g. $s<0$ by means of the unconventional lift operator

\begin{equation}
  \Lambda_r=\operatorname{OP}(\lambda_r),\qquad
  \lambda_r(\xi)=\sum_{j=1}^n(1+\xi_j^2)^{\frac{r}{2a_j}}.
\end{equation}
Moreover, to cover all $\vec a=(a_1,\dots,a_n)$, especially to allow irrational ratios
${a_j}/{a_k}$, we found it useful to invoke the corresponding 
pseudo-differential operators $(1-\partial_j^2)^\mu=\operatorname{OP}((1+\xi_j^2)^{\mu})$ 
that for $\mu\in\R$ are shown here to be bounded 
$F^{s,\vec a}_{\vec p,q}(\Rn)\to F^{s-2a_j\mu,\vec a}_{\vec p,q}(\Rn)$ for all $s$.

Local versions of our result, in which $\sigma$ is only defined on subsets of $\Rn$, 
are also treated below. In short form we have e.g.\ the following result
(cf.\ Theorem~\ref{thm:localVersion} below):

\begin{theorem*}
Let $U,V\subset \Rn$ be open and let $\sigma: U\to V$ be a $C^\infty$-bijection on the form 
$\sigma(x)=(\sigma'(x_1,\dots,x_{n-1}),x_n)$.
When $f\in F^{s,\vec a}_{\vec p,q}(V)$ has compact support and all $p_j$ are equal for $j<n$, 
and similarly for the $a_j$, 
then $f \circ \sigma \in F^{s,\vec a}_{\vec p,q}(U)$ and 
\begin{equation}
  \|\, f\circ \sigma\, | F^{s,\vec a}_{\vec p,q}(U)\| 
  \leq c(\supp f, \sigma) \|\, f\, | F^{s,\vec a}_{\vec p,q}(V)\|.
\end{equation}
\end{theorem*}

This is useful for introduction of Lizorkin--Triebel spaces on 
cylindrical manifolds.
However, this subject is postponed to our forthcoming paper \cite{JoMHSi2}. 
(Already this part of the mixed-norm theory has seemingly not been elucidated before).
Moreover, in \cite{JoMHSi2} we also carry over trace results from \cite{JoSi08} to spaces over  
a smooth cylindrical domain in Euclidean space e.g.\ by analysing boundedness and ranges for traces 
on the flat and curved parts of its boundary.

To elucidate the importance of the results here and in \cite{JoMHSi2}, we recall that the
$F^{s,\vec a}_{\vec p,q}$ are relevant for parabolic differential equations
with initial and boundary value conditions: when solutions are
sought in a \emph{mixed-norm} Lebesgue space $L_{\vec p}$ 
(in order to allow different properties in the space and time directions), then
$F^{s,\vec a}_{\vec p,q}$-spaces are in general \emph{inevitable} for a correct
description of non-trivial data on the \emph{curved} boundary.

This conclusion was obtained in works of P.~Weidemaier \cite{Wei98,Wei02,Wei05}, 
who treated several special cases; one may also consult the introduction of
\cite{JoSi08} for details.

\subsection*{Contents}
Section~\ref{mixd} contains a review of our notation, and the definition of anisotropic 
Lizorkin--Triebel spaces with mixed norms is recalled, together with some needed properties, 
a discussion of different lift operators and a pointwise multiplier assertion.

In Section~\ref{localMeans} results from~\cite{JoHaSi12} on
characterisation of $F^{s,\vec a}_{\vec p,q}$-spaces 
by local means are recalled and used to prove an important lemma for compactly supported 
elements in $F^{s,\vec a}_{\vec p,q}$.
Sufficient conditions for $f \mapsto f \circ \sigma$
to leave the spaces $F^{s,\vec a}_{\vec p,q}(\Rn)$ invariant for all $s\in\R$
are deduced in Section~\ref{approach1}, when $\sigma$ is a bounded diffeomorphism.
Local versions for spaces on domains are derived in
Section~\ref{derived-sect} together with isotropic results. 

\section{Preliminaries}
\label{mixd}
\subsection{Notation}
The Schwartz space $\cs(\Rn)$ contains all rapidly decreasing $C^\infty$-functions.
It is equipped with the family of seminorms, using $D^\alpha :=
(-i\partial_{x_1})^{\alpha_1}\cdots(-i\partial_{x_n})^{\alpha_n}$ for each multi-index 
$\alpha=(\alpha_1,\ldots,\alpha_n)$ with
$\alpha_j\in\N_0:=\N\cup\{0\}$, and $\langle x\rangle^2 := 1+|x|^2$, 
\begin{equation}\label{eq:seminormOneParam}
p_M(\varphi) := \sup \big\{\, \langle x\rangle^M |D^\alpha
\varphi(x)|\, \big|\, x\in\Rn, |\alpha|\leq M\big\},\quad M\in\N_0; 
\end{equation}
or with 
\begin{equation}\label{eq:seminormsS_Nalpha}
q_{N,\alpha}(\psi) := \int_{\R^n} \langle x\rangle^N |D^\alpha\psi(x)|\, dx ,
\quad N\in\N_0,\enskip \alpha\in\N_0^n.
\end{equation}
The Fourier transformation $\cf g(\xi)=\widehat{g}(\xi) = \int_{\Rn} e^{-\mathrm{i} x\cdot \xi} g(x)\, dx$ for $g\in\cs(\Rn)$
extends by duality to the dual space $\cs'(\Rn)$ of temperate distributions.

Inequalities for vectors $\vec{p}=(p_1,\ldots,p_n)$ are understood componentwise; as are functions,
e.g.~$\vec{p}\,!=p_1!\cdots p_n!$. Moreover, $t_+:=\max(0,t)$ for $t\in\R$.

For $0<\vec p\leq \infty$ the space $L_{\vec{p}}(\Rn)$ consists of all
Lebesgue measurable functions such that  
\begin{alignat}{1}
\|\, u \, | L_{\vec{p}}(\Rn)\| := 
\bigg(\int_{-\infty}^\infty \bigg( \ldots 
\bigg( \int_{-\infty}^\infty |u(x_1,\ldots ,x_n)|^{p_1} dx_1
\bigg)^{p_2/p_1}  
\ldots  
\bigg)^{p_n/p_{n-1}} dx_n \bigg)^{1/p_n} <\infty,
\end{alignat}
with the modification of using the essential supremum over $x_j$ in case $p_j=\infty$.
Equipped with this quasi-norm, $L_{\vec{p}}(\Rn)$ is a 
quasi-Banach space (normed if $p_j\ge 1$ for all $j$).

Furthermore, for $0 < q \le \infty$ we shall use the notation
$\lpvec (\ell_q)(\Rn)$ for the space of all sequences 
$\{u_k\}_{k=0}^\infty$ of Lebesgue measurable functions $u_k: \Rn \to \C$ such that
\begin{equation}
\| \, \{u_k\}_{k=0}^\infty \, |\lpvec (\ell_q)(\Rn)\| :=
\Big\| \, \Big(\sum_{k=0}^\infty |u_k(\cdot)|^q
\Big)^{1/q}\, \Big|\lpvec (\Rn)\Big\| < \infty ,
\end{equation}
with supremum over $k$ in case $q=\infty$.
This quasi-norm is often abbreviated to $\| \, u_k \, |\lpvec (\ell_q)\| $;
and when $\vec{p}=(p,\ldots,p)$ we simplify $L_{\vec{p}}$ to $L_p$.
If $\max (p_1, \ldots, p_n, q)<\infty$  sequences of 
$C_0^\infty$-functions  are dense in $\lpvec (\ell_q)$. 

Generic constants will primarily be denoted by $c$ or $C$ and when 
relevant, their dependence on certain parameters will be explicitly stated.
$B(0,r)$ stands for the ball in $\Rn$ centered at $0$ with radius $r>0$, 
and $\overline{U}$ denotes the closure of a set $U\subset\Rn$.

\subsection{Anisotropic Lizorkin--Triebel Spaces with Mixed Norms}
The scales of mixed-norm Lizorkin--Triebel spaces refines the scales of 
mixed-norm Sobolev spaces, cf.~\cite[Prop.~2.10]{JoSi08}, hence
the history of these spaces goes far back in time; the reader is referred
to~\cite[Rem.~2.3]{JoHaSi12} and~\cite[Rem.~10]{JoSi07} for a brief historical overview,
which also list some of the ways to define Lizorkin--Triebel spaces.

Our exposition uses the Fourier-analytic definition, but first we recall the definition of
the anisotropic distance function $|\cdot|_{\vec a}$, where
$\vec{a}=(a_1,\ldots,a_n)\in [1,\infty[^n$,  
on $\Rn$ and some of its properties. Using the quasi-homogeneous dilation 
$t^{\vec a}x:=(t^{a_1}x_1,\dots,t^{a_n}x_n)$ for $t\ge0$, $|x|_{\vec a}$
is for $x\in\Rn\setminus\{0\}$ defined as the unique $t>0$ such that
$t^{-\vec a}x\in S^{n-1}$ ($|0|_{\vec a}:=0$), i.e.\ 
\begin{equation}
  \frac{x_1^2}{t^{2a_1}}+\dots+  \frac{x_n^2}{t^{2a_n}}=1.
\end{equation}
By the Implicit Function Theorem, $|\cdot|_{\vec a}$ is $C^\infty$ on $\Rn\setminus\{0\}$.
We also recall the quasi-homogeneity $|t^{\vec a}x|_{\vec a}=t|x|_{\vec a}$
together with (cf.~\cite[Sec.~3]{JoSi07})
\begin{align}
  |x+y|_{\vec a}&\le |x|_{\vec a}+|y|_{\vec a},
\label{atriangle-ineq}\\
  \max(|x_1|^{1/a_1},\dots,|x_n|^{1/a_n})
  &\le |x|_{\vec a}\le |x_1|^{1/a_1}+\dots+|x_n|^{1/a_n}.\label{ax-ineq}  
\end{align} 

The definition of $F^{s,\vec a}_{\vec p,q}(\Rn)$ uses a
Littlewood-Paley decomposition, i.e.~$1=\sum_{j=0}^\infty \Phi_j(\xi)$, 
which (for convenience) is based on a  fixed $\psi\in C_0^\infty$ such
that $0\leq\psi(\xi)\leq 1$ for all $\xi$, $\psi(\xi)=1$ if
$|\xi|_{\vec a}\leq 1$ and $\psi(\xi)=0$ if $|\xi|_{\vec a}\geq 3/2$;
setting $\Phi = \psi - \psi(2^{\vec a}\cdot)$, we define 
\begin{equation}\label{unity}
\Phi_0 (\xi) := \psi (\xi), \quad \Phi_j (\xi) := \Phi(2^{-j\vec a}\xi), \quad j=1,2,\ldots 
\end{equation}

\begin{defn} \label{F-defn}
The Lizorkin--Triebel space $F^{s,\vec a}_{\vec p,q}(\Rn)$ with $s\in \R$, $0<\vec{p} < \infty$ and 
$0 < q \le \infty$ consists of all $u\in\cs'(\R^n)$ such that
\begin{equation*}
\| \, u \, | F^{s,\vec a}_{\vec p,q}(\Rn)\| := 
\Big\|\, \Big(\sum_{j=0}^\infty 2^{jsq} \left|\cfi \left(\Phi_j (\xi) 
\cf u(\xi)\right) ( \cdot )\right|^q \Big)^{1/q} \, \Big| L_{\vec{p}}(\Rn)
\Big\| < \infty .
\end{equation*}
\end{defn}

The number $q$ is called the sum exponent and the entries in $\vec p$
are integral exponents, while $s$ is a smoothness index. Usually the statements are valid for the
full ranges $0<\vec p<\infty$, $0<q\le\infty$, so we refrain from repeating these. Instead we
focus on whether $s\in\R$ is allowed or not. In the isotropic case, i.e.~$\vec a = (1,\ldots,1)$,
the parameter $\vec a$ is omitted. 

We shall also consider the closely related Besov spaces, recalled using the abbreviation
\begin{equation}\label{eq:defuj}
  u_j (x) := \cfi \left(\Phi_j (\xi)  \cf u(\xi)\right) (x), \quad x\in \Rn , \enskip j\in \N_0.
\end{equation}

\begin{defn}
The Besov space $B^{s,\vec a}_{\vec p,q}(\R^n)$ consists of
all $u\in\cs'(\R^n)$ such that
\begin{equation*}
  \|\, u\, | B^{s,\vec a}_{\vec p,q}(\R^n)(\R^n) \| :=
  \Big( \sum_{j=0}^\infty 2^{jsq}  \|\, u_j\, | L_{\vec{p}}(\R^n) \|^q\Big)^{1/q} <\infty.
\end{equation*}
\end{defn}

In~\cite{JoSi07,JoSi08} many results on these classes are elaborated,
hence we just recall a few facts.
They are quasi-Banach spaces (Banach spaces if $\min(p_1,\ldots,p_n,q)\geq 1$) and the 
quasi-norm is subadditive, when raised to the power $d:=\min(1,p_1,\ldots,p_n,q)$,
\begin{equation}
\| \, u+v \, |F^{s,\vec a}_{\vec p,q}(\Rn)\|^d \le 
\| \, u \, |F^{s,\vec a}_{\vec p,q} (\Rn)\|^d  +  \| \, v \, |F^{s,\vec a}_{\vec p,q} (\Rn)\|^d,\quad 
u,v \in F^{s,\vec a}_{\vec p,q}(\R^n).
\end{equation}
Also the spaces do not depend on the chosen anisotropic decomposition of unity (up to equivalent
quasi-norms) and there are continuous embeddings
\begin{equation}
  \label{eq:SFembedding}
  \cs (\Rn) \hookrightarrow F^{s,\vec a}_{\vec p,q} (\Rn) \hookrightarrow \cs' (\Rn),
\end{equation}
where $\cs$ is dense in $F^{s,\vec a}_{\vec p,q}$ for $q<\infty$.

Since for $\lambda>0$, the space $F^{s,\vec a}_{\vec p,q} $ coincides
with $F^{\lambda s, \lambda \vec{a}}_{\vec{p},q}$,  
cf.~\cite[Lem.~3.24]{JoSi08}, most results obtained for the scales when $\vec a\geq 1$ can be 
extended to the case $0< \vec a < 1$ 
(for details we refer to~\cite[Rem.~2.6]{JoHaSi12}).

The subspace $\lloc(\Rn)\subset \cd'(\Rn)$ of locally integrable functions is equipped
with the Fr\'{e}chet space topology defined from the seminorms
$u\mapsto  \int_{|x|\leq j} |u(x)|\, dx$, $j\in\N$.
By $C_{\operatorname{b}}(\Rn)$ we denote the Banach space of bounded, continuous
functions, endowed with the sup-norm.

\begin{lem}\label{derivative}
Let $s\in\R$ and $\alpha\in\N_0^n$ be arbitrary.
 \begin{itemize}
  \item[{\rm (i)}]
The differential operator $D^\alpha$ 
is bounded $F^{s,\vec a}_{\vec p,q}(\Rn)\to F^{s-\vec a\cdot \alpha,
  \vec{a}}_{\vec{p},q} (\Rn)$.
\item[{\rm (ii)}]
 For $s> \sum_{\ell = 1}^n \big(\frac{a_\ell}{p_\ell} - a_\ell\big)_+$
there is an embedding $F^{s,\vec a}_{\vec p,q} (\Rn)\hookrightarrow \lloc(\Rn)$.
\item[{\rm (iii)}] The embedding $F^{s,\vec a}_{\vec p,q}\hookrightarrow
C_{\operatorname{b}}(\Rn)$ holds true for $s>\frac{a_1}{p_1}+\dots+\frac{a_n}{p_n}$. 
  \end{itemize}
\end{lem}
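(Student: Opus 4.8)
For assertion (i) the plan is to descend to the Littlewood--Paley blocks and rewrite differentiation as convolution against a well-localised kernel. I would write $(D^\alpha u)_j=\cfi(\xi^\alpha\Phi_j\cf u)$ and fix an auxiliary $\widetilde\Phi\in C_0^\infty$ with $\widetilde\Phi\equiv 1$ on $\supp\Phi$ (and a companion $\widetilde\psi$ for $j=0$); since then $\widetilde\Phi_j\Phi_j=\Phi_j$, this gives $(D^\alpha u)_j=K_j*u_j$ with $K_j:=\cfi(\xi^\alpha\widetilde\Phi_j)$. Exploiting the quasi-homogeneity $\Phi_j(\xi)=\Phi(2^{-j\vec a}\xi)$ one checks that $\xi^\alpha\widetilde\Phi_j(\xi)=2^{j\,\vec a\cdot\alpha}\,m(2^{-j\vec a}\xi)$ with $m:=\xi^\alpha\widetilde\Phi\in C_0^\infty$, so that $K_j$ is, apart from the scalar $2^{j\,\vec a\cdot\alpha}$, the anisotropically $L_1$-normalised dilation of the single Schwartz function $\cfi m$. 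A change of variables then produces, for $N$ chosen large, the pointwise estimate $|(D^\alpha u)_j(x)|\le c\,2^{j\,\vec a\cdot\alpha}\,\varphi_j^* u(x)$ in terms of the Peetre maximal function. Inserting the factor $2^{(s-\vec a\cdot\alpha)j}$, taking the $\ell_q$-sum and then the $L_{\vec p}$-norm, the powers $2^{\pm j\,\vec a\cdot\alpha}$ cancel, and the maximal characterisation underlying Theorem~\ref{maxmod} yields $\|\{2^{sj}\varphi_j^* u\}\,|\,L_{\vec p}(\ell_q)\|\le c\,\|u\,|\,F^{s,\vec a}_{\vec p,q}\|$, which is the assertion.

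For (ii) and (iii) I would instead bound each block in a coarser mixed norm through the anisotropic Nikol'ski\u\i--Plancherel--Polya inequality of \cite{JoSi07}: as $\supp\cf u_j$ sits in an anisotropic ball of radius comparable to $2^j$, for every $\vec r\ge\vec p$ one has $\|u_j\,|\,L_{\vec r}\|\le c\,2^{j\delta}\|u_j\,|\,L_{\vec p}\|$ with $\delta:=\sum_{\ell}a_\ell(1/p_\ell-1/r_\ell)$. Coupling this with $\|u_j\,|\,L_{\vec p}\|\le c\,2^{-js}\|u\,|\,F^{s,\vec a}_{\vec p,q}\|$, which follows from $F^{s,\vec a}_{\vec p,q}\hookrightarrow F^{s,\vec a}_{\vec p,\infty}$, leaves the geometric factor $2^{j(\delta-s)}$. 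For (ii) I would take $r_\ell:=\max(1,p_\ell)$, so that H\"older on a ball gives $L_{\vec r}\hookrightarrow\lloc$ locally while $\delta=\sum_\ell(a_\ell/p_\ell-a_\ell)_+$; the hypothesis $s>\delta$ then makes $\sum_j\int_{|x|\le R}|u_j|\,dx\le c(R)\sum_j 2^{j(\delta-s)}<\infty$, so that $u=\sum_j u_j$ lies in $\lloc(\Rn)$. For (iii) I would choose $\vec r=(\infty,\dots,\infty)$, giving $\delta=\sum_\ell a_\ell/p_\ell$ and $\sum_j\|u_j\,|\,L_\infty\|\le c\sum_j 2^{j(\delta-s)}<\infty$; since each $u_j$ is continuous by Paley--Wiener, the series converges uniformly to an element of $C_{\operatorname{b}}(\Rn)$ coinciding with $u$ in $\cs'(\Rn)$.

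The genuinely delicate point is (i): convolution does not interchange with the inner $\ell_q$-sum, and for $\vec p,q<1$ the Hardy--Littlewood maximal operator is unbounded on $L_{\vec p}(\ell_q)$, so no crude Young-type estimate suffices. The decisive manoeuvre is therefore to dominate $K_j*u_j$ pointwise by the Peetre maximal function and then to invoke its $L_{\vec p}(\ell_q)$-boundedness from Theorem~\ref{maxmod}; making $N$ large relative to $\vec p$, $q$ and $\vec a$ is exactly what powers this step. Parts (ii) and (iii) are comparatively routine once the Nikol'ski\u\i--Plancherel--Polya inequality and the optimal $\vec r$ are at hand, the only subtlety being that the stated smoothness thresholds are precisely those forcing $\delta<s$.
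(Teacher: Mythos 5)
Your proof is correct and takes essentially the same route as the paper, whose own proof consists of citations: your part (i) is precisely the ``standard technique'' behind the paper's reference to \cite{JoSi08} (domination of the localised blocks by the Peetre maximal function, then its $L_{\vec p}(\ell_q)$-boundedness), and your parts (ii)--(iii) spell out block by block the Nikol'skij-inequality arguments behind the paper's appeal to \cite{JoSi07} and \cite{JoSi08}. Two small points: the inequality $\|\,\{2^{sj}\varphi_j^* u\}_{j=0}^\infty\,|\,L_{\vec p}(\ell_q)\|\le c\,\|\,u\,|\,F^{s,\vec a}_{\vec p,q}\|$ that you need in (i) is Theorem~\ref{inverse} (applied with $\psi_j=\cfi\Phi_j$, exactly as the paper itself does in Step~4 of the proof of Theorem~\ref{coo1}) rather than Theorem~\ref{maxmod}, and (i) can alternatively be read off directly from Proposition~\ref{Fmult-prop} with $\lambda(\xi)=\xi^\alpha$ and $r=\vec a\cdot\alpha$.
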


\begin{proof}
For part (i) the reader is referred to \cite[Lem.~3.22]{JoSi08}, where a proof using
standard techniques for $F^{s,\vec a}_{\vec p,q}$ is indicated (though the
reference should have been to Proposition 3.13 instead of 3.14 there).

Part (ii) is obtained from the Nikol'skij inequality,
cf.~\cite[Cor.~3.8]{JoSi07}, which allows a reduction to the
case in which $p_j\geq 1$ for $j=1,\ldots,n$, while $s>0$; then the
claim follows from the embedding $F^{0,\vec{a}}_{\vec{p},1}\,
\hookrightarrow \lloc$.
Part (iii) follows at once from \cite[(3.20)]{JoSi08}.
\end{proof}

A local maximisation over a ball can be estimated in $\lpvec$, at
least for functions in certain subspaces of $C_{\operatorname{b}}(\Rn)$;
cf.\ Lemma~\ref{derivative}(iii): 

\begin{lem}[\cite{JoHaSi12}]\label{lem:lpLTEstimate}
When $C>0$ and $s>\sum_{l=1}^n \frac{a_l}{\min(p_1,\ldots,p_l)}$, then
\begin{equation}
\label{eq:supBall}
\Big\|\, \sup_{|x-y| < C} |u(y)| \, \Big| L_{\vec{p}}(\R^n_x)\Big\|\le c \,
\|\, u \, | F^{s,\vec{a}}_{\vec{p},q}\|.
\end{equation}
\end{lem}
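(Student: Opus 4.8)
The plan is to bound the local supremum $\sup_{|x-y|<C}|u(y)|$ pointwise by a weighted combination of the Peetre maximal functions of the Littlewood--Paley pieces $u_j$, and then to sum the resulting sequence in $L_{\vec p}(\ell_q)$. First I would reduce to the smooth Schwartz case: since $s>\sum_{l}\tfrac{a_l}{\min(p_1,\dots,p_l)}\ge \sum_l \tfrac{a_l}{p_l}$, Lemma~\ref{derivative}(iii) gives $F^{s,\vec a}_{\vec p,q}\hookrightarrow C_{\operatorname{b}}$, so $u$ is genuinely a bounded continuous function, the pointwise supremum makes sense, and by density of $\cs$ (for $q<\infty$) it suffices to argue for $u\in\cs$ and pass to the limit. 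Writing $u=\sum_{j=0}^\infty u_j$ with $u_j$ as in \eqref{eq:defuj}, each $u_j$ has Fourier support in an anisotropic annulus of scale $2^j$, which is the setting where a maximal-function estimate is available.

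The key step is the pointwise bound. For each $j$, I would control $\sup_{|x-y|<C}|u_j(y)|$ by the Peetre--Fefferman--Stein maximal function $u_j^*$; the band-limitedness of $u_j$ lets one absorb a translation of size $C$ at scale $2^j$ into the maximal function at the cost of a harmless factor, roughly $\sup_{|x-y|<C}|u_j(y)|\le c\,(1+2^jC)^N u_j^*(x)$ for a suitable exponent $N$ tied to the Peetre weight. Summing over $j$ with the triangle inequality (in the $d$-th power quasi-norm if necessary) reduces \eqref{eq:supBall} to estimating $\big\|\{u_j^*\}_j\,\big|L_{\vec p}(\ell_q)\big\|$, possibly after redistributing the growth factor $(1+2^jC)^N$. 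Here I would appeal to the maximal inequality of Peetre--Fefferman--Stein type recalled in the introduction (to be stated fully as Theorem~\ref{maxmod}), which bounds $\|\{2^{js}\psi_j^*f\}\,|L_{\vec p}(\ell_q)\|$ by $\|\{2^{js}\varphi_j^*f\}\,|L_{\vec p}(\ell_q)\|$ and thus by $\|u\,|F^{s,\vec a}_{\vec p,q}\|$, once the moment and Tauberian conditions are arranged.

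The main obstacle I anticipate is the interplay between the local supremum radius $C$ (isotropic, Euclidean) and the anisotropic scaling $2^{j\vec a}$ governing the frequency supports: the factor picked up when moving from $u_j^*(x)$ to the supremum over the Euclidean ball must be controlled uniformly in $j$ so that the $\ell_q$-sum converges, and this is precisely where the strengthened smoothness hypothesis $s>\sum_{l=1}^n \tfrac{a_l}{\min(p_1,\dots,p_l)}$ enters. The successive minima $\min(p_1,\dots,p_l)$ reflect the iterated, non-symmetric structure of the mixed norm $L_{\vec p}$: when applying a Hardy--Littlewood maximal bound or a Peetre-type weight coordinate by coordinate, the worst integrability exponent seen so far dictates how large the admissible smoothness must be, which is why the condition is sharper than the plain $\sum_l a_l/p_l$ from the $C_{\operatorname{b}}$-embedding. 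Verifying that the chosen exponent $N$ and the threshold on $s$ are compatible with the mixed-norm maximal inequality is the technical heart; once that bookkeeping is settled, the estimate follows by combining the pointwise bound with Theorem~\ref{maxmod} and the definition of the $F$-quasi-norm.
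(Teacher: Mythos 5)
First, note that this paper contains no proof of Lemma~\ref{lem:lpLTEstimate}: it is imported wholesale from \cite{JoHaSi12}, so the comparison is with the argument behind the cited result. Your skeleton (Littlewood--Paley pieces $u_j$, pointwise control of the local supremum by anisotropic Peetre maximal functions at the cost of a factor $\prod_{l}(1+2^{ja_l}C)^{r_l}\le c\,2^{j\vec a\cdot\vec r}$, then summation in $j$) is the right starting point, but the step you yourself call the technical heart contains a genuine gap that your chosen tools cannot close. To get back from maximal functions to the $F$-quasi-norm you must invoke Theorem~\ref{inverse} (Theorem~\ref{maxmod} only compares maximal functions with other maximal functions), and Theorem~\ref{inverse} requires $1/r_l<\min(q,p_1,\ldots,p_n)$ for \emph{every} $l$. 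Since absorbing the growth factor $2^{j\vec a\cdot\vec r}$ in the $j$-summation forces $s>\vec a\cdot\vec r$, the best threshold reachable along your route is $s>\sum_{l}a_l/\min(q,p_1,\ldots,p_n)$, which is strictly stronger than the stated $s>\sum_{l}a_l/\min(p_1,\ldots,p_l)$ whenever $q$, or some exponent $p_k$ with $k>l$, lies below $\min(p_1,\ldots,p_l)$ (e.g.\ small $q$, or $p_n<p_1$ already for $l=1$). Embedding first into $F^{s,\vec a}_{\vec p,\infty}$ removes $q$ but still leaves the global minimum $\min(p_1,\ldots,p_n)$ in place of the successive minima. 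So, as written, you prove a weaker lemma than the one asserted.

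The way to reach the stated threshold is to avoid any vector-valued ($\ell_q$) maximal inequality and to work band by band. Each $u_j$ is band-limited with spectrum of anisotropic size $2^{j\vec a}$, so the \emph{single-function} Plancherel--Polya/Peetre estimate in mixed norms (established in \cite{JoSi07}) applies: $\|\,u_j^*\,|L_{\vec p}\|\le c\,\|\,u_j\,|L_{\vec p}\|$ with $c$ independent of $j$, and its admissible exponents are precisely $r_l>1/\min(p_1,\ldots,p_l)$. The successive minima arise because this scalar estimate is proved by pushing a one-dimensional maximal bound through the iterated norm: in direction $x_l$ one uses $u_j^*\lesssim (M_l|u_j|^{t})^{1/t}$ together with Minkowski's inequality in the inner variables $x_1,\ldots,x_{l-1}$, which needs $t<\min(p_1,\ldots,p_l)$; the outer exponents $p_{l+1},\ldots,p_n$ and the sum exponent $q$ never intervene. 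Combining this with $\|\,u_j\,|L_{\vec p}\|\le 2^{-js}\,\|\,u\,|B^{s,\vec a}_{\vec p,\infty}\|\le c\,2^{-js}\,\|\,u\,|F^{s,\vec a}_{\vec p,q}\|$ (from $\ell_q\hookrightarrow\ell_\infty$) and the $d$-triangle inequality in $L_{\vec p}$ with $d=\min(1,p_1,\ldots,p_n)$, the resulting series $\sum_j 2^{jd(\vec a\cdot\vec r-s)}$ converges as soon as $s>\vec a\cdot\vec r$, and letting each $r_l$ decrease towards $1/\min(p_1,\ldots,p_l)$ yields exactly the stated hypothesis on $s$, with $q$ playing no role. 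With that substitution --- scalar band-by-band maximal estimate plus geometric series, instead of Theorems~\ref{maxmod}/\ref{inverse} --- your outline goes through.
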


Next we extend a well-known embedding to the mixed-norm setting.
Let $C^\rho_*(\Rn)$ denote the H\"{o}lder 
class of order $\rho>0$, which
by definition consists of all $u\in C^k(\Rn)$ satisfying
\begin{equation}
  \|u\|_\rho := \sum_{|\alpha|\leq k} \sup_{x\in\R^n} |D^\alpha u(x)| + 
  \sum_{|\alpha|=k} \sup_{x-y\in\R^n\setminus\{0\}} |D^\alpha u(x)-D^\alpha u(y)|\, |x-y|^{k-\rho} < \infty,
\end{equation}
whereby $k$ is the integer satisfying $k<\rho\leq k+1$. 

\begin{lem}
\label{holderEmb}
For $\rho>0$ and $s\in\R$ with $s\leq\rho$ there is an embedding
$C^\rho_*(\Rn) \hookrightarrow B^{s,\vec{a}}_{\infty,\infty}(\Rn)$.
\end{lem}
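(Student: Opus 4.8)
The plan is to estimate, for each frequency block $u_j:=\cfi(\Phi_j\cf u)$, the sup-norm $\|\,u_j\,|L_\infty\|$ and then to verify that the sequence $2^{js}\|\,u_j\,|L_\infty\|$ stays bounded in $j$, which is precisely the $B^{s,\vec a}_{\infty,\infty}$-quasi-norm. Since $u\in C^\rho_*(\Rn)$ satisfies $\sup|u|\le\|u\|_\rho$, it lies in $L_\infty\subset\cs'(\Rn)$, so every $u_j$ is well defined. The term $j=0$ is harmless: writing $u_0=(\cfi\psi)*u$ with $\cfi\psi\in\cs(\Rn)\subset L_1(\Rn)$, Young's inequality gives $\|\,u_0\,|L_\infty\|\le\|\,\cfi\psi\,|L_1\|\,\|\,u\,|L_\infty\|\le c\|u\|_\rho$. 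Thus everything reduces to the high-frequency blocks $j\ge1$.

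For $j\ge1$ I would exploit that $\Phi=\psi-\psi(2^{\vec a}\cdot)$ vanishes on a neighbourhood of the origin, so that $\phi:=\cfi\Phi$ has \emph{all} moments equal to zero, $\int_{\Rn}z^\beta\phi(z)\,dz=0$ for every $\beta\in\N_0^n$. Writing $u_j=(\cfi\Phi_j)*u$ and substituting the quasi-homogeneous dilation in the convolution integral yields the clean representation
\[
u_j(x)=\int_{\Rn}\phi(z)\,u(x-2^{-j\vec a}z)\,dz .
\]
Letting $P_x$ denote the order-$k$ Taylor polynomial of $w\mapsto u(x-w)$ at $w=0$ (where $k<\rho\le k+1$), the vanishing moments let me subtract $P_x(2^{-j\vec a}z)$ under the integral at no cost, since it is a polynomial in $z$ of degree $\le k$.

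It then remains to insert the Taylor remainder bound $|u(x-w)-P_x(w)|\le c\|u\|_\rho\,|w|^\rho$, which holds because the $k$-th order derivatives of $u$ are H\"older continuous of order $\rho-k$. The one genuinely anisotropic point is to pass from the quasi-homogeneous dilation back to the Euclidean length: since $\vec a\in[1,\infty[^n$, one has $|2^{-j\vec a}z|\le 2^{-j a_{\min}}|z|$ with $a_{\min}=\min_l a_l\ge1$. Combined with $\int_{\Rn}|\phi(z)|\,|z|^\rho\,dz<\infty$ (as $\phi\in\cs(\Rn)$), this gives $\|\,u_j\,|L_\infty\|\le c\|u\|_\rho\,2^{-j a_{\min}\rho}$, whence $2^{js}\|\,u_j\,|L_\infty\|\le c\|u\|_\rho\,2^{j(s-a_{\min}\rho)}$.

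I expect the main, though modest, obstacle to be exactly this interaction between the isotropic H\"older condition, which is expressed through the Euclidean distance, and the anisotropic Littlewood--Paley scaling $2^{-j\vec a}$; the whole argument hinges on the observation that $a_{\min}\ge1$ upgrades the hypothesis $s\le\rho$ into $s\le a_{\min}\rho$, which forces $2^{j(s-a_{\min}\rho)}\le1$ for all $j\ge0$ and thereby bounds $\sup_j 2^{js}\|\,u_j\,|L_\infty\|\le c\|u\|_\rho$, i.e.\ $C^\rho_*(\Rn)\hookrightarrow B^{s,\vec a}_{\infty,\infty}(\Rn)$.
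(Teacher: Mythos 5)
Your proposal is correct and follows essentially the same route as the paper's proof: vanishing moments of $\cfi\Phi$, subtraction of the order-$k$ Taylor polynomial, the H\"older remainder bound $c\|u\|_\rho|w|^\rho$, and the observation that $\vec a\geq 1$ converts the anisotropic dilation into geometric decay compatible with $s\leq\rho$. The only (harmless) cosmetic differences are that you treat $j=0$ separately via Young's inequality, where the paper simply enlarges the constant, and that you retain the slightly sharper factor $2^{-ja_{\min}\rho}$ where the paper settles for $2^{-j\rho}$.
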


\begin{proof} 
The claim follows by modifying \cite[Prop.~8.6.1]{Hor97} to the anisotropic case, i.e.
\begin{equation}\label{eq:proofHolderEmb}
  \|\, u\, |  B^{s,\vec{a}}_{\infty,\infty}\| = \sup_{j\in \N_0} 2^{s j} \sup_{x\in\R^n} |\cfi
  (\Phi_j \cf u)(x)|  \leq c_\rho \|u\|_\rho .
\end{equation}
The expressions in the Besov norm are for $j\geq 1$ estimated using that 
$\cfi\Phi$ has vanishing moments of arbitrary order,
\begin{equation}
  \cfi (\Phi_j \cf u)(x) 
  = \int \cfi\Phi(y) \Big( u(x-2^{-j\vec{a}}y) - 
  \sum_{|\alpha|\le k}\frac{\partial^\alpha u(x)}{\alpha!} (-2^{-j\vec{a}}y)^\alpha \Big)\, dy.
\end{equation}
Using a Taylor expansion of order $k-1$ with $k\in\N$ chosen such that 
$k < \rho \leq k+1$ (or directly if $k=0$), we get an estimate of the parenthesis by
\begin{equation}
  \begin{split}
 \Big| \sum_{|\alpha|=k} \frac{k}{\alpha!} \big(-&2^{-j\vec{a}}y\big)^{\alpha} 
     \int_0^1 (1-\theta)^{k-1} \left( \partial^\alpha u(x-2^{-j\vec{a}}\theta y) 
     - \partial^\alpha u(x)\right) d\theta\Big|\\
  &\leq \sum_{|\alpha|=k} \frac{k}{\alpha!} \left|2^{-j\vec{a}}y\right|^k 
  \|u\|_\rho \left| 2^{-j\vec{a}}y\right|^{\rho-k} \int_0^1 (1-\theta)^{k-1} d\theta
  \leq c'_\rho \left| 2^{-j\vec{a}}y\right|^\rho \|u\|_\rho.
\end{split}
\end{equation}
Now we obtain, since $\vec{a}\geq 1$,
\begin{equation}
  \sup_{x\in \R^n} |\cfi (\Phi_j \cf u)(x)| \leq c'_\rho\, 2^{-j\rho} \|u\|_\rho \int
  |\cfi\Phi(y)||y|^\rho\, dy    \leq c_\rho\, 2^{-j\rho} \|u\|_\rho.
\end{equation}
This bound can also be used for $j=0$, if $c_\rho$ is large enough, so~\eqref{eq:proofHolderEmb}
holds for $\rho\geq s$. 
\end{proof}

As a tool we also need to know the mapping properties of certain Fourier
multipliers $\lambda(D)u:= \cfi(\lambda(\xi)\hat u(\xi))$. For generality's
sake, we give

\begin{prop} \label{Fmult-prop}
  When $\lambda\in C^\infty(\Rn)$ for some $r\in\R$ has finite seminorms of the form
  \begin{equation}
    C_\alpha(\lambda):=\sup\big\{\,2^{-j(r-\vec a\cdot\alpha)} |D^\alpha\lambda(2^{j\vec a}\xi)|
    \bigm|
    j\in\N_0,\ \frac14\le|\xi|_{\vec a}\le 4\,\big\},
  \qquad \alpha\in\N_0^n,
  \end{equation}
then $\lambda(D)$ is continuous on $\cs'(\Rn)$ and bounded
$F^{s,\vec a}_{\vec p,q}(\Rn)\to F^{s-r,\vec a}_{\vec p,q}(\Rn)$ for
all $s\in\R$, with
operator norm $\|\lambda(D)\|\le c_{\vec p,q}\sum_{|\alpha|\le N_{\vec p,q}}C_\alpha(\lambda)$.
\end{prop}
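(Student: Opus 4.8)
The plan is to localise $\lambda(D)$ to the dyadic frequency blocks of the Littlewood--Paley decomposition, convert each block into a convolution with a rescaled kernel, dominate that convolution pointwise by a Peetre maximal function, and finally invoke the Fefferman--Stein type maximal inequality underlying Theorem~\ref{maxmod}. As a preliminary I would check that the seminorm conditions place $\lambda$ in the multiplier space $\mathcal{O}_M(\Rn)$ of $\cs'(\Rn)$: writing $\xi=2^{j\vec a}\eta$ with $\tfrac14\le|\eta|_{\vec a}\le4$ and $2^j\sim|\xi|_{\vec a}$ whenever $|\xi|_{\vec a}\ge\tfrac14$, the hypothesis gives $|D^\alpha\lambda(\xi)|\le c\,C_\alpha(\lambda)(1+|\xi|_{\vec a})^{|r|}$ there, while on $\{|\xi|_{\vec a}\le\tfrac14\}$ all derivatives of $\lambda$ are bounded by smoothness; since $|\xi|_{\vec a}$ is controlled by $\langle\xi\rangle$ through \eqref{ax-ineq}, each $D^\alpha\lambda$ grows at most polynomially, so $\lambda(D)=\cfi\lambda\cf$ is continuous on $\cs'(\Rn)$.

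For the norm estimate I would fix $\Psi\in C_0^\infty$ with $\Psi\equiv1$ on $\supp\Phi$ and $\supp\Psi\subset\{\tfrac14\le|\xi|_{\vec a}\le4\}$, put $\Psi_j=\Psi(2^{-j\vec a}\cdot)$ for $j\ge1$, and write $v_j:=\cfi(\Phi_j\lambda\cf u)$ for the $j$-th block of $\lambda(D)u$. Since $\cf u_j=\Phi_j\cf u$ and $\Psi_j\equiv1$ on $\supp\Phi_j$, one has $v_j=\cfi(\lambda\cf u_j)=\cfi(\lambda\Psi_j)\ast u_j=:k_j\ast u_j$, so that $\lambda$ only acts within the $j$-th annulus. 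The kernel is controlled by undoing the dilation: with $g_j(\eta):=\lambda(2^{j\vec a}\eta)\Psi(\eta)$ the chain rule and the defining seminorms give $|D^\alpha_\eta g_j|\le c\,2^{jr}\sum_{\beta\le\alpha}C_\beta(\lambda)$ on the fixed annulus $\supp\Psi$, the factor $2^{j\vec a\cdot\alpha}$ cancelling the $2^{-j(r-\vec a\cdot\alpha)}$ in $C_\alpha$; hence $\cfi g_j$ decays rapidly, and $k_j(x)=2^{j(a_1+\dots+a_n)}(\cfi g_j)(2^{j\vec a}x)$.

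The heart of the estimate is the pointwise domination by the Peetre maximal function
\[
  u_j^\ast(x):=\sup_{y\in\Rn}\frac{|u_j(x-y)|}{(1+2^j|y|_{\vec a})^{N}}.
\]
Substituting $w=2^{j\vec a}y$ in $v_j=k_j\ast u_j$ yields $v_j(x)=\int(\cfi g_j)(w)\,u_j(x-2^{-j\vec a}w)\,dw$; estimating $|u_j(x-2^{-j\vec a}w)|\le(1+|w|_{\vec a})^{N}u_j^\ast(x)$ via the quasi-homogeneity $2^j|2^{-j\vec a}w|_{\vec a}=|w|_{\vec a}$, and using the decay of $\cfi g_j$ with a power $M>N+a_1+\dots+a_n$, I would obtain $|v_j(x)|\le c\,2^{jr}\big(\sum_{|\beta|\le N_{\vec p,q}}C_\beta(\lambda)\big)u_j^\ast(x)$. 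Inserting this into the $F^{s-r,\vec a}_{\vec p,q}$-quasinorm, the gain $2^{jr}$ absorbs the shift from $s$ to $s-r$ and leaves $\big\|\{2^{js}u_j^\ast\}_j\,\big|\,L_{\vec p}(\ell_q)\big\|$, which the Peetre maximal inequality for $L_{\vec p}(\ell_q)$ (cf.\ \cite{JoHaSi12}, underlying Theorem~\ref{maxmod}) bounds by $c\,\|u\,|\,F^{s,\vec a}_{\vec p,q}\|$. The block $j=0$ is treated analogously, its kernel $\cfi(\lambda\Psi^{(0)})$ being a fixed rapidly decreasing function whose norms are controlled by the seminorms $C_\alpha(\lambda)$ with $j=0$.

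The main obstacle I anticipate is the interlocking of exponents: the power $N$ in $u_j^\ast$ must first be taken large enough, depending on $\vec p$ and $q$, for the Peetre maximal inequality to hold; this in turn forces the decay power $M$, and hence the number $N_{\vec p,q}$ of derivatives of $\lambda$ that must be controlled, to grow accordingly. Keeping exact track of the anisotropic dilations $2^{j\vec a}$ and of the compensating factor $2^{jr}$ through the rescaling is the delicate point, whereas the rapid decay of $\cfi g_j$ and the single block $j=0$ are routine.
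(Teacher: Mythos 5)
Your argument is correct in its essentials, and it is worth recording how it relates to what the paper actually does: the printed proof of Proposition~\ref{Fmult-prop} consists of exactly your preliminary step (all $D^\alpha\lambda$ have polynomial growth by quasi-homogeneity and \eqref{ax-ineq}, hence $\lambda(D)$ is continuous on $\cs'$) followed by a one-line citation, ``boundedness follows as in the proof of \cite[Prop.~3.15]{JoSi08}, mutatis mutandis''. So the quantitative part you wrote out in full is precisely what the paper outsources, and your route\,---\,the blocks $v_j=\cfi(\Phi_j\lambda\cf u)=\cfi(\lambda\Psi_j)*u_j$, the rescaled kernels $g_j(\eta)=\lambda(2^{j\vec a}\eta)\Psi(\eta)$ with $|D^\alpha g_j|\le c\,2^{jr}\sum_{\beta\le\alpha}C_\beta(\lambda)$ on a fixed annulus, pointwise domination by a Peetre maximal function, and a vector-valued maximal inequality\,---\,is the standard multiplier argument that such a citation stands for. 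Two adjustments make it run verbatim inside this paper: the inequality you invoke at the end is Theorem~\ref{inverse}, not the one ``underlying Theorem~\ref{maxmod}'' (Theorem~\ref{maxmod} compares maximal functions of two \emph{different} kernel systems; Theorem~\ref{inverse} bounds $\psi_j^*f$ by $\psi_j*f$ for the \emph{same} system, which is what you need with $\psi_j=\cfi\Phi_j$, exactly as the paper does in Step~4 of the proof of Theorem~\ref{coo1}); and since Theorem~\ref{inverse} is stated for the product-type maximal function \eqref{eq:maximalPFStype}, you should take $N=\vec a\cdot\vec r$ with $1/r_l<\min(q,p_1,\dots,p_n)$, so that $\prod_{\ell}(1+2^{ja_\ell}|y_\ell|)^{r_\ell}\le(1+2^j|y|_{\vec a})^{N}$ shows your $u_j^\ast$ is dominated by $(\cfi\Phi_j)^*u$. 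This bookkeeping also yields the advertised bound by finitely many seminorms $C_\alpha(\lambda)$, $|\alpha|\le N_{\vec p,q}$.

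There is one genuine soft spot, and it sits in your final sentence about $j=0$. The points $2^{j\vec a}\xi$ with $j\in\N_0$ and $\frac14\le|\xi|_{\vec a}\le4$ fill out only $\{\,\xi\mid|\xi|_{\vec a}\ge\frac14\,\}$, so the seminorms $C_\alpha(\lambda)$ as defined give \emph{no} control of $\lambda$ on $\{\,\xi\mid|\xi|_{\vec a}<\frac14\,\}$; you used exactly this observation (smoothness instead of seminorms near the origin) in your $\cs'$-continuity step, but then the kernel $\cfi(\lambda\Psi^{(0)})$ requires quantitative bounds on $D^\alpha\lambda$ on a neighbourhood of $\supp\Phi_0\supset\{\,\xi\mid|\xi|_{\vec a}\le1\,\}$, and these are \emph{not} ``controlled by the seminorms $C_\alpha(\lambda)$ with $j=0$'', which only see the annulus. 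Indeed the operator-norm bound cannot hold with the seminorms read literally: a bump $\lambda$ supported in $\{\,\xi\mid|\xi|_{\vec a}<\frac14\,\}$ has $C_\alpha(\lambda)=0$ for all $\alpha$, yet $\lambda(D)\ne0$. The statement must therefore be read (as it clearly is intended, and as all the paper's applications $\Lambda_r$, $\Xi^t_{\vec a}$, $(1-\partial_{x_k}^2)^\mu$ permit) with the $j=0$ supremum extended over the full ball $|\xi|_{\vec a}\le4$; under that reading your treatment of the low-frequency block is fine, but it should be stated explicitly rather than dismissed as routine.
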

\begin{proof}
  The quasi-homogeneity of $|\cdot|_{\vec a}$ yields that
  $|D^\alpha\lambda(\xi)|\le cC_\alpha(\lambda)(1+|\xi|_{\vec
    a})^{r-\vec a\cdot\alpha}$, hence every derivative is
  of polynomial growth, cf.\ \eqref{ax-ineq}, so $\lambda(D)$ is a
  well-defined continuous map on $\cs'$.
Boundedness follows as in the proof of \cite[Prop.~3.15]{JoSi08},
mutatis mutandis. In fact, only the last step there needs an
adaptation to the symbol $\lambda(\xi)$, but this is trivial because
finitely many of the constants $C_\alpha(\lambda)$ can enter the estimates.
\end{proof}

\subsection{Lift Operators} \label{lift-ssect}
The invariance under coordinate transformations will be established below using a somewhat
unconventional lift operator $\Lambda_r$, $r\in\R$,
\begin{equation}
  \Lambda_r u =\operatorname{OP}(\lambda_r(\xi))u= \cfi \big( \lambda_r(\xi) \widehat{u}(\xi) \big), 
  \qquad \lambda_r(\xi) = \sum_{k=1}^n (1+ \xi_k^2)^{r/(2a_k)}.  
\end{equation}
To apply Proposition~\ref{Fmult-prop}, we derive an
estimate uniformly in $j\in\N_0$ and over the set $\frac14\le|\xi|_{\vec a}\le 4$: 
while the mixed derivatives vanish, the explicit higher order chain rule in Appendix~A yields
\begin{equation}
   |D^{\alpha_l}_{\xi_l}(2^{-jr}\lambda_r(2^{j\vec a}\xi))| \le
  \sum_{k=1}^{\alpha_l} c_k (2^{-2ja_l}+ \xi_l^2)^{\frac{r}{2a_l}-k}
  2^{j(\alpha_la_l-2k a_l)}
  \sum_{\substack{k=n_1+n_2\\ \alpha_l=n_1+2n_2}}(2(2^{ja_l}\xi_l))^{n_1} 2^{n_2}
<\infty.
\label{Lambda-est}
\end{equation}
Indeed, the precise summation range gives $\alpha_l=n_1+2(k-n_1)$,
so the harmless power $2^{n_1+n_2}$ results. (Note that this means that
$|D^\alpha\lambda_r(2^{j\vec a}\xi)|\le C_\alpha 2^{j(r-\vec a\cdot\alpha)}$.)

Now $\lambda_r(\xi)$ has no zeros, and for $\lambda_r(\xi)^{-1}$ 
it is analogous to obtain such estimates uniformly with respect to $j$ of
$D^\alpha(2^{jr}\lambda_r(2^{j\vec{a}}\xi)^{-1})$, using Appendix~A and the above.
So Proposition~\ref{Fmult-prop} gives both that
$\Lambda_r$ is a homeomorphism on $\cs'$
(although $\Lambda_r^{-1}\ne\Lambda_{-r}$) and the proof of

\begin{lem}\label{lift}
The map $\Lambda_r$ is a linear homeomorphism 
$F^{s,\vec a}_{\vec p,q}(\Rn) \to F^{s-r ,\vec{a}}_{\vec{p},q} (\Rn)$ for $s\in\R$. 
\end{lem}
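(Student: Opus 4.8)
The plan is to realise both $\Lambda_r$ and its inverse as bounded operators through Proposition~\ref{Fmult-prop}, so that the homeomorphism property reduces to two applications of that multiplier result with opposite smoothness shifts. First I would verify that the symbol $\lambda_r$ meets the hypothesis of Proposition~\ref{Fmult-prop} with shift $r$: the parenthetical consequence of \eqref{Lambda-est} already records that $|D^\alpha\lambda_r(2^{j\vec a}\xi)|\le C_\alpha\,2^{j(r-\vec a\cdot\alpha)}$ uniformly for $j\in\N_0$ and $\tfrac14\le|\xi|_{\vec a}\le4$, which is exactly the finiteness of the seminorms $C_\alpha(\lambda_r)$ with parameter $r$. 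Hence Proposition~\ref{Fmult-prop} gives at once that $\Lambda_r=\operatorname{OP}(\lambda_r)$ is bounded $F^{s,\vec a}_{\vec p,q}(\Rn)\to F^{s-r,\vec a}_{\vec p,q}(\Rn)$ for every $s\in\R$.

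For bijectivity the key point is that $\lambda_r$ is strictly positive, each summand $(1+\xi_k^2)^{r/(2a_k)}$ being positive, so $\lambda_r^{-1}\in C^\infty(\Rn)$ is well defined; since Fourier multipliers compose by multiplication of symbols and $\lambda_r\cdot\lambda_r^{-1}=1$, the operator $\operatorname{OP}(\lambda_r^{-1})$ is a genuine two-sided inverse of $\Lambda_r$ on $\cs'(\Rn)$. It therefore remains to check that $\lambda_r^{-1}$ satisfies the seminorm bounds of Proposition~\ref{Fmult-prop} now with shift $-r$, that is $|D^\alpha(\lambda_r^{-1})(2^{j\vec a}\xi)|\le C_\alpha'\,2^{j(-r-\vec a\cdot\alpha)}$ on the same annulus. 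I would derive this from the higher-order chain rule of Appendix~A applied to $t\mapsto t^{-1}$ with $t=\lambda_r$, combined with the derivative bounds for $\lambda_r$ already in hand and the lower bound $\lambda_r(2^{j\vec a}\xi)\ge c\,2^{jr}$, which holds on $\tfrac14\le|\xi|_{\vec a}\le4$ because at least one $|\xi_k|$ is bounded away from $0$ there by \eqref{ax-ineq}. Feeding these estimates into Proposition~\ref{Fmult-prop} yields that $\operatorname{OP}(\lambda_r^{-1})$ is bounded $F^{s-r,\vec a}_{\vec p,q}(\Rn)\to F^{s,\vec a}_{\vec p,q}(\Rn)$, and together with the boundedness of $\Lambda_r$ this makes $\Lambda_r$ a linear homeomorphism between the two spaces.

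The only genuine work lies in the seminorm estimate for the inverse symbol. The subtlety, flagged in the text by the remark $\Lambda_r^{-1}\neq\Lambda_{-r}$, is that $\big(\sum_k(1+\xi_k^2)^{r/(2a_k)}\big)^{-1}\neq\sum_k(1+\xi_k^2)^{-r/(2a_k)}$, so one cannot simply reuse the estimate for $\lambda_{-r}$; the quotient must be differentiated directly. The expected obstacle is thus the anisotropic bookkeeping: one must track how each differentiation produces a factor of $\lambda_r^{-1}$ together with derivatives of $\lambda_r$, and verify that under the quasi-homogeneous scaling $2^{j\vec a}$ the accumulated powers of $2^{jr}$ combine with the $\vec a\cdot\alpha$ contributed per derivative to the exact target power $2^{j(-r-\vec a\cdot\alpha)}$. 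This is the same type of computation already carried out for $\lambda_r$ in \eqref{Lambda-est}, so it goes through mutatis mutandis.
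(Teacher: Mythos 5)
Your proposal is correct and takes essentially the same route as the paper: both $\lambda_r$ and its reciprocal are fed into Proposition~\ref{Fmult-prop}, with the derivative bounds for $\lambda_r^{-1}$ obtained from the higher-order chain rule of Appendix~A, the bounds already proved for $\lambda_r$, and the positivity of the symbol. One small repair: your justification of the lower bound $\lambda_r(2^{j\vec a}\xi)\ge c\,2^{jr}$ (some $|\xi_k|$ bounded away from $0$) is the right argument for $r>0$, but for $r\le 0$ one instead uses that every $|\xi_k|$ is bounded above on the annulus $\tfrac14\le|\xi|_{\vec a}\le 4$, again by \eqref{ax-ineq}, after which the bound holds for all $r\in\R$.
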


In a similar way one also finds the next auxiliary result.

\begin{lem}\label{lem:lift}
For any $\mu\in\R$, $k\in\{1,\ldots,n\}$ the map 
$(1-\partial_{x_k}^2)^\mu u=\operatorname{OP}((1+\xi_k^2)^{\mu})u$ is a linear homeomorphism 
$F^{s,\vec a}_{\vec p,q}(\Rn)\to F^{s-2\mu a_k,\vec a}_{\vec p,q}(\Rn)$ for all $s\in\R$.
\end{lem}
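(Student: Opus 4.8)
The plan is to realise $(1-\partial_{x_k}^2)^\mu$ as the Fourier multiplier $\operatorname{OP}(\lambda)$ with symbol $\lambda(\xi)=(1+\xi_k^2)^\mu$ and to invoke Proposition~\ref{Fmult-prop} with $r=2\mu a_k$, exactly as Lemma~\ref{lift} was deduced from the estimate~\eqref{Lambda-est}. First I would record that $\lambda$ depends on the single variable $\xi_k$, so that $D^\alpha\lambda\equiv0$ unless $\alpha=\alpha_k e_k$ is a pure $\xi_k$-derivative. Thus only the seminorms $C_{\alpha_k e_k}(\lambda)$ have to be controlled, and the mixed derivatives that complicate the general multiplier situation are simply absent here.

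For these it suffices to bound $D_{\xi_k}^{\alpha_k}\bigl(2^{-2j\mu a_k}\lambda(2^{j\vec a}\xi)\bigr)$ uniformly in $j\in\N_0$ and over $\tfrac14\le|\xi|_{\vec a}\le4$, since this expression coincides with $2^{-j(r-\vec a\cdot\alpha)}(D^\alpha\lambda)(2^{j\vec a}\xi)$. The scaling collapses nicely, because
\[
  2^{-2j\mu a_k}\lambda(2^{j\vec a}\xi)=2^{-2j\mu a_k}\bigl(1+2^{2ja_k}\xi_k^2\bigr)^\mu=\bigl(2^{-2ja_k}+\xi_k^2\bigr)^\mu ,
\]
so one is reduced to differentiating $(b+\xi_k^2)^\mu$ with $b=2^{-2ja_k}\in(0,1]$. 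I would expand this by the explicit higher-order chain rule of Appendix~A, precisely as in~\eqref{Lambda-est} with the exponent $r/(2a_l)$ replaced by $\mu$; this produces a finite sum of terms $c_{n_1,n_2}(2^{-2ja_k}+\xi_k^2)^{\mu-n_1-n_2}(2\xi_k)^{n_1}2^{n_2}$ subject to $n_1+2n_2=\alpha_k$, which on the set $|\xi_k|\le4^{a_k}$ are seen to be bounded uniformly in $j$, just as there. Hence every $C_{\alpha_k e_k}(\lambda)$ is finite and Proposition~\ref{Fmult-prop} yields that $(1-\partial_{x_k}^2)^\mu$ is bounded $F^{s,\vec a}_{\vec p,q}(\Rn)\to F^{s-2\mu a_k,\vec a}_{\vec p,q}(\Rn)$ for every $s\in\R$.

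To upgrade boundedness to the homeomorphism property I would use that $\lambda(\xi)=(1+\xi_k^2)^\mu$ has no zeros, so its reciprocal $\lambda(\xi)^{-1}=(1+\xi_k^2)^{-\mu}$ is again smooth and is exactly the symbol of $(1-\partial_{x_k}^2)^{-\mu}$. Running the same chain-rule estimate with $\mu$ replaced by $-\mu$ shows that the reciprocal symbol meets the hypotheses of Proposition~\ref{Fmult-prop} with order $-2\mu a_k$, so $\operatorname{OP}(\lambda^{-1})$ is bounded $F^{s-2\mu a_k,\vec a}_{\vec p,q}\to F^{s,\vec a}_{\vec p,q}$ and continuous on $\cs'$. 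Since the two symbols multiply to $1$, the operators are mutually inverse on $\cs'$, and therefore on the scale; this gives the asserted linear homeomorphism, in complete analogy with the treatment of $\Lambda_r$ and $\Lambda_r^{-1}$ preceding Lemma~\ref{lift}.

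I expect the genuine work to sit in the uniform-in-$j$ estimate of the $\xi_k$-derivatives, i.e.\ in the finiteness of the $C_{\alpha_k e_k}(\lambda)$: the delicate region is where $\xi_k$ is small while $|\xi|_{\vec a}$ is kept in $[\tfrac14,4]$ through the remaining coordinates, since there the negative powers $(2^{-2ja_k}+\xi_k^2)^{\mu-n_1-n_2}$ have to be offset against the vanishing factors $\xi_k^{n_1}$ and the scaling weight. Balancing these contributions is precisely what the Appendix~A identity accomplishes in~\eqref{Lambda-est}, so once that computation is transcribed with the single exponent $\mu$, the remaining steps are routine.
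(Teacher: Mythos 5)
Your proposal is, in expanded form, exactly the argument the paper gestures at (its proof of Lemma~\ref{lem:lift} is the single sentence ``in a similar way'', pointing back to \eqref{Lambda-est} and Proposition~\ref{Fmult-prop}), but the pivotal uniformity claim is false, and this is a genuine gap rather than a technicality. For $n\ge2$ the region $\tfrac14\le|\xi|_{\vec a}\le4$ contains points with $\xi_k=0$, the modulus being carried by the remaining coordinates. At such a point every term of your chain-rule expansion with $n_1=0$ (these occur for each even $\alpha_k$, with $n_2=\alpha_k/2$) equals $c\,(2^{-2ja_k})^{\mu-\alpha_k/2}2^{\alpha_k/2}=c'\,2^{j(\alpha_k-2\mu)a_k}$, which is unbounded in $j$ as soon as $\alpha_k>2\mu$; no cancellation can save this, since the exact value is $\partial^{\alpha_k}_{\xi_k}(b+\xi_k^2)^{\mu}\big|_{\xi_k=0}=\alpha_k!\binom{\mu}{\alpha_k/2}b^{\mu-\alpha_k/2}$ with $b=2^{-2ja_k}$, and the coefficient is nonzero for every non-integer $\mu$. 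Hence $C_{\alpha_k e_k}(\lambda)=\infty$ for all even $\alpha_k>2\mu$ unless $\mu\in\N_0$, so the hypotheses of Proposition~\ref{Fmult-prop} (finiteness at least for $|\alpha|\le N_{\vec p,q}$, a number fixed by $\vec p,q$ and not at your disposal) are violated precisely in the fractional case $0<\mu<1$ for which the lemma was introduced (irrational ratios $a_j/a_k$). Your ``delicate region'' remark points at the right spot, but the hoped-for balancing does not occur: the vanishing factor $\xi_k^{n_1}$ is absent from exactly the worst terms.

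The inverse half is not merely unproven but irreparable. For the reciprocal symbol $(1+\xi_k^2)^{-\mu}$ with $\mu>0$ and order $-2\mu a_k$, already the zeroth seminorm blows up: at $\xi_k=0$ one gets $2^{2j\mu a_k}(1+2^{2ja_k}\xi_k^2)^{-\mu}=2^{2j\mu a_k}\to\infty$. This reflects a real obstruction: $(1-\partial_{x_k}^2)^{-\mu}$ smooths only in the $x_k$-direction, so it cannot map $F^{s-2\mu a_k,\vec a}_{\vec p,q}(\Rn)$ into $F^{s,\vec a}_{\vec p,q}(\Rn)$ when $n\ge2$. Concretely, for $n=2$, $\vec a=(1,1)$, $\vec p=(2,2)$, $q=2$, $\mu=1$, $s=2$, taking $\widehat{f}$ supported where $|\xi_1|\le1$ and $|\xi_2|\sim R$ gives $\|\,(1-\partial_{x_1}^2)^{-1}f\,|H^2\|\sim R^2\,\|\,f\,|L_2\|$, so $(1-\partial_{x_1}^2)\colon H^2\to L_2$ is injective and bounded but not onto. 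Thus only the forward boundedness for $\mu\ge0$ can hold (and that is all Proposition~\ref{prop:invarianceAll} actually uses); establishing it for non-integer $\mu$ requires a multiplier theorem tolerating product-type symbols, e.g.\ of Marcinkiewicz--Lizorkin type as in \cite{Li}, rather than Proposition~\ref{Fmult-prop}. In fairness, the same blow-up sits in the paper's own \eqref{Lambda-est} and hence in its one-line proof of this lemma; the difference is that $\Lambda_r$ is only ever used with $r$ large, where the finitely many seminorms that are really needed (those with $\vec a\cdot\alpha\le r$) are indeed finite, whereas here $0<\mu<1$ is forced, so your transcription of that computation cannot be made to close the argument.
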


A standard choice of an anisotropic lift operator is obtained 
by associating each $\xi\in\Rn$ with $(1,\xi)\in\R^{1+n}$, which is given the weights
$(1,\vec a)$, and by setting
\begin{equation}
  \langle\xi\rangle_{\vec a}=|(1,\xi)|_{(1,\vec a)}.
\end{equation}
This is in $C^\infty$, as $|\cdot|_{(1,\vec a)}$ is so outside the origin. 
(Note the analogy to $\langle\xi\rangle=\sqrt{1+|\xi|^2})$.
Moreover, $\partial^\alpha\langle\xi\rangle_{\vec a}^t$ is for each $t\in\R$ estimated by powers of
$|\xi|$, cf.~\cite[Lem.~1.4]{Y1}. 
Therefore there is a linear homeomorphism $\Xi_{\vec a}^t\colon \cs'\to\cs'$ given by
\begin{equation}
  \Xi_{\vec a}^t u := \operatorname{OP}(\langle\xi\rangle_{\vec a}^t)u=
   \cfi \left( \langle\xi\rangle_{\vec a}^t\, \widehat{u}(\xi) \right),
  \qquad t\in\R.
\end{equation}
In our mixed-norm set-up it is a small exercise to show that it restricts to a homeomorphism
\begin{equation}
  \Xi_{\vec a}^t\colon F^{s,\vec a}_{\vec p,q}(\Rn)\to F^{s-t,\vec a}_{\vec p,q}(\Rn)
  \quad\text{for all $s\in\R$}.
\label{Xi-id}
\end{equation}
Indeed, invoking Proposition~\ref{Fmult-prop}, the task is as in
\eqref{Lambda-est} to show a uniform bound, and using the
elementary properties of $\langle\xi\rangle_{\vec a}$ (cf.\ \cite[Lem.~1.4]{Y1}) one
finds for $t-\vec a\cdot\alpha\geq 0$,
\begin{equation}
  \big|D^\alpha(2^{-jt}\langle 2^{j\vec a}\xi\rangle_{\vec a}^t)\big|=
  2^{j(\vec a\cdot\alpha-t)}\Big| {D^\alpha_\eta\langle\eta\rangle_{\vec
      a}^t}{\bigm|_{\eta=2^{j\vec a}\xi}}\Big| 
\le
  c2^{j(\vec a\cdot\alpha-t)}\langle 2^{j\vec a}\xi\rangle_{\vec a}^{t-\vec a\cdot\alpha}
\le c\langle\xi\rangle^{t-\vec a\cdot\alpha}.
\label{Xi-est}
\end{equation}
When $t-\vec a\cdot\alpha\leq 0$, then  $|\xi|_{\vec a}^{t-\vec a\cdot\alpha}$ is the outcome on
the right-hand side. But the uniformity results in both cases, 
since the estimates pertain to $\frac14\le|\xi|_{\vec a}\le 4$.

We digress to recall that the classical fractional Sobolev space
$H^{s,\vec a}_{\vec p}(\Rn)$, for $s\in\R$ 
and $1<\vec p<\infty$, consists of the $u\in\cs'$ for which $\Xi_{\vec a}^su\in L_{\vec p}(\Rn)$; with 
$\|\,u\,| H^{s,\vec a}_{\vec p}\|:=\|\,\Xi_{\vec a}^su\,| L_{\vec p}\|$. 
If $m_k:=s/a_k\in \N_0$ for all $k$, then $H^{s,\vec a}_{\vec p}$  coincides 
(as shown by Lizorkin~\cite{Li})
with the space $W^{(m_1,\ldots,m_n)}_{\vec p}(\Rn)$ of $u\in\lpvec$ having 
$\partial_{x_k}^{m_k}u$ in $\lpvec$ for all $k$.

This characterisation is valid for $F^{s,\vec a}_{\vec p,2}$ with $1<\vec p<\infty$
in view of the identification
\begin{equation}
  u\in H^{s,\vec a}_{\vec p}(\Rn)  \iff   u\in F^{s,\vec{a}}_{\vec{p},2} (\Rn),
\label{Hsp-id}
\end{equation}
which by use of $\Xi^s$ reduces to the case $L_{\vec p}=F^{0,\vec a}_{\vec p,2}$. 
The latter is a Littlewood-Paley inequality that may be proved
with general methods of harmonic analysis; cf.~\cite[Rem.~3.16]{JoSi08}.

A general reference on mixed-norm Sobolev spaces is 
the classical book of Besov, Ilin and Nikolskii~\cite{BIN79,BIN96}.
Schmeisser and Triebel~\cite{ScTr87} treated $F^{s,\vec a}_{\vec p,q}$ for $n=2$.

\begin{rem}
Traces on hyperplanes were considered for $H^{s,\vec a}_{\vec p}(\Rn)$ by Lizorkin~\cite{Li}
and for $W^{\vec m}_{\vec p}(\Rn)$ by Bugrov~\cite{Bug71}, who raised the problem of traces at
$\{x_j=0\}$ for $j<n$. This was solved by Berkolaiko, who treated traces in
the $F^{s,\vec a}_{\vec p,q}(\Rn)$-scales for $1< \vec p <\infty$ in
e.g.~\cite{Ber85}.
The range $0<\vec p<\infty$ was covered on $\Rn$ for $j=1$ and $j=n$ in \cite{JoSi08}, and
in our forthcoming paper \cite{JoMHSi2} we carry over the trace results to $F^{s,\vec a}_{\vec
  p,q}$-spaces  
over a smooth cylindrical domain $\Omega\times]0,T[$.
\end{rem}

\begin{rem} \label{JS2-rem}
We take the opportunity to correct a minor inaccuracy in~\cite{JoSi08}, where 
a lift operator (also) called $\Lambda_r$  unfortunately was defined to have symbol
$(1+|\xi|_{\vec a}^2)^{r/2}$. However, it is not in $C^\infty(\Rn)$ for $\vec a\ne(1,\dots,1)$;
this can be seen  from the example for $n=2$ with $\vec a=(2,1)$ where~\cite[Ex.~1.1]{Y1} gives
the explicit formula
\begin{equation}
  |\xi|_{\vec a}= 2^{-1/2}\big(\xi_2^2+(\xi_2^4+4\xi_1^2)^{1/2}\big)^{1/2}.
\end{equation}
Here an easy calculation shows that $D_{\xi_1}|\xi|_{\vec a}^2$ is discontinuous along the line
$(\xi_1,0)$, which is inherited by the symbol e.g.\ for $r=2$. The
resulting operator is therefore not defined on all of $\cs'$. However, this is
straightforward to avoid by replacing the lift operator
in~\cite{JoSi08} by the better choice $\Xi^r$  given in \eqref{Xi-id}.
This gives the space $H^{s,\vec a}_{\vec p}(\Rn)$ in \eqref{Hsp-id}.
\end{rem}

\subsection{Paramultiplication}
This section contains a pointwise multiplier assertion for the $F^{s,\vec a}_{\vec p,q}$-scales.
We consider the densely defined product on $\cs'\times\cs'$, introduced in 
\cite[Def.~3.1]{JJ94mlt} and in an isotropic set-up in \cite[Ch.~4]{RS},
\begin{equation}\label{eq:extendedProduct}
u \cdot v := \lim_{j \to \infty} \cfi \left(\psi (2^{-j\vec{a}}\xi)  \cf u (\xi)\right)
\cdot \cfi \left(\psi (2^{-j\vec{a}}\xi)  \cf v (\xi)\right),
\end{equation}
which is considered for those pairs $(u,v)$ in $\cs'\times\cs'$ for which 
the limit on the right-hand side exists in $\cd'$ and is independent of $\psi$.
Here $\psi\in C_0^\infty$ is the function used in the construction of the
Littlewood-Paley decomposition (in principle the independence should be verified for all
$\psi\in C_0^\infty$ equalling 1 near the origin; but this is not a problem here).

To illustrate how this product extends the usual one, and to prepare for an application
below, the following is recalled:

\begin{lem}[\cite{JJ94mlt}]\label{lem:paramultiplication} When $f\in C^\infty(\Rn)$ has derivatives of any order
of polynomial growth, and when $g\in\cs'(\Rn)$ is arbitrary, then the limit in 
\eqref{eq:extendedProduct} exists and equals the usual product $f\cdot g$, as defined on
$C^\infty \times \cd'$.
\end{lem}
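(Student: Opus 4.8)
The plan is to recognise the truncation in \eqref{eq:extendedProduct} as a smoothing by an anisotropic approximate identity, and then to pass to the limit in a duality pairing after splitting off the two $j$-dependent factors. First I would introduce, for $u\in\cs'$, the operator $S_j u:=\cfi(\psi(2^{-j\vec a}\xi)\,\cf u(\xi))$, so that the right-hand side of \eqref{eq:extendedProduct} is exactly $S_j f\cdot S_j g$. Since $\psi\in C_0^\infty$, the symbol $\psi(2^{-j\vec a}\cdot)$ has compact support and $S_j u=\phi_j\ast u$ with $\phi_j:=\cfi(\psi(2^{-j\vec a}\cdot))$; a change of variables gives $\phi_j(x)=2^{j(a_1+\dots+a_n)}\Psi(2^{j\vec a}x)$, where $\Psi:=\cfi\psi\in\cs$, and $\int_{\Rn}\phi_j\,dx=\psi(0)=1$. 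Thus $\{\phi_j\}$ is an (anisotropic) approximate identity, uniformly bounded in $L^1$, and $S_j g$ is a $C^\infty$-function of polynomial growth, so the product $S_j f\cdot S_j g$ and all the pairings below are classical. I would record two standard facts: $S_j g\to g$ in $\cs'$ for every $g\in\cs'$ (because $\psi(2^{-j\vec a}\cdot)\to1$ acts as a convergent multiplier on $\cs$), and $S_j f\to f$ in $C^\infty_{\mathrm{loc}}$, i.e.\ uniformly with all derivatives on compact sets, by the usual mollifier estimates applied to $f\in C^\infty(\Rn)$ with derivatives of polynomial growth. It then suffices to prove, for every $\varphi\in C_0^\infty(\Rn)$, that $\langle S_j f\cdot S_j g,\varphi\rangle\to\langle g,f\varphi\rangle=\langle fg,\varphi\rangle$, the last identity being the definition of the product on $C^\infty\times\cd'$.

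Next I would reduce to a duality pairing. Since $S_j f\cdot S_j g$ is a locally integrable function and $(S_j f)\varphi\in C_0^\infty$, one has $\langle S_j f\cdot S_j g,\varphi\rangle=\langle S_j g,(S_j f)\varphi\rangle$. Writing $(S_j f)\varphi=f\varphi+(S_j f-f)\varphi$, this splits into the sum
\[
  \langle S_j g,\,f\varphi\rangle+\langle S_j g,\,(S_j f-f)\varphi\rangle .
\]
In the first term $f\varphi\in C_0^\infty\subset\cs$ is fixed, so $\langle S_j g,f\varphi\rangle\to\langle g,f\varphi\rangle$ directly from the weak-$\ast$ convergence $S_j g\to g$ in $\cs'$.

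For the second term I would set $\eta_j:=(S_j f-f)\varphi$. All these functions are supported in the fixed compact set $\supp\varphi$, and since $S_j f\to f$ in $C^\infty_{\mathrm{loc}}$ we have $\eta_j\to0$ in $C_0^\infty(\supp\varphi)$, hence in $\cs$. Because the sequence $\{S_j g\}$ converges in $\cs'$, it is equicontinuous on the Fréchet space $\cs$ by the Banach--Steinhaus theorem; consequently $\langle S_j g,\eta_j\rangle\to0$. This is the only genuinely delicate point of the argument, and I expect it to be the main obstacle: both factors in $\langle S_j g,\eta_j\rangle$ depend on $j$, so the weak-$\ast$ convergence of $S_j g$ alone does not license passing to the limit, and it must be combined with the strong convergence $\eta_j\to0$ in $\cs$ through equicontinuity. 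Combining the two terms yields $\langle S_j f\cdot S_j g,\varphi\rangle\to\langle g,f\varphi\rangle$, so the limit in \eqref{eq:extendedProduct} exists in $\cd'$ and equals $fg$. Finally, since the limit has been identified with $fg$, which manifestly does not involve $\psi$, the required independence of the chosen cut-off follows at once.
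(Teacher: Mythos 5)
Your proof is correct. Note that the paper itself contains no proof of this lemma---it is recalled from \cite{JJ94mlt} as a known result---so there is nothing internal to compare against; your argument is the natural, standard one: $S_jg\to g$ weak-$*$ in $\cs'(\Rn)$, $S_jf\to f$ with all derivatives uniformly on compact sets, and Banach--Steinhaus equicontinuity to pass to the limit in the pairing $\langle S_jg,\eta_j\rangle$ where \emph{both} factors depend on $j$ (you are right that this is the one genuinely delicate point). Your closing observation, that identifying the limit as $f\cdot g$ for every admissible $\psi$ yields the required $\psi$-independence for free, is also exactly as it should be.
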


Using this extended product, we introduce the usual space of multipliers
\begin{equation}
  M(F^{s,\vec a}_{\vec p,q}):= \big\{u \in \cs'\, \big|\,  
  u\cdot v \in F^{s,\vec a}_{\vec p,q} \mbox{ for all } v \in F^{s,\vec a}_{\vec p,q}\big\}
\end{equation}
equipped with the induced operator quasi-norm
\begin{equation}
\| \, u \, |M(F^{s,\vec a}_{\vec p,q})\|:= \sup \big\{\| \, u \cdot v\, |F^{s,\vec a}_{\vec p,q} \| \, \big|\,
\| \, v \, |F^{s,\vec a}_{\vec p,q} \| \le 1 \big\} .
\end{equation}

As Lemma~\ref{holderEmb} at once yields
$C^\infty_{L_\infty} \subset \bigcap\limits_{s>0} B^{s,\vec{a}}_{\infty,\infty}$ 
(a well-known result in the isotropic case)
for 
$C^\infty_{L_\infty} := \{\, g\in C^\infty\, |\,\forall\alpha\colon D^\alpha g\in L_\infty\}$,
the next result is in particular valid for  
$u\in C^\infty_{L_\infty}$:

\begin{lem}\label{mult}
Let $s \in \R$ and take $s_1 > s$ such that also
\begin{equation}
  \label{eq:assumptionMult}
s_1 > \sum_{\ell=1}^n \Big(\frac{a_\ell}{\min(1,q,p_1, \ldots , p_\ell)} -a_\ell \Big) -s.
\end{equation}
Then each $u\in B^{s_1, \vec{a}}_{\infty,\infty}$ defines a multiplier of $F^{s,\vec a}_{\vec p,q}$ and
\begin{equation}
\| \, u \, | M(F^{s,\vec a}_{\vec p,q})\| \le c \, 
\| \, u \, | B^{s_1, \vec{a}}_{\infty,\infty}\|.
\end{equation}
\end{lem}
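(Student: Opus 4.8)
The plan is to control the product by Bony's paramultiplication, splitting $u\cdot v$ according to which factor carries the higher frequency. Writing $u=\sum_j u_j$ and $v=\sum_k v_k$ with the pieces from \eqref{eq:defuj} and fixing a cut $N\in\N$, I decompose
\begin{equation*}
u\cdot v=\underbrace{\sum_k\Big(\sum_{j\le k-N}u_j\Big)v_k}_{=:\pi_1}
+\underbrace{\sum_{|j-k|<N}u_j v_k}_{=:\pi_2}
+\underbrace{\sum_j u_j\Big(\sum_{k\le j-N}v_k\Big)}_{=:\pi_3}.
\end{equation*}
The spectral supports are decisive: each summand of $\pi_1$ (resp.\ $\pi_3$) has its Fourier transform supported in an anisotropic annulus $\{c^{-1}2^k\le|\xi|_{\vec a}\le c2^k\}$ (resp.\ around $2^j$), whereas each summand of $\pi_2$ only lies in a ball $\{|\xi|_{\vec a}\le c2^{\max(j,k)}\}$, since two comparable annuli may cancel. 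First I would record that the hypotheses force $s_1>0$: adding $s_1>s$ and $s_1>\sigma_{\vec p,q}-s$, where $\sigma_{\vec p,q}:=\sum_{\ell=1}^n\big(\frac{a_\ell}{\min(1,q,p_1,\ldots,p_\ell)}-a_\ell\big)\ge0$, gives $2s_1>\sigma_{\vec p,q}\ge0$. In particular $\|u_j\|_\infty\le c\,2^{-js_1}\|u\|$ is summable in $j$, where I abbreviate $\|u\|:=\|\,u\,|B^{s_1,\vec a}_{\infty,\infty}\|$.

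For $\pi_1$ and $\pi_3$ I would invoke the annular Nikol'skij--Plancherel--Polya inequality from \cite{JoSi07}: if $\cf g_k$ is supported in $\{c^{-1}2^k\le|\xi|_{\vec a}\le c2^k\}$, then $\|\sum_k g_k\,|F^{s,\vec a}_{\vec p,q}\|\le c\,\|\{2^{ks}g_k\}\,|\lpvec(\ell_q)\|$ for every $s\in\R$. For $\pi_1$ the low-frequency factor $\sum_{j\le k-N}u_j$ is bounded in $L_\infty$ by $c\|u\|$ (here $s_1>0$ enters), so the annular estimate at once yields $\|\pi_1\,|F^{s,\vec a}_{\vec p,q}\|\le c\|u\|\,\|\,v\,|F^{s,\vec a}_{\vec p,q}\|$. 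For $\pi_3$ I would instead use $\|u_j\|_\infty\le c2^{-js_1}\|u\|$ and bound, pointwise, $2^{js}|u_j\sum_{k\le j-N}v_k|\le c\|u\|\sum_{k\le j-N}2^{(j-k)(s-s_1)}2^{ks}|v_k|$; since $s-s_1<0$ the geometric kernel is summable, so a discrete Young (convolution) inequality in the $\ell_q$-index reduces this to $c\|u\|\,\|\{2^{ks}v_k\}\,|\lpvec(\ell_q)\|$, and the annular estimate gives the same bound as for $\pi_1$.

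The crux is $\pi_2$, because its summands are only ball-supported, so the annular inequality fails and one must pay with the sub-exponent $\sigma_{\vec p,q}$. Here I would use the ball version of the Nikol'skij inequality (again from \cite{JoSi07}): for $\cf g_\nu$ supported in $\{|\xi|_{\vec a}\le c2^\nu\}$ and any smoothness $t>\sigma_{\vec p,q}$, one has $\|\sum_\nu g_\nu\,|F^{t,\vec a}_{\vec p,q}\|\le c\,\|\{2^{\nu t}g_\nu\}\,|\lpvec(\ell_q)\|$. The decisive point is to apply this not at smoothness $s$ but at $t:=s+s_1$, which is admissible precisely because \eqref{eq:assumptionMult} says $s+s_1>\sigma_{\vec p,q}$. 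Grouping $\pi_2=\sum_\nu g_\nu$, where $g_\nu$ collects the finitely many products $u_jv_k$ with $|j-k|<N$ and $\max(j,k)=\nu$, the bounds $\|u_j\|_\infty\le c2^{-\nu s_1}\|u\|$ for the relevant $j\approx\nu$ convert the weight $2^{\nu(s+s_1)}$ into $2^{ks}$ acting on the $v$-pieces, so that $2^{\nu(s+s_1)}|g_\nu|\le c\|u\|\sum_{|k-\nu|<N}2^{ks}|v_k|$. Thus $\|\pi_2\,|F^{s+s_1,\vec a}_{\vec p,q}\|\le c\|u\|\,\|\,v\,|F^{s,\vec a}_{\vec p,q}\|$, and the embedding $F^{s+s_1,\vec a}_{\vec p,q}\hookrightarrow F^{s,\vec a}_{\vec p,q}$ (valid since $s_1>0$) brings this down to the desired space.

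Finally I would collect the three contributions by the $d$-subadditivity of the quasi-norm, $d=\min(1,p_1,\ldots,p_n,q)$, to obtain $\|\,u\cdot v\,|F^{s,\vec a}_{\vec p,q}\|\le c\|u\|\,\|\,v\,|F^{s,\vec a}_{\vec p,q}\|$, the asserted multiplier bound. It then remains to check that the series for $\pi_1,\pi_2,\pi_3$ converge (in $\cs'$, indeed in $F^{s,\vec a}_{\vec p,q}$) and that their sum is the extended product \eqref{eq:extendedProduct}, independently of $\psi$; for $u\in C^\infty_{L_\infty}$ this is Lemma~\ref{lem:paramultiplication}, and in general it follows from the uniform estimates just obtained together with a density argument. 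The main obstacle is the $\pi_2$ step: recognising that the correct smoothness to feed into the ball-supported Nikol'skij inequality is $s+s_1$ rather than $s$ is exactly what makes condition \eqref{eq:assumptionMult} both the natural hypothesis and sufficient to close the estimate.
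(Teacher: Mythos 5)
Your proposal is correct and follows essentially the same route as the paper's proof: the identical paraproduct splitting, the corona (annulus) criterion for $\pi_1,\pi_3$, and the ball criterion applied at smoothness $s+s_1$ for the diagonal term $\pi_2$, which is precisely where \eqref{eq:assumptionMult} enters in the paper as well. The only differences are cosmetic: the paper treats $\Pi_3$ via an intermediate smoothness $t$ with a case split $s>0$ versus $s\le 0$ and a summation lemma of Yamazaki, where you apply the equivalent discrete Young inequality directly at level $s$; and for the independence of the product of $\psi$ your closing density argument needs, as in the paper, the extra embedding $F^{s,\vec a}_{\vec p,\infty}\hookrightarrow F^{s-\varepsilon,\vec a}_{\vec p,1}$ when $q=\infty$, since $\cs$ is not dense in that case.
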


\begin{proof}
The proof will be brief as it is based on standard arguments from paramultiplication,
cf.~\cite{JJ94mlt} and \cite[Ch.~4]{RS} for details.
In particular we shall use the decomposition 
\begin{equation}
  u\cdot v = \Pi_1(u,v) + \Pi_2(u,v) + \Pi_3(u,v).
\end{equation}
The exact form of this can also be recalled from the below formulae. In terms of the
Littlewood-Paley partition $1=\sum_{j=0}^\infty \Phi_j(\xi)$ from
Definition~\ref{F-defn},
we set $\Psi_j = \Phi_0 + \dots +\Phi_j$ for $j\geq 1$ and $\Psi_0=\Phi_0$. 
These are used in Fourier 
multipliers, now written with upper indices as $u^j =\cf^{-1}(\Psi_j \widehat{u})$.

Note first that $s_1>0$, whence $ B^{s_1,\vec{a}}_{\infty,\infty} \hookrightarrow L_\infty$,
which is useful since the dyadic corona criterion for $F^{s,\vec a}_{\vec p,q}$,
cf.~\cite[Lem.~3.20]{JoSi08}, implies the well-known simple estimate 
\begin{equation}
\| \, \Pi_1 (u,v)\, | F^{s,\vec a}_{\vec p,q} \| \le c \, \| \, u \, | L_\infty \|
\,  \| \, v \, | F^{s,\vec a}_{\vec p,q} \|.
\end{equation}
Furthermore, since 
\begin{equation}
s_2:= s_1 + s > \sum_{\ell=1}^n \frac{a_\ell}{\min(1,q,p_1, \ldots\, , p_\ell)} - |\vec{a}|,
\end{equation}
using the dyadic ball criterion for $F^{s,\vec a}_{\vec p,q}$, cf.~\cite[Lem.~3.19]{JoSi08},
we find that
\begin{equation}
  \begin{split}
\| \, \Pi_2 (u,v)\, | F^{s_2,\vec a}_{\vec p,q}\|
& \le  c \, \| \, 2^{js_2} u_j v_j \, 
| L_{\vec{p}}(\ell_q) \|
\\
& \le   c \, \sup_{k\in\N_0} 2^{ks_1} \|\, u_k\, | L_\infty\|\, 
\| 2^{js}|v_j| \, | L_{\vec{p}}(\ell_q) \|
\\
& \le   c \, \| \, u\, | B^{s_1, \vec{a}}_{\infty,\infty}\|
\,  \| \, v \, | F^{s, \vec{a}}_{\vec{p}, q} \|.
  \end{split}  
\end{equation}
To estimate $\Pi_3(u,v)$ we first consider the case $s>0$ and pick $t\in\, ]s,s_1[\,$. The
dyadic corona criterion together with the formula $v^j = v_0+\dots +
v_j$ and a summation lemma, which exploits that $t-s_1<0$ (cf.~\cite[Lem.~3.8]{Y1}), give
\begin{gather}
\begin{split}
\label{eq:pi3estimate}
\| \, \Pi_3(u,v)\, |F^{t,\vec{a}}_{\vec{p},q}\|
&\leq c \sup_{k\in\N_0} 2^{ks_1} \|\, u_k\, |L_\infty\|\, \|2^{(t-s_1)j} v^{j-2}\, | L_{\vec{p}}(\ell_q)\|\\
&\leq c\, \|\, u\, | B^{s_1,\vec{a}}_{\infty,\infty}\|\, \Big\|\, 2^{(t-s_1)j} \sum_{k=0}^j |v_k|\, \Big| L_{\vec{p}}(\ell_q)\Big\|\\
&\leq    c\, \|\, u\, | B^{s_1,\vec{a}}_{\infty,\infty}\|\, \|\, v\, | F^{t-s_1,\vec{a}}_{\vec{p},q}\|.
\end{split}
\end{gather}
Since $t-s_1<0<s$ implies $F^{s,\vec a}_{\vec p,q}\hookrightarrow F^{t-s_1,\vec{a}}_{\vec{p},q}$, and also
$F^{t,\vec{a}}_{\vec{p},q}\hookrightarrow F^{s,\vec a}_{\vec p,q}$ holds, the above yields
\begin{alignat}{1}
  \label{eq:pi3estimate2}
  \|\, \Pi_3(u,v)\, | F^{s,\vec a}_{\vec p,q}\| \leq c\, \|\, u\, | B^{s_1,\vec{a}}_{\infty,\infty}\|\, \|\, v\, | F^{s,\vec a}_{\vec p,q}\|.
\end{alignat}
For $s\leq 0$ the procedure is analogous, except that \eqref{eq:pi3estimate} is derived for
$t\in\, ]0,s_1+s[\,$, which is non-empty by assumption \eqref{eq:assumptionMult} on $s$; then
standard embeddings again give \eqref{eq:pi3estimate2}.

In closing, we remark that as required the product $u\cdot v$ is independent of the test function 
$\psi$ appearing in the definition. Indeed for $q<\infty$ this follows
from Lemma~\ref{lem:paramultiplication}, which gives the  
coincidence between this product on $\cs'\times \cs$ and the usual one,
hence by density of $\cs$, cf.~\eqref{eq:SFembedding}, and the above
estimates, the map $v\mapsto u\cdot v$  
extends uniquely by continuity to all $g\in F^{s,\vec a}_{\vec p,q}$. For $q=\infty$ the embedding 
$F^{s,\vec{a}}_{\vec{p},\infty}\hookrightarrow F^{s-\varepsilon,\vec{a}}_{\vec{p},1}$ 
for $\varepsilon>0$ yields the independence using the previous case.
\end{proof}

\section{Characterisation by Local Means}
\label{localMeans}

Characterisation of Lizorkin--Triebel spaces $F^s_{p,q}$ by local means is due
to Triebel,~\cite[2.4.6]{T92}, and it was from the outset an important tool in proving
invariance of the scale under diffeomorphisms. An extensive treatment of characterisations of
mixed-norm spaces  $F^{s,\vec a}_{\vec p,q}$ in terms of quasi-norms based on
convolutions, in particular the case of local means, was given
in~\cite{JoHaSi12}, which to a large extent is based on extensions to
mixed norms of inequalities in \cite{Ry}. For the reader's convenience
we recall the needed results. 

Throughout this section we consider a fixed anisotropy $\vec a\geq 1$
with $\underline{a} := \min (a_1, \ldots ,a_n)$ and functions $\psi_0, \psi \in \cs (\Rn)$ that 
fulfil Tauberian conditions in terms of some $\varepsilon >0$ and/or a moment 
condition of order $M_\psi\ge -1$ ($M_\psi = -1$ means that the condition is void),
\begin{align}
\label{con1}
|\cf \psi_0 (\xi) | & >  0 \quad \mbox{on}\quad \big\{\,\xi \, \big|\,  |\xi|_{\vec{a}}< 2 \varepsilon\big\},
\\
\label{con2}
|\cf \psi (\xi) | & > 0 \quad \mbox{on}\quad \{\,\xi \, |\,  
\varepsilon/2<|\xi|_{\vec{a}}< 2 \varepsilon\},\\
\label{moment}
D^\alpha (\cf \psi) (0)& =  0 \quad \mbox{for} \quad |\alpha|\le M_\psi.
\end{align}
Note by~\eqref{ax-ineq} that in case \eqref{con1} is fulfilled for the Euclidean distance, it holds
true also in the anisotropic case, perhaps with a different $\varepsilon$.

We henceforth change notation, from~\eqref{eq:defuj}, to 
\begin{equation}\label{eq:defOfSubscript_j}
  \varphi_j(x) = 2^{j|\vec a|}\varphi(2^{j\vec a}x),\quad \varphi\in\cs,\enskip j\in\N,
\end{equation} 
which gives rise to the sequence $(\psi_j)_{j\in\N_0}$.
The non-linear Peetre-Fefferman-Stein maximal operators induced by $(\psi_j)_{j\in\N_0}$ are for an
arbitrary vector $\vec{r} = (r_1, \ldots  , r_n)>0$ and any $f \in \cs'(\Rn)$ given by (dependence on
$\vec a$ and $\vec r$ is omitted) 
\begin{alignat}{1}
\label{eq:maximalPFStype}
 \psi^*_j f(x)  =  \sup_{y \in \Rn}
 \frac{|\psi_j * f (y)|}{\prod\limits_{\ell=1}^n
(1+ 2^{ja_\ell}|x_\ell - y_\ell |)^{r_\ell}} , \quad x\in \Rn,\enskip j \in\N_0.
\end{alignat}
Later we shall also refer to the trivial estimate
\begin{equation}
  \label{*max-ineq}
  |\psi_j*f(x)|\le \psi_j^* f(x).
\end{equation}

Finally for an index set $\Theta$, we consider $\psi_{\theta,0},\psi_\theta\in\cs(\Rn)$, $\theta\in\Theta$, where the $\psi_\theta$ satisfy~\eqref{moment} for some $M_{\psi_\theta}$ independent of $\theta\in \Theta$, and also $\varphi_0,\varphi\in\cs(\Rn)$ that fulfil \eqref{con1}--\eqref{con2} in terms of an $\varepsilon'>0$.
Setting $\psi_{\theta,j}(x)=2^{j|\vec a|}\psi_\theta(2^{j\vec a}x)$ for $j\in\N$, we can state the first result relating different quasi-norms.

\begin{thm}[\cite{JoHaSi12}]\label{maxmod}
Let $0< \vec{p}< \infty$, $0<q\le \infty$ and $-\infty < s < (M_{\psi_\theta}+1) \underline{a}$. 
For a given $\vec{r}$ in \eqref{eq:maximalPFStype}
and an integer $M\geq -1$ chosen so large that $(M+1)\underline{a}-2\vec{a}\cdot\vec{r} +s> 0$, we assume that
\begin{alignat*}{2}
A  & :=  \sup_{\theta \in \Theta}\,  \max  \| \, D^\alpha \cf \psi_\theta\, | L_\infty\|&\, <\infty ,
\\
B & : =  \sup_{\theta \in \Theta}\,  \max \| \, (1+ |\xi|)^{M+1}\, D^\gamma \cf \psi_\theta (\xi)\, | L_1\|&\, <\infty ,
\\
C & :=  \sup_{\theta \in \Theta}\,  \max \| \, D^\alpha \cf \psi_{\theta,0}\, | L_\infty\|&\, <\infty ,
\\
D  & : =  \sup_{\theta \in \Theta}\,  \max \| \, (1+ |\xi|)^{M+1}\, D^\gamma \cf \psi_{\theta,0} (\xi)\, | L_1\|&\, <\infty ,
\end{alignat*}
where the maxima are over $\alpha$ such that $|\alpha|\leq M_{\psi_\theta}+1$ or $\alpha\leq \lceil\vec r + 2\rceil$, 
respectively over $\gamma$ with $\gamma_j\leq r_j+2$.
Then there exists a constant $c>0$ such that for $f \in \cs' (\Rn)$,
\begin{gather}
\begin{split}
\label{eq-8}
\big\|\, \{2^{sj} 
\sup_{\theta \in \Theta} \psi_{\theta,j}^* f\}_{j=0}^\infty \, \big| L_{\vec{p}}(\ell_q)\big\|
\le  c (A +B + C +D)\, \big\|\, \{2^{sj} \varphi^*_j f\}_{j=0}^\infty \, \big| L_{\vec{p}}(\ell_q)\big\| .
\end{split}
\end{gather}
\end{thm}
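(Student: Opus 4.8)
The plan is to adapt the Calder\'on-reproducing technique of Rychkov~\cite{Ry} to the present anisotropic mixed-norm framework, as carried out in \cite{JoHaSi12}. First I would exploit the Tauberian conditions \eqref{con1}--\eqref{con2} on $\varphi_0,\varphi$ to produce an auxiliary pair $\lambda_0,\lambda\in\cs(\Rn)$ with $\lambda$ having vanishing moments up to order $M$, such that the reproducing identity $f=\sum_{k=0}^\infty \lambda_k*\varphi_k*f$ holds in $\cs'(\Rn)$, where $\lambda_j(x)=2^{j|\vec a|}\lambda(2^{j\vec a}x)$; this is obtained by solving $\sum_k \cf\lambda_k\,\cf\varphi_k\equiv 1$, which is possible because $|\cf\varphi|$ is bounded below on the anisotropic corona $\varepsilon'/2<|\xi|_{\vec a}<2\varepsilon'$. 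Convolving the identity with $\psi_{\theta,j}$ yields $\psi_{\theta,j}*f=\sum_{k=0}^\infty K^\theta_{j,k}*(\varphi_k*f)$ with kernels $K^\theta_{j,k}:=\psi_{\theta,j}*\lambda_k$.

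Second, the core of the argument is an almost-orthogonality estimate for $K^\theta_{j,k}$ that is \emph{uniform} in $\theta$. Using the moment condition \eqref{moment} on $\psi_\theta$ (of order $M_{\psi_\theta}$) when $k\ge j$, and the vanishing moments of $\lambda$ (of order $M$) when $k<j$, together with the quasi-homogeneity of the dilations $2^{j\vec a}$, I would establish a bound of the shape
\[
|K^\theta_{j,k}(x)| \le c\,2^{-|j-k|L}\,2^{\min(j,k)|\vec a|}\,\prod_{\ell=1}^n \big(1+2^{\min(j,k)a_\ell}|x_\ell|\big)^{-r_\ell-2},
\]
where $L>0$ is governed by $(M_{\psi_\theta}+1)\underline a-s$ on one side and by $(M+1)\underline a-2\vec a\cdot\vec r+s$ on the other. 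The constants $A,B,C,D$ enter precisely here: they control the $L_\infty$- and weighted $L_1$-norms of the derivatives $D^\alpha\cf\psi_\theta$, $D^\gamma\cf\psi_{\theta,0}$ (for $|\alpha|\le M_{\psi_\theta}+1$, $\alpha\le\lceil\vec r+2\rceil$, $\gamma_j\le r_j+2$) that appear when one differentiates and integrates the convolution kernels, and taking suprema over $\theta$ renders the whole estimate uniform.

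Third, to pass from the plain convolutions to the Peetre maximal functions, I would use the definition \eqref{eq:maximalPFStype} directly in the form $|\varphi_k*f(z)|\le \varphi_k^* f(x)\prod_\ell(1+2^{ka_\ell}|x_\ell-z_\ell|)^{r_\ell}$, insert it into $\psi_{\theta,j}*f(y)$, and absorb the defining weight $\prod_\ell(1+2^{ja_\ell}|x_\ell-y_\ell|)^{r_\ell}$ of $\psi_{\theta,j}^* f(x)$. The surplus decay $-r_\ell-2$ in the kernel bound is exactly what keeps the remaining $z$-integral convergent despite these two polynomial weights, and it leaves the pointwise estimate
\[
2^{sj}\sup_{\theta\in\Theta}\psi_{\theta,j}^* f(x)\le c(A+B+C+D)\sum_{k=0}^\infty 2^{-|j-k|\delta}\,2^{sk}\varphi_k^* f(x)
\]
for some $\delta>0$. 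Applying first the $\ell_q$-quasi-norm and then the mixed norm $L_{\vec p}$, the summability of $\{2^{-|j-k|\delta}\}_k$ permits a discrete convolution (summation) inequality in $\ell_q$, valid because $\delta>0$, which delivers \eqref{eq-8}.

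I expect the main obstacle to be the second step: producing the decay factor $2^{-|j-k|L}$ \emph{uniformly in $\theta$} with an exponent $L$ large enough to overcome both the smoothness weight $2^{s(j-k)}$ and the loss of order $2^{2\vec a\cdot\vec r}$ incurred when matching the two Peetre weights at the differing scales $j$ and $k$. This is where the interplay between the moment order $M$, the Tauberian parameter $\varepsilon'$, and the constraint $(M+1)\underline a-2\vec a\cdot\vec r+s>0$ becomes delicate, and where the anisotropy forces one to track each coordinate direction $\ell$ separately, since a single isotropic dilation no longer governs all variables simultaneously.
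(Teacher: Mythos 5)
The paper itself does not prove Theorem~\ref{maxmod}: it is quoted from \cite{JoHaSi12}, whose argument is exactly the Rychkov-type scheme you outline (Calder\'on reproducing identity built from the Tauberian conditions \eqref{con1}--\eqref{con2}, almost-orthogonality estimates for the kernels $\psi_{\theta,j}*\lambda_k$ made uniform in $\theta$ via $A,B,C,D$, Peetre-weight matching, then a discrete convolution inequality in $\ell_q$ followed by $L_{\vec p}$), so your proposal is essentially the same approach; in particular your identification of the factor $2$ in $2\vec a\cdot\vec r$ as coming from the two separate weight comparisons at the mismatched scales is correct, and note that no self-referential absorption is needed here since the right-hand side involves $\varphi_k^*f$ rather than $\psi_{\theta,j}^*f$ (the known flaw in \cite{Ry} concerns that other situation). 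One directional slip should be corrected in your second step: the moment condition \eqref{moment} on $\psi_\theta$ produces decay in the regime $k\le j$, where $\psi_{\theta,j}$ sits at the finer scale, and pairs with the hypothesis $s<(M_{\psi_\theta}+1)\underline{a}$, whereas the vanishing moments of $\lambda$ (order $M$) give the decay for $k\ge j$ and pair with $(M+1)\underline{a}-2\vec a\cdot\vec r+s>0$; you state these the other way around, though your subsequent bookkeeping of which constraint governs which side matches the correct pairing, so the slip would correct itself upon carrying out the computation.
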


It is also possible to estimate the maximal function in terms of the
convolution appearing in its numerator:

\begin{thm}[\cite{JoHaSi12}]\label{inverse}
Let $\psi_0,\psi\in\cs(\Rn)$ satisfy the Tauberian conditions \eqref{con1}--\eqref{con2}.
When $0< \vec{p}<\infty$, $0<q\le \infty$, $-\infty < s < \infty$ and 
\begin{equation}
  \frac{1}{r_l} < \min (q,p_1, \ldots, p_n) , \quad l =1,\ldots , n 
\end{equation}
there exists a constant $c>0$ such that for $f\in\cs'(\Rn)$,
\begin{equation}\label{eq-inverse}
\big\|\, \{2^{sj} \psi^*_j f\}_{j=0}^\infty \, \big| L_{\vec{p}}(\ell_q)\big\|
\le c\,                    
\big\|\, \{2^{sj} \psi_j * f\}_{j=0}^\infty \, \big| L_{\vec{p}}(\ell_q)\big\|.
\end{equation}
\end{thm}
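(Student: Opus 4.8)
The plan is to reduce \eqref{eq-inverse} to a \emph{pointwise} estimate of the maximal function $\psi_j^*f$ by a superposition, over all dyadic levels $k$, of anisotropic Hardy--Littlewood maximal functions of the convolutions $\psi_k*f$, and then to pass to the mixed norm by a vector-valued maximal inequality in $L_{\vec{p}}(\ell_q)$. First I would exploit the Tauberian conditions \eqref{con1}--\eqref{con2}: since $\cf\psi_0$ and $\cf\psi$ are bounded away from zero on the relevant anisotropic ball and annulus, a Calder\'on-type construction \`a la Rychkov produces $\lambda_0,\lambda\in\cs(\Rn)$, with $\cf\lambda_0,\cf\lambda$ supported in suitable anisotropic sets and with $\lambda$ carrying as many vanishing moments as desired, such that $\sum_{k=0}^\infty\cf\lambda(2^{-k\vec a}\xi)\,\cf\psi(2^{-k\vec a}\xi)=1$ (with the obvious modification for $k=0$). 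This yields the reproducing identity $f=\sum_{k=0}^\infty\lambda_k*\psi_k*f$ in $\cs'(\Rn)$, whence
\[
  \psi_j*f=\sum_{k=0}^\infty(\psi_j*\lambda_k)*(\psi_k*f),\qquad j\in\N_0 .
\]

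Next I would establish anisotropic estimates for the kernels $K_{j,k}:=\psi_j*\lambda_k$. Using the vanishing moments of $\lambda$ together with the Schwartz decay of $\psi$, and carefully tracking the quasi-homogeneous scaling $2^{j\vec a}$, one should obtain for every $N$ a bound $|K_{j,k}(x)|\le c_N\,2^{-|j-k|N}\,2^{m|\vec a|}\,\Omega(2^{m\vec a}x)$, where $m=\min(j,k)$ and $\Omega\in\cs(\Rn)$ is an anisotropic approximate identity. Inserting this into the display, bounding the numerator of $\psi_j^*f(x)$ and dividing by the Peetre weight $\prod_\ell(1+2^{ja_\ell}|x_\ell-y_\ell|)^{r_\ell}$, the convolution against $\Omega$ at scale $m$ is dominated by the product $M$ of the one-dimensional (directional) Hardy--Littlewood maximal operators applied to $|\psi_k*f|^t$. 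Here $t$ must satisfy $t\,r_\ell>1$ for every $\ell$, i.e.\ $t>\max_\ell r_\ell^{-1}$, so that the weight is integrable in each variable. This is exactly where the hypothesis enters: one can in addition demand $t<\min(q,p_1,\dots,p_n)$ precisely because $1/r_l<\min(q,p_1,\dots,p_n)$ for all $l$. The outcome is a pointwise estimate
\[
  \psi_j^*f(x)\le c\sum_{k=0}^\infty 2^{-|j-k|N'}\big(M(|\psi_k*f|^t)(x)\big)^{1/t},
\]
where the loss $2^{(k-j)_+\,\vec a\cdot\vec r}$ from comparing weights at scales $j$ and $k$ has been absorbed into $N'$.

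It then remains to take norms. Multiplying by $2^{sj}$ and writing $2^{sj}=2^{s(j-k)}2^{sk}$, the factor $2^{sk}$ moves inside $(M(|\cdot|^t))^{1/t}$ by homogeneity, and choosing $N'$ larger than $|s|+\vec a\cdot\vec r$ leaves a summable weight $2^{-|j-k|\kappa}$ with $\kappa>0$. A discrete Young inequality in the index $j$ then bounds the $\ell_q$-quasinorm of the left-hand side by that of $\big(M(|2^{sk}\psi_k*f|^t)\big)^{1/t}$, and finally the Fefferman--Stein vector-valued maximal inequality on $L_{\vec{p}/t}(\ell_{q/t})$ — valid since $p_\ell/t>1$ and $q/t>1$ — together with taking $t$-th roots yields \eqref{eq-inverse}. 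Since $\lambda$ may be given arbitrarily many moments, $N'$ is unrestricted, which is what permits \emph{all} $s\in\R$.

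The main obstacle I anticipate is the uniform kernel estimate for $K_{j,k}$ over \emph{all} pairs $(j,k)$, and in particular the low-frequency interactions $k<j$: here the decay $2^{-|j-k|N}$ is delicate, as it cannot come from moments of $\psi$ (none are assumed) and must be extracted from the construction of $\lambda$ and the correct scale $m=\min(j,k)$, while the anisotropy $\vec a$ is coupled to the vector $\vec r$. This is precisely the point at which Rychkov's original argument was flawed and had to be repaired in \cite{JoHaSi12}. A second, more technical, ingredient is the mixed-norm Fefferman--Stein inequality itself, whose iterated passage of the one-dimensional maximal operators through the mixed $L_{\vec{p}}$-norm is what forces the componentwise condition $1/r_l<\min(q,p_1,\dots,p_n)$ rather than a single scalar bound.
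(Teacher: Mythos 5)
Your overall strategy is the right family of argument: Theorem~\ref{inverse} is stated here without proof, being quoted from \cite{JoHaSi12}, and the proof given there is exactly a mixed-norm, anisotropic version of Rychkov's scheme \cite{Ry} (Calder\'on reproducing formula, kernel estimates, a pointwise bound by maximal operators, then a vector-valued maximal inequality). However, your central kernel estimate is false as stated, and it fails at precisely the point you flag as ``delicate''. You claim $|K_{j,k}(x)|=|\psi_j*\lambda_k(x)|\le c_N 2^{-|j-k|N}2^{m|\vec a|}\Omega(2^{m\vec a}x)$ with $m=\min(j,k)$ for \emph{all} pairs $(j,k)$. For the low-frequency interactions $k<j$ no such decay exists, no matter how $\lambda_0,\lambda$ are constructed: on the Fourier side $\cf(\psi_j*\lambda_k)(\xi)=\cf\psi(2^{-j\vec a}\xi)\,\cf\lambda(2^{-k\vec a}\xi)$, and the theorem imposes no moment conditions on $\psi$ (only the Tauberian conditions \eqref{con1}--\eqref{con2}), so $\cf\psi(0)=\int\psi$ may be nonzero. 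Concretely, take $\psi\in\cs$ with $\cf\psi\equiv 1$ on $\{|\xi|_{\vec a}\le\varepsilon\}$ and $|\cf\psi|>0$ on $\{\varepsilon/2<|\xi|_{\vec a}<2\varepsilon\}$ (essentially the standard Littlewood--Paley low-pass function); since any admissible $\lambda$ has $\cf\lambda$ supported in (a neighbourhood of) the annulus $\{\varepsilon/2\le|\xi|_{\vec a}\le 2\varepsilon\}$, one gets $\cf\psi(2^{-j\vec a}\xi)\equiv 1$ on $\supp \cf\lambda(2^{-k\vec a}\cdot)$ for every $k<j$, whence $\psi_j*\lambda_k=\lambda_k$ exactly: there is no decay in $j-k$ at all. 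So the decay for $k<j$ cannot be ``extracted from the construction of $\lambda$ and the scale $m$''; with a $j$-independent reproducing formula it simply does not exist.

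The proof in \cite{JoHaSi12} (following Rychkov, with his flaw repaired) avoids producing any terms with $k<j$: one dilates the Calder\'on identity by $2^{-j\vec a}$, which gives
\begin{equation*}
  \psi_j*f \;=\; \psi_j*(\lambda_0)_j*(\psi_0)_j*f \;+\; \sum_{k\ge1}\big(\psi_j*\lambda_{j+k}\big)*\big(\psi_{j+k}*f\big),
\end{equation*}
where $(\lambda_0)_j,(\psi_0)_j$ are the scale-$j$ dilations of $\lambda_0,\psi_0$. Now every decay factor $2^{-kN}$ comes from the vanishing moments of $\lambda$ alone, all convolutions on the right live at scales $\ge j$ (which is also why arbitrary $s\in\R$ is attainable), and the price is the extra low-pass term $(\psi_0)_j*f$, which is \emph{not} among the $\psi_k*f$ and must be handled by iterating the identity\,---\,this is exactly the locus of the error in \cite{Ry} pointed out in \cite{Han10} and corrected in \cite{JoHaSi12}. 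A second, smaller omission in your write-up: passing from the linear kernel bound to $\psi_j^*f(x)\le c\sum_k 2^{-\kappa|j-k|}\big(M(|\psi_k*f|^t)(x)\big)^{1/t}$ with $t<1$ is not a direct domination; it requires the sub-mean trick $|\psi_k*f(z)|\le\big(\psi_k^*f(x)\big)^{1-t}|\psi_k*f(z)|^{t}\prod_l(1+2^{ka_l}|x_l-z_l|)^{r_l(1-t)}$ together with an a priori finiteness/bootstrap argument for $\psi_k^*f$ when $f\in\cs'$ (first for large $\vec r$, then self-improvement). Your endgame\,---\,choosing $t$ with $\max_l r_l^{-1}<t<\min(q,p_1,\ldots,p_n)$, discrete Young in $j$, and the iterated directional Fefferman--Stein inequality in $L_{\vec p/t}(\ell_{q/t})$\,---\,is sound and is indeed how the cited proof concludes.
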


As a consequence of Theorems~\ref{maxmod},~\ref{inverse} (the first applied for a trivial index set like $\Theta=\{1\}$),
we obtain the characterisation of $F^{s,\vec a}_{\vec p,q}$-spaces by local means:

\begin{thm}[\cite{JoHaSi12}]\label{local}
Let $k_0,k^0 \in \cs$ such that $\int k_0(x)\, dx \neq 0 \neq \int k^0(x)\, dx$ and
set $k (x)= \Delta^N k^0(x)$ for some $N\in\N$.
When $0 < \vec{p} <\infty$, $0< q \le \infty$, and $-\infty < s < 2N\underline{a}$,
then a distribution $f \in \cs'(\Rn)$ belongs to $F^{s,\vec{a}}_{\vec{p},q}(\Rn)$ 
if and only if (cf.~\eqref{eq:defOfSubscript_j} for the $k_j$)
\begin{equation}\label{eq:charOfF}
\|\, f \, | F^{s,\vec{a}}_{\vec{p},q}\|^* :=
\| \, k_0 * f\, |L_{\vec{p}}\| + \| \{2^{sj} k_j * f\}_{j=1}^\infty \, | L_{\vec{p}}(\ell_q)\|<\infty .
\end{equation}
Furthermore, $\|\, f \, | F^{s,\vec{a}}_{\vec{p},q}\|^*$ is an equivalent
quasi-norm on $F^{s,\vec{a}}_{\vec{p},q}(\Rn)$.
\end{thm}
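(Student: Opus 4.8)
The plan is to prove the equivalence $\|\,f\,|\,F^{s,\vec a}_{\vec p,q}\|^*\approx\|\,f\,|\,F^{s,\vec a}_{\vec p,q}\|$ by two symmetric chains of inequalities; each chain first passes to the Peetre--Fefferman--Stein maximal functions via the trivial estimate~\eqref{*max-ineq}, then invokes Theorem~\ref{maxmod} followed by Theorem~\ref{inverse}, with the roles of the two cut-off systems interchanged between the chains. Both assertions of the theorem then follow at once: since Theorems~\ref{maxmod} and~\ref{inverse} hold for \emph{every} $f\in\cs'(\Rn)$ (with both sides allowed to equal $+\infty$), finiteness of one quasi-norm forces finiteness of the other, giving the stated membership criterion, while the two-sided estimate gives the equivalence of quasi-norms.

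First I would fix the two systems to be compared. The defining system is $\varphi_0:=\cfi\Phi_0$ and $\varphi:=\cfi\Phi$, so that in the notation of~\eqref{eq:defuj} one has $\varphi_j*f=u_j$ and hence $\|\{2^{sj}\varphi_j*f\}_{j=0}^\infty\,|\,L_{\vec{p}}(\ell_q)\|=\|\,f\,|\,F^{s,\vec a}_{\vec p,q}\|$. Because $\cf\varphi=\Phi$ vanishes identically near the origin while $\cf\varphi_0=\Phi_0$ equals $1$ there, this system satisfies the Tauberian conditions~\eqref{con1}--\eqref{con2} and the moment condition~\eqref{moment} of every order. For the local-means system I would record three facts: $\cf k_0(0)=\int k_0\,dx\ne0$ gives~\eqref{con1}; the identity $\cf k(\xi)=(-1)^N|\xi|^{2N}\cf k^0(\xi)$ with $\cf k^0(0)=\int k^0\,dx\ne0$ gives~\eqref{con2} for a suitable $\varepsilon$; and the same factor $|\xi|^{2N}$ forces~\eqref{moment} for $k$ with $M_k=2N-1$. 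This last point is exactly why the admissible range is $s<2N\underline a=(M_k+1)\underline a$.

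Next come the two chains. For $\|\,f\,|\,F^{s,\vec a}_{\vec p,q}\|^*\le c\,\|\,f\,|\,F^{s,\vec a}_{\vec p,q}\|$ I would bound $|k_j*f|\le k_j^*f$ by~\eqref{*max-ineq} and apply Theorem~\ref{maxmod} with the trivial index set $\Theta=\{1\}$, taking the local means $k_0,k$ as the moment (numerator) system — so that the hypothesis $s<(M_k+1)\underline a=2N\underline a$ is precisely the present constraint — and $\varphi_0,\varphi$ as the Tauberian comparison system; this yields $\|\{2^{sj}k_j^*f\}\,|\,L_{\vec{p}}(\ell_q)\|\le c\,\|\{2^{sj}\varphi_j^*f\}\,|\,L_{\vec{p}}(\ell_q)\|$, after which Theorem~\ref{inverse} applied to $\varphi_0,\varphi$ replaces $\varphi_j^*f$ by $\varphi_j*f=u_j$, producing $c\,\|\,f\,|\,F^{s,\vec a}_{\vec p,q}\|$ (the separate term $\|\,k_0*f\,|\,L_{\vec p}\|$ is absorbed into the $j=0$ slot of the $\ell_q$-norm). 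The reverse estimate is entirely symmetric: one bounds $2^{sj}|u_j|=2^{sj}|\varphi_j*f|\le2^{sj}\varphi_j^*f$, applies Theorem~\ref{maxmod} now with $\varphi_0,\varphi$ as the numerator system — whose moment order is infinite, so no restriction on $s$ arises — and the local means $k_0,k$ as the (Tauberian) comparison system, and finishes with Theorem~\ref{inverse} for $k_0,k$ to recover the $k_j*f$.

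The only genuine bookkeeping, and hence the main obstacle to a careful write-up, lies in the compatible choice of the parameters and the verification of the constants $A,B,C,D$ in Theorem~\ref{maxmod}. Since Theorem~\ref{inverse} requires $1/r_l<\min(q,p_1,\ldots,p_n)$, I would first choose each $r_l$ large enough for this, and only then select the integer $M$ so large that $(M+1)\underline a-2\vec a\cdot\vec r+s>0$; as $M$ is unbounded above, both can be met. The finiteness of $A,B,C,D$ is then immediate from $k_0,k,\varphi_0,\varphi\in\cs(\Rn)$, since the relevant $D^\alpha\cf\psi_\theta$ and weighted $D^\gamma\cf\psi_\theta$ lie in $L_\infty$ and $L_1$ respectively. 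Finally, subadditivity of $\|\cdot\|^*$ raised to the power $d=\min(1,p_1,\ldots,p_n,q)$ is inherited from that of the $L_{\vec{p}}(\ell_q)$ quasi-norm, so $\|\cdot\|^*$ is indeed an equivalent quasi-norm on $F^{s,\vec a}_{\vec p,q}(\Rn)$.
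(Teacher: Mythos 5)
Your two-chain architecture is exactly the derivation the paper points to (it states that Theorem~\ref{local} follows from Theorems~\ref{maxmod} and~\ref{inverse}, the former with a trivial index set), and your chain for $\|\,f\,|F^{s,\vec a}_{\vec p,q}\|\le c\,\|f\|^*$ is sound: the Littlewood--Paley system is an admissible numerator system since $\Phi$ vanishes identically near the origin (moments of every order), and $k_0,k$ is an admissible Tauberian comparison system, because $\cf k_0(0)=\int k_0\,dx\ne 0$ and $\cf k(\xi)=(-1)^N|\xi|^{2N}\cf k^0(\xi)\ne 0$ for all small $|\xi|\ne 0$, so \eqref{con1}--\eqref{con2} hold for some small $\varepsilon$ by \eqref{ax-ineq}. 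The genuine gap is in the other chain, for $\|f\|^*\le c\,\|\,f\,|F^{s,\vec a}_{\vec p,q}\|$: there you need $\varphi_0=\cfi\Phi_0$, $\varphi=\cfi\Phi$ to satisfy the Tauberian conditions, both as comparison system in Theorem~\ref{maxmod} and in the application of Theorem~\ref{inverse}. Your justification ($\Phi$ vanishes near $0$, $\Phi_0\equiv 1$ there) gives \eqref{con1} and \eqref{moment}, but says nothing about \eqref{con2} --- and \eqref{con2} in fact \emph{fails} for every $\psi$ admitted in \eqref{unity}: since $\psi\equiv 1$ on $\{|\xi|_{\vec a}\le 1\}$ and $\psi\equiv 0$ on $\{|\xi|_{\vec a}\ge 3/2\}$, the function $\Phi=\psi-\psi(2^{\vec a}\cdot)$ is supported in the annulus $\{1/2\le|\xi|_{\vec a}\le 3/2\}$, of radius ratio $3$, whereas \eqref{con2} demands $|\cf\varphi|>0$ on an annulus $\{\varepsilon/2<|\xi|_{\vec a}<2\varepsilon\}$ of ratio $4$; containment would force $\varepsilon\ge 1$ and $\varepsilon\le 3/4$ simultaneously, which is impossible. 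So neither theorem may be invoked in that chain as you wrote it, and the ``only if'' half of the statement is left unproved.

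The repair is short but is a missing idea rather than a formality: in that chain use as comparison system the ``fattened'' functions $h_0,h\in\cs$ with $\cf h_0=\psi(2^{-2\vec a}\cdot)$ and $\cf h=\psi(2^{-2\vec a}\cdot)-\psi(2^{\vec a}\cdot)$, so that $\cf h(2^{-j\vec a}\xi)=\Phi_j(\xi)+\Phi_{j+1}(\xi)+\Phi_{j+2}(\xi)$. This system equals $1$ on $\{|\xi|_{\vec a}\le 4\}$, respectively on $\{3/4\le|\xi|_{\vec a}\le 4\}$, hence fulfils \eqref{con1}--\eqref{con2} with $\varepsilon=3/2$, and $\cf h$ still vanishes identically near the origin. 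Running your chain with $h$ in place of $\varphi$ then gives, via Theorems~\ref{maxmod} and~\ref{inverse},
\begin{equation*}
  \|f\|^*\le c\,\big\|\{2^{sj}h_j*f\}_{j=0}^\infty\,\big|L_{\vec p}(\ell_q)\big\|,
  \qquad h_j*f=u_j+u_{j+1}+u_{j+2},
\end{equation*}
and the right-hand side is $\le c'\|\,f\,|F^{s,\vec a}_{\vec p,q}\|$ by the $d$-triangle inequality and an index shift. With this substitution your proof closes; the rest of your bookkeeping (choosing $\vec r$ before $M$, finiteness of $A,B,C,D$ for fixed Schwartz systems, absorbing the $k_0$-term, and reading both theorems with possibly infinite right-hand sides to obtain the membership equivalence) is correct. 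Note that the same fattening is tacitly required wherever the paper itself takes the Littlewood--Paley system as Tauberian comparison system, e.g.\ in Step~4 of the proof of Theorem~\ref{coo1}.
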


Application of Theorem~\ref{local} yields a useful result regarding Lizorkin--Triebel spaces
on open subsets, when these are defined by restriction, i.e.

\begin{defn}\label{def:Fsubspace}
Let $U \subset\R^n$ be open. The space  $F^{s,\vec a}_{\vec p,q}(U)$ is defined as the set of all
$u \in \cd'(U )$ such that there exists a distribution $f \in F^{s,\vec a}_{\vec p,q}({\R}^{n})$
satisfying
\begin{equation}\label{eq:distributionRestriction}
 f(\varphi) = u (\varphi) \quad \mbox{for all}\quad \varphi \in 
C_0^\infty (U) .
\end{equation}
We equip $F^{s,\vec a}_{\vec p,q}(U)$ with the quotient quasi-norm 
$\|\, u\, | F^{s,\vec a}_{\vec p,q}(U)\| = \inf_{r_U f=u} \|\,f\, |F^{s,\vec a}_{\vec p,q}(\Rn)\|$;
it is normed if $\vec p,q\geq 1$.
\end{defn} 

In \eqref{eq:distributionRestriction} it is tacitly understood that on the left-hand side $\varphi$
is extended by 0 outside $U$. For this we henceforth use the operator notation $e_U \varphi$.
Likewise $r_U$ denotes restriction to $U$, whereby $u=r_U f$ in \eqref{eq:distributionRestriction}.

The Besov spaces $B^{s,\vec a}_{\vec p,q}(U)$ on $U$ can be defined analogously.
The quotient norms have the well-known advantage that
embeddings and completeness can be transferred directly from the
spaces on $\Rn$. However, the spaces are probably of little interest,
if $\partial U$ does not satisfy some regularity conditions, because
we then expect (as in the isotropic case) that they do not coincide with those defined intrinsically.

\begin{lem}\label{lem:equalityInfnorm}
Let $U\subset \Rn$ be open and $r>0$. When $F^{s,\vec a}_{\vec p,q}(U)$ has the
infimum quasi-norm derived from the local means in Theorem~\ref{local}
fulfilling $\supp k_0,\supp k \subset B(0,r)$, and 
\begin{equation}
   \dist(\supp f,\Rn\setminus U)>2r
\end{equation}
holds for some $f\in F^{s,\vec a}_{\vec p,q}(U)$ with compact support, then
\begin{equation}\label{eq:equalityInfnorm}
  \|\, f\, | F^{s,\vec a}_{\vec p,q}(U)\| = \|\, e_U f\, | F^{s,\vec a}_{\vec p,q}(\Rn)\|.
\end{equation}
In other words, the infimum is attained at $e_U f$ for such $f$.
\end{lem}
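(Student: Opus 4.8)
The plan is to exploit that both quasi-norms in the assertion are the \emph{local-means} quasi-norms $\|\cdot\,|F^{s,\vec a}_{\vec p,q}\|^*$ from Theorem~\ref{local} (the infimum on $U$ being the quotient of exactly this quasi-norm), so that equality can hold on the nose, without the loss of constants inherent in passing to an equivalent quasi-norm. With this understood, $\|\,f\,|F^{s,\vec a}_{\vec p,q}(U)\| = \inf\{\|\,g\,|F^{s,\vec a}_{\vec p,q}\|^* : g\in F^{s,\vec a}_{\vec p,q}(\Rn),\ r_U g = f\}$. Since $\supp f$ is compact and lies well inside $U$, the extension $e_U f$ is a compactly supported distribution with $r_U(e_U f)=f$; hence, once we know $e_U f\in F^{s,\vec a}_{\vec p,q}(\Rn)$, it is admissible in the infimum and delivers the inequality ``$\le$''. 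The whole point is therefore to prove the reverse bound $\|\,e_U f\,|F^{s,\vec a}_{\vec p,q}\|^*\le\|\,g\,|F^{s,\vec a}_{\vec p,q}\|^*$ for \emph{every} competitor $g$; this will simultaneously show $\|\,e_U f\,|F^{s,\vec a}_{\vec p,q}\|^*<\infty$, i.e.\ $e_U f\in F^{s,\vec a}_{\vec p,q}(\Rn)$, since competitors exist by $f\in F^{s,\vec a}_{\vec p,q}(U)$.

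The heart of the matter is a pointwise comparison of the local means, resting on a support/distance computation. First I note that $\supp k_0\subset B(0,r)$ by hypothesis, and that $k_j(x)=2^{j|\vec a|}k(2^{j\vec a}x)$ has $\supp k_j\subset B(0,r)$ for all $j\ge 1$ as well, because $\vec a\ge 1$ makes the dilation $2^{-j\vec a}$ contractive. Writing the local mean as the pairing $k_j * h(x)=\langle h,\,k_j(x-\cdot)\rangle$, its integrand lives on $x-\supp k_j\subset B(x,r)$. Next set $K_r:=\{x : \dist(x,\supp f)\le r\}$. For $x\notin K_r$ the kernel $k_j(x-\cdot)$ is disjoint from $\supp(e_U f)=\supp f$, so $k_j*(e_U f)(x)=0$. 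For $x\in K_r$ I claim $B(x,r)\subset U$: if $w\in B(x,r)$ and $z\in\supp f$ with $|x-z|\le r$, then $|w-z|<2r<\dist(\supp f,\Rn\setminus U)$, so $w$ cannot lie in $\Rn\setminus U$. Consequently $k_j(x-\cdot)\in C_0^\infty(U)$, and since $g$ and $e_U f$ both restrict to $f$ on $U$ (by $r_U g=f=r_U(e_U f)$), the pairings agree: $k_j*(e_U f)(x)=\langle f,k_j(x-\cdot)\rangle=k_j*g(x)$. Combining the two cases yields, for all $j\in\N_0$ and all $x$,
\[
 |k_j*(e_U f)(x)|\le |k_j*g(x)|.
\]

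Finally I would invoke the monotonicity of the mixed-norm functionals: since $L_{\vec p}$ and $L_{\vec p}(\ell_q)$ are built from iterated integrals of absolute values, the pointwise bound above (for the $j=0$ term, and, after forming the $2^{sj}$-weighted $\ell_q$-sum, for $j\ge 1$) passes to the quasi-norms and gives $\|\,e_U f\,|F^{s,\vec a}_{\vec p,q}\|^*\le\|\,g\,|F^{s,\vec a}_{\vec p,q}\|^*$. Taking the infimum over all admissible $g$ shows $\|\,e_U f\,|F^{s,\vec a}_{\vec p,q}\|^*\le\|\,f\,|F^{s,\vec a}_{\vec p,q}(U)\|$, which together with the competitor inequality from the first paragraph proves \eqref{eq:equalityInfnorm}. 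I expect the only delicate point to be the support bookkeeping in the second paragraph --- in particular phrasing the convolutions rigorously as distributional pairings with test functions supported in $B(x,r)$, and checking that the geometric inclusion $B(x,r)\subset U$ really follows from the margin $\dist(\supp f,\Rn\setminus U)>2r$ together with $\supp k_j\subset B(0,r)$; the $j=0$ term must be handled on the same footing, which is why the hypothesis is imposed on both $k_0$ and $k$.
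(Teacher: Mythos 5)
Your proposal is correct, and it rests on the same geometric core as the paper's proof---namely that kernels supported in $B(0,r)$ spread supports by less than $r$, so the $2r$ margin in $\dist(\supp f,\Rn\setminus U)>2r$ keeps the local means of $e_U f$ from ever ``seeing'' anything outside $U$. The organization differs, though, in a way worth noting. The paper takes an arbitrary competing extension $\tilde f$ and decomposes it as $\tilde f=e_Uf+g$, where the difference $g$ is supported in $\Rn\setminus U$, hence at distance $>2r$ from $\supp f$; it then observes that $k_j*e_Uf$ and $k_j*g$ have disjoint supports and, crucially, that $g\neq0$ forces $k_j*g\neq0$ on some nonempty open set for some $j$ (a point which rests on the Tauberian conditions behind Theorem~\ref{local}), so that the extra term strictly increases the quasi-norm: $\|\,\tilde f\,|F^{s,\vec a}_{\vec p,q}\|>\|\,e_Uf\,|F^{s,\vec a}_{\vec p,q}\|$. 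You never form this difference; instead you prove the pointwise domination $|k_j*(e_Uf)(x)|\le|k_j*g(x)|$ directly---equality on the $r$-neighbourhood of $\supp f$, where $k_j(x-\cdot)\in C_0^\infty(U)$ makes the pairings against $e_Uf$ and $g$ agree, and $k_j*(e_Uf)(x)=0$ elsewhere---and then conclude by lattice monotonicity of $L_{\vec p}$ and $L_{\vec p}(\ell_q)$. What each buys: your route is more self-contained, since a non-strict inequality suffices for the infimum to be attained and you never need the (glossed-over) fact that the family $\{k_j\}$ separates nonzero distributions; the paper's route yields the slightly stronger conclusion that every extension other than $e_Uf$ has \emph{strictly} larger quasi-norm, i.e.\ $e_Uf$ is the unique minimizer. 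Your handling of the bootstrap (finiteness of $\|\,e_Uf\,|F^{s,\vec a}_{\vec p,q}\|^*$ from the existence of some competitor, hence membership via Theorem~\ref{local}) is also explicit where the paper leaves it implicit.
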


\begin{proof}
For any other extension $\tilde{f}\in\cs'(\Rn)$ the difference
$g=\tilde f-e_Uf$ is non-zero in $\cs'(\Rn)$ and $\supp e_U f \cap \supp g = \emptyset$.
So by the properties of $r$,
\begin{equation}
  \supp (k_j * e_U f) \cap \supp (k_j * g) = \emptyset, \quad j\in\N_0.
\end{equation}
Since $g\neq 0$ there is some $j$ such that  $\supp(k_j*g)\ne\emptyset$, hence $k_j * g(x)\neq 0$ 
on an open set disjoint from $\supp (k_j *e_U f)$. This term therefore effectively contributes
to the $L_{\vec p}$-norm in~\eqref{eq:charOfF} and thus 
$\|\, \tilde f \, | F^{s,\vec a}_{\vec p,q}\|=\|\, e_U f + g\, | F^{s,\vec a}_{\vec p,q}\| > \|\, e_U f\, | F^{s,\vec a}_{\vec p,q}\|$,
which shows \eqref{eq:equalityInfnorm}.
\end{proof}

\section{Invariance under Diffeomorphisms}
\label{approach1}
The aim of this section is to show that $F^{s,\vec a}_{\vec p,q}(\Rn)$ is invariant
under suitable diffeomorphisms $\sigma:\Rn\to\Rn$ and from this deduce similar 
results in a variety of set-ups.

\subsection{Bounded Diffeomorphisms}
  \label{bDiff-ssect}
A one-to-one mapping $y=\sigma (x)$
of $ \Rn$ onto $\Rn $ is here called a diffeomorphism if the components
$\sigma_j:  \Rn \to \R$ have  classical derivatives $D^\alpha \sigma_j$
for all $\alpha\in \N^n$. We set $\tau(y)=\sigma^{-1}(y)$.

For convenience $\sigma$ is called a \emph{bounded diffeomorphism} when
$\sigma$ and $\tau$ furthermore satisfy
\begin{gather}
\label{det0}
C_{\alpha,\sigma} := \max_{j\in\{1,\ldots,n\}} \| \, D^\alpha \sigma_j\,| L_\infty \| <\infty ,
\\
C_{\alpha,\tau} := \max_{j\in\{1,\ldots,n\}} \| \, D^\alpha \tau_j\,| L_\infty \| <\infty.
\label{det0'}
\end{gather}
In this case there are obviously  positive constants 
(when $J\sigma$ denotes the Jacobian matrix)
\begin{equation}
\label{det}
c_\sigma := \inf_{x\in \Rn} |\det J \sigma (x)| >0 , \qquad
c_\tau := \inf_{y\in \Rn} |\det J \tau (y)| >0 .
\end{equation}
E.g., by the Leibniz formula for determinants, $c_\sigma\ge 1/(n!\prod_{|\alpha|=1}C_{\alpha,\sigma})>0$.

Conversely, whenever a $C^\infty$-map $\sigma\colon\Rn\to\Rn$ fulfils \eqref{det0} and $c_\sigma>0$, 
then $\tau$ is $C^\infty$
(as $J\tau(y) = \frac{1}{\det J\sigma(\tau(y))} \operatorname{Adj}J\sigma(\tau(y))$,
if $\operatorname{Adj}$ denotes the adjugate, each $\partial_j\tau_k$ is in $C^m$ if $\tau$ is so)
and using e.g.\ Appendix~\ref{app:higherOrderCR} it is seen by induction over $|\alpha|$ that it
fulfils \eqref{det0'}.  
Hence $\sigma$ is a bounded diffeomorphism.

Recall that for a bounded diffeomorphism $\sigma$ and a temperate distribution $f$, the composition
$f\circ \sigma $ denotes the temperate distribution given by
\begin{equation}\label{eq:defCompositionSprime}
\langle f\circ\sigma,\psi\rangle = \langle f,\psi \circ \tau |\det J\tau|\rangle 
\quad \mbox{for}\quad \psi\in \cs .
\end{equation}
It is continuous $\cs'\to\cs'$ as the adjoint of the 
continuous map $\psi\mapsto\psi\circ\tau|\det J\tau|$ on $\cs$: since 
$|\det J\tau|$ is in $C_{L_\infty}^\infty$, continuity on $\cs$ can be shown
using the higher-order chain rule to estimate each seminorm $q_{N,\alpha}(\psi\circ\tau)$,
cf.\ \eqref{eq:seminormsS_Nalpha}, by 
$\sum_{|\beta|\le|\alpha|}q_{N,\beta}(\psi)$
(changing variables, $\langle \sigma(\cdot) \rangle$ can be estimated using the Mean Value
Theorem on each $\sigma_j$).

We need a few further conditions, due to the anisotropic situation: one can neither expect 
$f\circ \sigma$ to have the same regularity as $f$,  e.g.\ if $\sigma $
is a rotation; nor that $f\circ \sigma \in L_{\vec{p}}$ when $f\in L_{\vec{p}}$.
On these grounds we first restrict to the situation in which
\begin{equation}\label{eq:conditionsOn_ap}
  a_0:=a_1= a_2 = \ldots  = a_{n-1}, \qquad p_0:= p_1 = \ldots =p_{n-1}
\end{equation}
and
\begin{equation}\label{eq:conditionsOn_sigma}
\sigma (x) = (\sigma'(x_1, \ldots , x_{n-1}),x_n) 
\quad \mbox{for all}\quad x\in \Rn .
\end{equation}

To prepare for Theorem~\ref{coo1} below, which gives sufficient conditions for
the invariance of $F^{s,\vec a}_{\vec p,q}$ 
under bounded diffeomorphisms of the type \eqref{eq:conditionsOn_sigma}, we first show that it
suffices to have invariance for sufficiently large $s$:

\begin{prop}
  \label{prop:invarianceAll}
Let $\sigma$ be a bounded diffeomorphism on $\Rn$
on the form in~\eqref{eq:conditionsOn_sigma}. When \eqref{eq:conditionsOn_ap} holds and
there exists $s_1\in\R$ with the property that $f\mapsto f\circ\sigma$ is a linear
homeomorphism of~$F^{s,\vec a}_{\vec p,q}(\Rn)$ onto itself for every $s>s_1$, 
then this holds true for all $s\in\R$.
\end{prop}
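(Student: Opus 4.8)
The plan is to reduce an arbitrary smoothness index $s$ to one lying above $s_1$ by conjugating the composition operator $C_\sigma\colon f\mapsto f\circ\sigma$ with a lift acting only in the $x_n$-direction. The decisive structural point is that the special form \eqref{eq:conditionsOn_sigma} makes $C_\sigma$ leave the last variable untouched: writing $x=(x',x_n)$ with $x'=(x_1,\dots,x_{n-1})$, one has $(f\circ\sigma)(x',x_n)=f(\sigma'(x'),x_n)$, so that $C_\sigma$ transforms only the first $n-1$ variables. Consequently $C_\sigma$ should commute with every Fourier multiplier in $\xi_n$ alone, and in particular with the one-directional lift $N_\nu:=(1-\partial_{x_n}^2)^\nu$ from Lemma~\ref{lem:lift}, for each $\nu\in\R$:
\begin{equation*}
  N_\nu\, C_\sigma = C_\sigma\, N_\nu\qquad\text{on }\cs'(\Rn).
\end{equation*}

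First I would prove this commutation. On Schwartz functions it is immediate, since $N_\nu$ acts only in the $x_n$-variable while $C_\sigma$ reshuffles only the $x'$-variables, so the two act on disjoint groups of variables; moreover $C_\sigma$ and $N_\nu$ both map $\cs(\Rn)$ into itself (for $C_\sigma$ this uses that $\sigma$ is a bounded diffeomorphism, and for $N_\nu$ that $(1+\xi_n^2)^\nu$ is a smooth, polynomially bounded symbol). The identity would then extend to all of $\cs'(\Rn)$ by the weak-$*$ density of $\cs$ in $\cs'$ together with the weak-$*$ continuity of $C_\sigma$ (noted after \eqref{eq:defCompositionSprime}) and of $N_\nu$. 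Alternatively one may verify it directly from the adjoint formula \eqref{eq:defCompositionSprime}, using that $\tau(y',y_n)=(\tau'(y'),y_n)$ and $|\det J\tau|$ depend on $y'$ only while $N_\nu$ acts in $y_n$ only, together with the fact that the symbol $(1+\xi_n^2)^\nu$ is even in $\xi_n$, so that $N_\nu$ equals its own transpose in the $\cs'$--$\cs$ pairing.

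Next, I would fix an arbitrary $s\in\R$, choose $r>0$ with $s+r>s_1$, and put $\nu:=-r/(2a_n)<0$. By Lemma~\ref{lem:lift} the operator $N_\nu$ is a linear homeomorphism $F^{s,\vec a}_{\vec p,q}(\Rn)\to F^{s+r,\vec a}_{\vec p,q}(\Rn)$, with inverse $N_{-\nu}=(1-\partial_{x_n}^2)^{-\nu}$ the homeomorphism in the reverse direction. Since $s+r>s_1$, the hypothesis provides that $C_\sigma$ is a linear homeomorphism of $F^{s+r,\vec a}_{\vec p,q}(\Rn)$ onto itself. Reading the commutation identity in $\cs'(\Rn)$ as
\begin{equation*}
  C_\sigma=N_{-\nu}\,C_\sigma\,N_\nu
\end{equation*}
and restricting to $F^{s,\vec a}_{\vec p,q}(\Rn)$, one sees $C_\sigma$ there as a composition in which $N_\nu$ maps $F^{s,\vec a}_{\vec p,q}(\Rn)$ homeomorphically onto $F^{s+r,\vec a}_{\vec p,q}(\Rn)$, the middle factor $C_\sigma$ is a homeomorphism of $F^{s+r,\vec a}_{\vec p,q}(\Rn)$ onto itself, and $N_{-\nu}$ maps back homeomorphically onto $F^{s,\vec a}_{\vec p,q}(\Rn)$. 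Hence $C_\sigma$ is a linear homeomorphism of $F^{s,\vec a}_{\vec p,q}(\Rn)$ onto itself, and since $s$ was arbitrary the claim follows for all $s\in\R$.

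The only genuinely technical ingredient is the commutation at the level of temperate distributions; everything else is bookkeeping with the lift of Lemma~\ref{lem:lift}. I therefore expect the main obstacle to be a careful justification of $N_\nu C_\sigma=C_\sigma N_\nu$ on $\cs'(\Rn)$, but the block structure --- $N_\nu$ touching only $\xi_n$ and $C_\sigma$ only $x'$ --- should render it transparent. It is exactly in order to have such a one-directional lift of arbitrary real order available (and to bypass any issue with irrational ratios $a_n/a_j$) that the operators $(1-\partial_{x_k}^2)^\mu$ were introduced; note that the global lifts $\Lambda_r$ or $\Xi^t_{\vec a}$ would be useless here, as they involve all the $\xi_j$ and hence fail to commute with $C_\sigma$.
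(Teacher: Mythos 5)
Your proof is correct, and it takes a genuinely different --- and for the statement on $\Rn$ considerably shorter --- route than the paper's own argument. Your key observation, that the one-directional lift $N_\nu=(1-\partial_{x_n}^2)^\nu$ commutes with the composition operator $C_\sigma\colon f\mapsto f\circ\sigma$ because $\sigma$ leaves $x_n$ untouched, is sound: via the adjoint formula \eqref{eq:defCompositionSprime} and the evenness of the symbol it reduces to the Schwartz-level identity $N_\nu\big(\psi\circ\tau\,|\det J\tau|\big)=(N_\nu\psi)\circ\tau\,|\det J\tau|$, which follows from a partial Fourier transform in $x_n$, since $\tau(y)=(\tau'(y'),y_n)$ forces $\det J\tau=\det J\tau'$ to depend on $y'$ alone. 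Lemma~\ref{lem:lift} with the negative exponent $\nu=-r/(2a_n)$ then exhibits $C_\sigma=N_{-\nu}\,C_\sigma\,N_\nu$ on $F^{s,\vec a}_{\vec p,q}$ as a composition of three homeomorphisms; note that, unlike $\Lambda_r$, the operator $N_\nu$ really does have inverse $N_{-\nu}$, since its symbol is a single power rather than a sum. The paper proceeds quite differently: it writes $f=\Lambda_r h$ with the sum-lift, chooses $r$ so that $d_0=r/(2a_0)$ is an integer, making the $x'$-direction factors $(1-\partial_{x_k}^2)^{d_0}$ genuine differential operators, commutes these past $\sigma$ via the higher-order chain rule --- at the price of variable coefficients $\eta_{k,\beta}\circ\sigma$ that must be controlled by the paramultiplication Lemma~\ref{mult} --- and only the fractional $x_n$-part is handled by Lemma~\ref{lem:lift}. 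What the paper's heavier route buys is locality: the decomposition \eqref{eq:liftAppliedtof}--\eqref{eq:applyingStructureTau} consists of differential operators and pointwise multipliers, so it can be cut off and reused essentially verbatim in Step~2 of Theorem~\ref{thm:localVersion} and in Theorem~\ref{thm:infiniteCylinder}, whereas your conjugation by the non-local operator $N_\nu$ does not interact as simply with restriction to open sets and compact supports. What your route buys is brevity and cleanliness on $\Rn$: no integrality constraint on $r$, no chain rule, and no multiplier estimates.
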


\begin{proof}
It suffices to prove for $s\le s_1$ that
\begin{equation}\label{eq:invarianceAllSufficient}
 \| \,  f\circ \sigma \, |F^{s,\vec{a}}_{\vec{p},q}  \| \le c\, 
 \| \,  f\, |F^{s,\vec{a}}_{\vec{p},q}  \|
\end{equation}
with some constant independent of $f$, as the reverse inequality  
then follows from the fact that the inverse of $\sigma$ is also a
bounded diffeomorphism with the structure in \eqref{eq:conditionsOn_sigma}. 
 
First $r>s_1-s+2a_n$ is chosen such that $d_0:=\frac r{2a_0}$ is a natural
number. Setting $d_n=\frac r{2a_n}$ and  
taking $\mu\in[0,1[\,$ such that $d_n-\mu\in\N$,
we have that $r_\mu := r-2\mu a_n > s_1-s$.

Now Lemma~\ref{lift} yields the existence of $h\in F^{s+r,\vec{a}}_{\vec{p},q}$ such that
$f = \Lambda_r h$, i.e.
\begin{alignat}{1}\label{eq:liftAppliedtof}
f = (1- \partial^2_{x_n})^{d_n-\mu}(1-\partial^2_{x_n})^\mu h 
   + \sum_{k=1}^{n-1}  (1- \partial^2_{x_k})^{d_0} h.
\end{alignat}
Setting $g_1=\big((1-\partial^2_{x_n})^\mu h\big)\circ\sigma$ and $g_0=h\circ\sigma$,
we may apply the higher-order chain rule to e.g.\ $h=g_0\circ\tau$
(using denseness of $\cs$ in $\cs'$ and the $\cs'$-continuity of composition 
in \eqref{eq:defCompositionSprime}, 
Appendix~\ref{app:higherOrderCR} extends to $\cs'$).
Taking into account that $\tau(x) = (\tau'(x'),x_n)$,
and letting prime indicate summation over multi-indices with $\beta_n=0$,
\begin{alignat}{1}
  \label{eq:applyingStructureTau}
f =  \sum_{l = 0}^{d_n-\mu} \eta_{n,l} \, \partial_{x_n}^{2l} g_1\circ\tau +
\sum_{k=1}^{n-1} \sideset{}{'}\sum_{|\beta|\leq 2d_0} \eta_{k,\beta} \, \partial^\beta g_0 \circ \tau ,
\end{alignat}
where $\eta_{n,l}:=(-1)^l \binom{d_n-\mu}{l}$ and the $\eta_{k,\beta}$
are functions containing derivatives at least  
of order 1 of $\tau$, and these can be estimated, say by $c
\prod_{1\leq m\leq 2d_0} \langle \partial_{x_k}^m
\tau\rangle^{2d_0}$. 
Composing with $\sigma$ and applying Lemma~\ref{derivative}(i) gives
for $d:=\min(1,q,p_0,p_n)$, when $\|\cdot\|$ denotes the $F^{s,\vec a}_{\vec p,q}$-norm,
\begin{gather}
\begin{split}
\label{eq:roundingOf}
\| \, f\circ \sigma \, \|^d   &\le 
\sum_{l = 0}^{d_n-\mu} \, |\eta_{n,l}|^d
\, \|\, \partial_{x_n}^{2l} g_1 \, \|^d
+ \, \sum_{k=1}^{n-1}  \sideset{}{'}\sum_{|\beta|\leq 2d_0} \| \, \eta_{k,\beta}\circ\sigma \, | M(F^{s,\vec a}_{\vec p,q})\|^d
\,  \|\, \partial^\beta g_0 \, \|^d
\\
& \le c\, \|\, g_1\, |F^{s+r_\mu,\vec{a}}_{\vec{p},q} \|^d   
 + 
\|\, g_0\, |F^{s+r,\vec{a}}_{\vec{p},q} \|^d
\sum_{k=1}^{n-1} \sideset{}{'}\sum_{|\beta|\leq 2d_0} \| \, \eta_{k,\beta}\circ\sigma \, |M(F^{s,\vec a}_{\vec p,q})\|^d.
\end{split}
\end{gather}
According to the remark preceding Lemma~\ref{mult}, the last sum is finite because 
$\eta_{k,\beta}\in C^\infty_{L_\infty}$.
Finally, since $s+r_\mu>s_1$ and $s+r>s_1$, the stated assumption means that
$h\mapsto g_1$ and $h\mapsto g_0$ are bounded, which in view of $r_\mu+2\mu a_n=r$ and
Lemmas~\ref{lift}--\ref{lem:lift} yields
\begin{equation}
  \| \, f\circ \sigma \, \|^d 
  \le c \|\, h \, |F^{s+r_\mu+2\mu a_n,\vec{a}}_{\vec{p},q} \|^d
      + \|\, h \, |F^{s+r,\vec{a}}_{\vec{p},q} \|^d
  \le c \|\, f \, |F^{s,\vec a}_{\vec p,q} \|^d;
\end{equation}
proving the boundedness of $f \mapsto f\circ \sigma$ in $F^{s,\vec a}_{\vec p,q}$ for all $s\in\R$.
\end{proof}

In addition to the reduction in Proposition~\ref{prop:invarianceAll}, 
we adopt in Theorem~\ref{coo1} below 
the strategy for the isotropic, unmixed case developed by Triebel~\cite[4.3.2]{T92}, who used Taylor
expansions for the inner and outer functions for large $s$.

While his explanation was rather sketchy, our task is to account for the fact that the
strategy extends to anisotropies and to mixed norms. Hence we give full details.
This will also allow us to give brief proofs of additional results in
Sections~\ref{group-ssect} and \ref{derived-sect} below.

To control the Taylor expansions, it will be crucial for us to exploit both the local means
recalled in  
Theorem~\ref{local} and the parameter-dependent set-up in Theorem~\ref{maxmod}. 
This is prepared for with the following discussion.

The functions $k_0$ and $k$ in Theorem~\ref{local} are for the proof of Theorem~\ref{coo1}
chosen (as we may) so that $N$ in the definition of $k$
fulfils $s<2N\underline{a}$ and so that both are \emph{even} functions and
\begin{equation}
  \label{eq:suppk0k}
  \supp k_0, \enskip \supp k \subset \big\{\,x\in\Rn \, \big|\, |x| \le 1\big\}. 
\end{equation}
The set $\Theta$ in Theorem~\ref{maxmod} is chosen to be
the set of $(n-1)\times (n-1)$ matrices $\cb = (b_{i,k})$ that, in terms of
the constants $c_\sigma, C_{\alpha,\sigma}$ in \eqref{det} and \eqref{det0},  respectively,
satisfy
\begin{gather}
  |\det \cb| \ge c_\sigma , 
  \label{eq:requirement1I}
\\
  \max_{i,k} |b_{i,k}| \le \max_{|\alpha|=1} C_{\alpha,\sigma}=: C_\sigma .
  \label{eq:requirement2I} 
\end{gather}

Splitting $z=(z',z_n)$, we set $g(z)={z'}^{\gamma'}k(z)$ for some $\gamma'\in\N_0^{n-1}$ 
(chosen later) and define
\begin{equation}\label{eq:psitheta}
  \psi_\theta(y)=g(\ca y',y_n)
\end{equation}
where $\theta$ is identified with $\ca^{-1}:=J\sigma'(x')$,
which obviously belongs to $\Theta$ (for each $x'$).

To verify that the above functions $\psi_\theta$, $\theta\in \Theta$, satisfy the moment condition 
\eqref{moment} for an $M_{\psi_\theta}$ such that the assumption $s<(M_{\psi_\theta}+1)\underline{a}$ 
in Theorem~\ref{maxmod} is fulfilled, note that
\begin{equation} 
  \widehat{\psi}_\theta(\xi)
  = |\det \ca|^{-1} \cf g \big(\tensor*[^t]{\ca}{^{-1}}\xi',\xi_n\big).
\end{equation}
Hence $D^\alpha \widehat{\psi}_\theta$ vanishes at $\xi=0$ when
$D^\alpha \widehat{g} = D^\alpha (-D_{\xi'})^{\gamma'} \widehat{k}(\xi)$ does so.  
As $\widehat{k}(\xi)=-|\xi|^{2N}\widehat{k^0}(\xi)$ and $\widehat{k^0}(0)\neq 0$, 
we have $D^\alpha \widehat{g}(0)=0$ for $\alpha$ satisfying $|\alpha|+|\gamma'|\leq 2N-1$. 
In the course of the proof below, cf.
Step 3, we obtain a $\theta$-independent estimate of $|\gamma'|$, hence of $M_{\psi_\theta}$.

Moreover, the constant $A$ in Theorem~\ref{maxmod} is finite:
Basic properties of the Fourier transform give the following estimate, where
the constant is independent of $\ca^{-1}\in \Theta$:
\begin{equation}
  \begin{split}
\| \, D^\alpha \cf \psi_\theta\, | L_\infty\|
& \leq   \int  |y^\alpha g (\ca y',y_n)|\, dy\\
& =   |\det \ca^{-1}|
\int  |z_n^{\alpha_n}| \, |(\ca^{-1} z')^{\alpha'} | \, | g (z)|\, dz
 \le  c(\alpha,C_\sigma) 
\int_{|z|\le 1}  |k (z)|\, dz .
\end{split}
\end{equation}
To estimate $B$ we exploit that $\cf\colon B^{n/2}_{2,1}(\Rn)\to L_1(\Rn)$ is bounded according
to Szasz's inequality (cf.~\cite[Prop. 1.7.5]{ScTr87}) and obtain
\begin{equation}
\| \, (1+ |\cdot|)^{M+1}\, D^\gamma \cf \psi_\theta \, | L_1\|
 \le  c\, \|\, y^\gamma  g (\ca y',y_n)\, | B^{M+1+\frac{n}2}_{2,1}\|
 \le  c(\gamma,C_\sigma,C_\tau)\, \|\, k\, | C^m_0\|,
\end{equation}
when $m\in \N$ is chosen so large that $m>M+1+n/2$. In fact, the last inequality is obtained 
using the embeddings $C_0^m \hookrightarrow H^m \hookrightarrow B^{M+1+n/2}_{2,1}$ 
and the estimate
\begin{equation}
  \|\, y^\gamma \psi_\theta\, | C^m_0 \|
  = \sup
  \big|\, \partial^\alpha \big( y^\gamma (\ca y')^{\gamma'} k(\ca y',y_n) \big) \big|
  \leq c(\gamma,C_\sigma,C_\tau)\, \|\, k\, | C^m_0\|.
\end{equation}
This relies on the higher-order chain rule, cf.~Appendix~\ref{app:higherOrderCR}, and
the support of $k$: it suffices to use the supremum over $|\alpha|\le m$ and
$\{y\in\R^n\, |\, |\ca y'|^2 + y_n^2 \leq 1\}$, and for a point in this set
$|y'| \leq \| \ca^{-1}\| | \ca y'| \leq c(C_\sigma)$, 
so we need only estimate on an $\ca$-independent cylinder.

Replacing $k$ by $k_0$ in the definition of $g$
and setting $\psi_{\theta,0}(y):=g(\ca y',y_n)$, the finiteness
of $C$ and $D$ follows analogously. The Tauberian properties follow from
$\int k_0\ne0\ne\int k^0$. 

Hence all assumptions in Theorem~\ref{maxmod} are satisfied, and we are thus ready to prove
our main result

\begin{thm}\label{coo1}
If $\sigma$ is a bounded diffeomorphism on $\Rn$
on the form in~\eqref{eq:conditionsOn_sigma},
then $f \mapsto f \circ \sigma$ is a linear homeomorphism 
$F^{s,\vec{a}}_{\vec{p},q}(\Rn)\to F^{s,\vec{a}}_{\vec{p},q}(\Rn)$ for all $s\in\R$ when
\eqref{eq:conditionsOn_ap} holds.
\end{thm}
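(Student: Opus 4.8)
The plan is to reduce to large $s$ via Proposition~\ref{prop:invarianceAll}, so that the composition operator need only be controlled on $F^{s,\vec a}_{\vec p,q}(\Rn)$ for $s$ above any chosen threshold. For such $s$ I would work with the equivalent local-means quasi-norm $\|\cdot\|^*$ from Theorem~\ref{local}, using the even kernels $k_0,k$ with the support normalisation \eqref{eq:suppk0k}, and with $N$ large enough that $s<2N\underline a$. The heart of the matter is then to estimate, for each $j\in\N_0$, the convolution $k_j*(f\circ\sigma)$ at a point $x$ by a dilated local mean of $f$ itself, evaluated near $y=\sigma(x)$. Writing the convolution out and substituting $z\mapsto\sigma(x)-\sigma(x-2^{-j\vec a}z)$, I would Taylor-expand the inner diffeomorphism $\sigma$ around $x$. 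Because $\sigma$ fixes the last coordinate and only mixes the first $n-1$ variables linearly to leading order through $J\sigma'(x')$, the leading term converts $k_j*(f\circ\sigma)(x)$ into an expression of the form $(\psi_{\theta,j}*f)(\sigma(x))$ with $\theta$ identified with $\ca^{-1}=J\sigma'(x')$ and $\psi_\theta$ as in \eqref{eq:psitheta}; the higher-order Taylor terms produce the extra factors $z'^{\gamma'}$ that were built into $g(z)={z'}^{\gamma'}k(z)$.

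Concretely I would proceed in several steps. \textbf{Step 1:} expand $\sigma(x-2^{-j\vec a}z)=\sigma(x)-2^{-j\vec a}(J\sigma(x)z)+R_j(x,z)$, so the principal contribution reproduces a $\psi_\theta$-type kernel with $\gamma'=0$ while the remainder $R_j$, controlled by \eqref{det0} and the support of $k$, is absorbed into finitely many terms with $\gamma'\ne0$ and a harmless power of $2^{-j}$. \textbf{Step 2:} bound the resulting pointwise expression by the maximal function, $|k_j*(f\circ\sigma)(x)|\le c\sum_{\gamma'}\sup_{\theta\in\Theta}\psi_{\theta,j}^*f(\sigma(x))$, using the Peetre--Fefferman--Stein denominator in \eqref{eq:maximalPFStype} to dominate the anisotropic decay coming from the Taylor error and the dilation weights. \textbf{Step 3:} determine how large $|\gamma'|$ must be for the geometric series in the Taylor remainder to converge and for the moment condition to force $M_{\psi_\theta}$ large enough that $s<(M_{\psi_\theta}+1)\underline a$; since $\widehat k=-|\xi|^{2N}\widehat{k^0}$, one gets vanishing moments up to order $2N-1-|\gamma'|$, and the required $|\gamma'|$ is bounded independently of $\theta$. \textbf{Step 4:} apply Theorem~\ref{maxmod}, whose hypotheses $A,B,C,D<\infty$ were already verified in the preceding discussion, to replace $\sup_{\theta}\psi_{\theta,j}^*f$ by $\varphi_j^*f$; then the change of variables $x\mapsto\sigma(x)$ in the $L_{\vec p}(\ell_q)$-norm is permissible precisely because of \eqref{eq:conditionsOn_ap} and \eqref{eq:conditionsOn_sigma}, contributing only the bounded Jacobian factors from \eqref{det}.

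Assembling these gives $\|f\circ\sigma\|^*\le c\,\|\{2^{sj}\varphi_j^*f\}|L_{\vec p}(\ell_q)\|$, and by Theorem~\ref{inverse} the right side is bounded by $\|f|F^{s,\vec a}_{\vec p,q}\|$, establishing boundedness for all large $s$; Proposition~\ref{prop:invarianceAll} then upgrades this to every $s\in\R$, and applying the same to $\sigma^{-1}$ yields the homeomorphism property. I expect the \emph{main obstacle} to be Steps~1--3: carrying the anisotropic Taylor expansion of $\sigma$ through the quasi-homogeneous dilation $2^{-j\vec a}$ while keeping every error term dominated by the maximal-function denominator, and simultaneously arranging a \emph{single} finite set of $\gamma'$ (hence a uniform $M_{\psi_\theta}$) that works for all $\theta\in\Theta$ and all $j$. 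The fact that $\sigma$ leaves $x_n$ untouched and acts isotropically on the equal-weight block $x_1,\dots,x_{n-1}$ is exactly what makes the change of variables compatible with the mixed norm and what keeps $\psi_\theta$ within the admissible family $\Theta$; without \eqref{eq:conditionsOn_ap} the substitution would not preserve $L_{\vec p}$.
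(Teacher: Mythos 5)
Your overall architecture coincides with the paper's: reduction to large $s$ via Proposition~\ref{prop:invarianceAll}, the local-means norm of Theorem~\ref{local} with even kernels normalised as in \eqref{eq:suppk0k}, the family $\psi_\theta(y)=g(\ca y',y_n)$ indexed by Jacobians $\theta=\ca^{-1}=J\sigma'(x')$, and the closing chain Theorem~\ref{maxmod} $\to$ Theorem~\ref{inverse} $\to$ change of variables in the primed coordinates. But there is a genuine gap precisely where you yourself locate the difficulty: Steps 1--2 cannot be carried out as stated. The Taylor remainder $R_j(x,z)$ of $\sigma$ sits \emph{inside the argument of} $f$, so it cannot be ``absorbed into finitely many terms with $\gamma'\ne0$'' by any manipulation of the kernel alone: the integral $\int k(z)\,f\bigl(y_0(z)+R_j(x,z)\bigr)\,dz$, with $R_j$ nonlinear in $z$, is not a convolution of $f$ with any fixed dilated kernel, so no Peetre--Fefferman--Stein maximal function of $f$ dominates it --- the denominator in \eqref{eq:maximalPFStype} penalises evaluating a \emph{convolution} $\psi_j*f$ at a shifted point; it does not linearise arguments. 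Consequently your claimed pointwise bound $|k_j*(f\circ\sigma)(x)|\le c\sum_{\gamma'}\sup_{\theta\in\Theta}\psi^*_{\theta,j}f(\sigma(x))$ is not obtainable, and without it the whole assembly collapses.

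The missing device is a \emph{second} Taylor expansion, of $f$ itself. The paper expands $\sigma'$ to order $2K-1$, splits off the $\sigma$-remainder by the Mean Value Theorem, and then expands $f(\cdot,x_n)$ --- which lies in $C^{K}$ thanks to the window \eqref{ka}, i.e.\ $Ka_0+(n-1)\frac{a_0}{p_0}+\frac{a_n}{\min(p_0,p_n)}<s<2Ka_0$ --- to order $K-1$ around the \emph{linearised} point $P_1(2^{-ja'}z')$. Only this converts the nonlinear $z$-dependence into multiplicative polynomial factors $P(2^{-ja'}z')^\beta$, whose multinomial expansion produces the monomials $z'^{\gamma'}$ built into $g$; but they come attached to \emph{derivatives} $D^\beta f$, $|\beta|\le K-1$, not to $f$. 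The correct estimate therefore runs through $\sup_{\theta}\psi^*_{\theta,j}D^\beta f$ with weights $2^{j(s-2|\beta|a_0)}$, followed by Theorem~\ref{inverse} and Lemma~\ref{derivative}(i); and the two remainder families (the MVT terms with $\partial_{x_d}f$, and the $|\beta|=K$ terms of $f$'s expansion) are not handled by maximal functions at all, but by the gain $2^{-2jKa_0}$ with $2Ka_0>s$ combined with the sup-over-ball estimate of Lemma~\ref{lem:lpLTEstimate}, applicable to those derivatives exactly because of \eqref{ka}. This double expansion also fixes the uniform bound $|\gamma'|\le(K-1)(2K-1)$, hence $M_{\psi_\theta}=2N-1-(K-1)(2K-1)$ and the requirement $2N>(K-1)(2K-1)+s/\underline{a}$, for which your Step 3 has no substitute. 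Since your proposal never chooses such a $K$ and never differentiates $f$, it lacks the mechanism for both the linearisation and the remainders; this is the core of the proof rather than a technicality.
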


\begin{proof}
According to Proposition~\ref{prop:invarianceAll}, it suffices to consider $s>s_1$, say for
\begin{equation}
\label{eq:defOfs1}
s_1 := K_0 a_0+(n-1) \frac{a_0}{p_0} + \frac{a_n}{\min(p_0,p_n)},
\end{equation}
whereby $K_0$ is the smallest integer satisfying
\begin{equation}
\label{eq:defOfK0}
K_0 a_0 >  (n-1) \frac{a_0}{p_0} + \frac{a_n}{\min(p_0,p_n)}.
\end{equation}
We now let $s\in\, ]s_1,\infty[\,$ be given and take some $K\geq K_0$, i.e.~$K$
solving \eqref{eq:defOfK0}, such that
\begin{equation}\label{ka}
  K a_0 + (n-1) \frac{a_0}{p_0} + \frac{a_n}{\min (p_0,p_n)}  < s < 2 K a_0.
\end{equation}
(The interval thus defined is non-empty by \eqref{eq:defOfK0}, and the 
left end point is at least $s_1$.) 

Note that \eqref{ka} yields that every $f \in F^{s,\vec{a}}_{\vec{p},q}$ is continuous, 
cf.~Lemma~\ref{derivative}(iii); so are even the derivatives
$D^\beta f$ for $\beta=(\beta_1, \ldots, \beta_{n-1},0)$, $|\beta|\le K$, since 
$s-\beta \cdot \vec{a} =  s - |\beta| a_0 > \vec a\cdot 1/{\vec p}$.

\smallskip{\em Step 1.} 
For the norms $\|\,f\circ\sigma\,|F^{s,\vec a}_{\vec p,q}\|$ and  
$\|\,f\,|F^{s,\vec a}_{\vec p,q}\|$ in inequality \eqref{eq:invarianceAllSufficient}, which also here
suffices, we use
Theorem~\ref{local} with $2N>(K-1)(2K-1)+s/\underline{a}$.

By the symmetry of $k_0$ and $k$ in \eqref{eq:suppk0k}, we shall estimate
\begin{equation}\label{eq:convolution_kj_fsigma}
k_j * (f\circ \sigma) (x)=  \int_{|z|\leq 1} k(z)\, f(\sigma(x + 2^{-j\vec{a}} z))\, dz  , 
\quad j \in \N,
\end{equation}
together with the corresponding expression for $k_0$, where $k$ is replaced by $k_0$.

First we make a Taylor expansion of the entries in
$\sigma'(x'):=(\sigma_1(x'),\ldots,\sigma_{n-1}(x'))$, to the order $2K-1$.
So for $\ell=1,\ldots,n-1$ there exists $\omega_\ell\in\, ]0,1[$ such that
\begin{alignat}{1}
  \sigma_\ell (x' + z')  =  
  \sum_{|\alpha'|< 2K} \frac{\partial^{\alpha'} \sigma_\ell(x')}{\alpha'!} z'^{\,\alpha'}
  + \sum_{|\alpha'| = 2K} \frac{\partial^{\alpha'} \sigma_\ell (x'+\omega_\ell  z')}{\alpha' !}
  {z'}^{\,\alpha'} .  
  \label{eq:sigmaTaylor}
\end{alignat}
For convenience, we let $\sum'_\alpha$ denote summation over multiindices $\alpha\in\N_0^n$
having $\alpha_n=0$ and define the vector of Taylor polynomials, respectively entries of a
remainder $R$,  
\begin{equation}
  P_{2K-1}(z') = 
  \sideset{}{'}\sum_{|\alpha|\leq 2K-1} \frac{\partial^{\alpha} \sigma'(x')}{\alpha!}{z}^{\alpha},
\qquad
  R_\ell (z') =  \sideset{}{'}\sum_{|\alpha| = 2K} \frac{\partial^{\alpha} \sigma_\ell (x' + 
  \omega_\ell z')}{\alpha !}\, {z}^{\alpha}. 
\end{equation}

Applying the Mean Value Theorem to $f$, cf.~\eqref{ka}, now yields an $\widetilde \omega\in\,
]0,1[\,$ so that
\begin{gather}
\begin{split}\label{eq-9}
|k_j * (f\circ \sigma) (x)|  \le 
\Big| &\int_{|z|\leq 1} k(z)\, f(P_{2K-1}(2^{-ja'}z'), x_n + 2^{-ja_n}z_n)\, dz\Big|
\\ 
& +
\sum_{d=1}^{n-1} \int_{|z|\leq 1} |k (z) \, \partial_{x_d} f(y',x_n+ 2^{-ja_n}z_n)\, R_d(2^{-ja'}z')|\, dz,
\end{split}
\end{gather}
when $y':=P_{2K-1}(2^{-ja'}z')+\widetilde\omega (R_1(2^{-ja'}z'),\ldots,R_{n-1}(2^{-ja'}z'))$.
Using \eqref{det0} and \eqref{eq:sigmaTaylor}, it is obvious that this $y'$ fulfils
\begin{equation}\label{dist}
|\sigma(x)-(y',x_n+2^{-j a_n} z_n)| \leq |\sigma'(x') -y'| + |2^{-j a_n} z_n| < C
\end{equation}
for each $z\in\supp k$ and some constant $C$ depending only on $n$
and $C_{\alpha,\sigma}$ with $|\alpha|\le 2K$.

\smallskip
{\em Step 2}.
Concerning the remainder terms in \eqref{eq-9} we exploit \eqref{dist} to get
\begin{equation}
  \begin{split}
  \int_{|z|\leq 1} |k (z) \, &\partial_{x_d} f(y',x_n + 2^{-ja_n}z_n)\, R_d(2^{-ja'}z')|\, dz
\\
 & \le  2^{-2jKa_0} \Big(\sum_{|\alpha'| = 2K} \frac{\| \partial^{\alpha'} \sigma_d \, |
   L_\infty\|}{\alpha'!}\Big) 
 \int_{|z|\leq 1} |k(z)|\, dz  \sup_{|\sigma(x)-y|<C}| \partial_{x_d} f(y)|  .
\end{split}
\end{equation}
The exponent in $2^{-2jKa_0}$ is a result of  \eqref{eq:conditionsOn_ap}
and the chosen Taylor expansion of $\sigma(x+2^{-j\vec{a}}z)$, and since $s-2Ka_0<0$ the norm of
$\ell_q$ is trivial to calculate, whence 
\begin{gather}
\begin{split}
\label{eq:step1beforeCC}
\Big\|\, 2^{js}  \int_{|z|\leq 1} |k(z) \,  \partial_{x_d} f(y',x_n+ 2^{-ja_n}z_n)\,
&R_d(2^{-ja'}z')|\, dz \, \Big| L_{\vec{p}}(\ell_q )\Big\|\\
&\le  c\,  \Big\|\,  \sup_{|\sigma (x)-y|< C} | \partial_{x_d} f(y)|\,  
\Big| L_{\vec{p}}(\mathbb{R}^n_x) \Big\| .
\end{split}
\end{gather}
Now we use that $p_1= \ldots =p_{n-1}$ to change variables in the resulting integral over $\R^{n-1}$,
with $\tau'$ denoting $(\sigma')^{-1}$.
Since Lemma~\ref{lem:lpLTEstimate} in view of \eqref{ka} applies to $\partial_{x_d} f$, $d=1,\ldots, n-1$, the right-hand side of the last
inequality can be estimated, using also Lemma~\ref{derivative}(i), by
\begin{equation}
\label{eq:endSubstep11}
  c\, \Big(\sup_{y\in {\R}^{n-1}}  |\det J\tau'(y)|\Big)^{1/p_0}
 \,  
 \|\, \partial_{x_d} f \, |  F^{s-a_d, \vec{a}}_{\vec{p},q}\|
 \leq c\, \|\, f\,| F^{s,\vec a}_{\vec p,q}\|.
\end{equation}

\smallskip
{\em Step 3}. To treat the first term in \eqref{eq-9}, we  
Taylor expand $f(\cdot,x_n)$, which is in $C^K(\mathbb{R}^{n-1})$.
Setting $P(z')=P_{2K-1}(z')-P_1(z')$,  expansion at the vector $P_1(2^{-ja'}z')$ gives 
\begin{alignat}{1}
  \nonumber
 f(P_{2K-1}(2^{-ja'}z'), x_n+ 2^{-ja_n}z_n)
  =
 \sideset{}{'}\sum_{0 \le |\beta| \le K-1} &\frac{D^\beta 
 f (P_1(2^{-ja'}z'), x_n+ 2^{-ja_n}z_n)}{\beta !} \, P(2^{-ja'}z')^\beta 
 \\
  +  
  \sideset{}{'}\sum_{|\beta| = K} &\frac{D^\beta 
 f (y', x_n+ 2^{-ja_n}z_n)}{\beta !} \, P(2^{-ja'}z')^\beta  ,
 \label{eq:taylor_f_orderK-1}
\end{alignat}
where  $y'$ is a vector analogous to that in \eqref{eq-9} and satisfies
\eqref{dist}, perhaps with another $C$. 

To deal with the remainder in \eqref{eq:taylor_f_orderK-1},
note that the order was chosen to ensure
that, in the powers $P(2^{-ja'}z')^\beta$, the $l$'th factor
is the $\beta_l$'th power of a sum of terms each containing a factor $2^{-ja_0|\alpha'|}$
with $|\alpha'|\geq 2$. Hence each $|\beta|=K$ in total contributes by $O(2^{-2jKa_0})$.
More precisely, as in Step~2 we obtain
\begin{equation}
  \begin{split}
\int_{|z|\leq 1} &\Big| k(z)\, \sideset{}{'}\sum_{|\beta| = K} \frac{D^\beta 
 f (y', x_n+ 2^{-ja_n}z_n)}{\beta !}
  \,P(2^{-ja'}z')^\beta \Big| \, dz
\\
&\qquad \le  2^{-j2Ka_0} \int_{|z|\leq 1} |k(z)|\, dz \,
 \big(\sum_{2\le |\alpha| \le 2 K-1} C_{\alpha,\sigma} \big)^{K}
\,\sideset{}{'}\sum_{|\beta|=K}\sup_{|\sigma (x)-y|< C}  | D^\beta f(y)|.
\end{split}
\end{equation}
In view of \eqref{ka}, Lemma~\ref{lem:lpLTEstimate} barely also applies to 
$D^\beta f$ for $|\beta|=K$, so the above gives
\begin{equation}
  \begin{split}
\Big\| \, 2^{sj} \int_{|z|\leq 1} k(z)& \sideset{}{'}\sum_{|\beta| = K} \frac{D^\beta 
f (y', x_n+ 2^{-ja_n}z_n)}{\beta !}
\,P(2^{-ja'}z')^\beta \, dz\,\Big| L_{\vec{p}}(\ell_q)\Big\|
\\
\quad &\le   c\, \big(\sup_{y\in {\R}^{n-1}} |\det J\tau'(y)|\big)^{\frac1{p_0}}
\sideset{}{'}\sum_{|\beta|=K}  
 \|\, D^\beta f \, | F^{s-\beta \cdot \vec{a}, \vec{a}}_{\vec{p},q}\| 
\le c\, \|\, f\, | F^{s,\vec a}_{\vec p,q}\| .
\end{split}  
\end{equation}

Now it remains to estimate the other terms resulting from \eqref{eq:taylor_f_orderK-1}, i.e.
\begin{equation}
 \sideset{}{'}\sum_{0 \le |\beta| \le K-1} \int_{|z|\leq 1}  k(z)\, 
  \frac{D^\beta  f (P_1(2^{-ja'}z'), x_n+ 2^{-ja_n}\,z_n)}{\beta !}
  \, P(2^{-ja'}z')^\beta\, dz.  
  \end{equation}
Using the multinomial formula on
$P(z') = \sideset{}{'}\sum_{2\le|\gamma|\leq 2K-1} {z}^{\gamma}\partial^{\gamma}\sigma'(x')/\gamma!$
and the $g$ and $\psi_\theta$ discussed in \eqref{eq:psitheta}, the above task is finally reduced
to controlling terms like 
\begin{gather}
\begin{split}
  I_{j,\beta,\gamma} (\sigma'(x'),x_n) &:= 2^{-2j|\beta|a_0} \int_{|z|\leq 1} g(z)
  D^\beta f (\sigma'(x')+2^{-ja_0} J\sigma'(x') z', x_n+ 2^{-ja_n}z_n)\,  dz
\\
  &= 2^{-2j|\beta|a_0} |\det \ca|  
  \int \psi_{\theta}(y)
  D^\beta f (\sigma'(x') +2^{-ja_0} y' , x_n+ 2^{-ja_n} y_n) \,  dy.
\end{split}\label{eq:Ij_coordinateChange}
\end{gather}
Note that in $g$, $\psi_\theta$ we have $2 \le |\gamma| \le |\beta|(2K-1)$ and 
$|\beta|\le K-1$, $\beta_n=0=\gamma_n$. 

\smallskip
{\em Step 4}.
Before we estimate \eqref{eq:Ij_coordinateChange}, 
it is first observed that all previous steps apply in a similar way to the convolution
$k_0 * (f\circ \sigma)$\,---\,except in this case there is no dilation, so 
the $\ell_q$-norm is omitted and the function $\psi_\theta$ is replaced by $\psi_{\theta,0}$.

So, when collecting the terms of the form \eqref{eq:Ij_coordinateChange} with finitely many $\beta$,
$\gamma$ in both cases
(omitting remainders from Steps~2--3), we obtain with two changes of variables and \eqref{*max-ineq},
\begin{equation}
  \begin{split}
\Big\|\, &\sideset{}{'}\sum_{\beta, \gamma} I_{0,\beta, \gamma}(\sigma'(x'),x_n)\, \Big| L_{\vec{p}} \Big\| +
\Big\|\, 2^{js} \sideset{}{'}\sum_{\beta, \gamma}  I_{j,\beta,\gamma}(\sigma'(x'),x_n) \, \Big|
L_{\vec{p}}(\ell_q) \Big\|
\\
& \le c \sideset{}{'}\sum_{\beta, \gamma} 
\big(\sup_{y\in \mathbb{R}^{n-1}} |\det J\tau'(y)|\big)^{\frac1{p_0}} 
\Big(\, \Big\| \int \psi_{\theta,0}(y) D^\beta f(x-y)\, dy \, \Big| L_{\vec{p}}\Big\| 
 \\
&\quad\qquad\qquad\qquad\qquad + \Big\|\, 2^{j(s-2|\beta|a_0)} \int \psi_\theta(y) D^\beta f(x-2^{-j\vec{a}}y) \, dy  \, \Big| L_{\vec{p}}(\ell_q) \Big\|\, \Big)
 \\
& \le c \sideset{}{'} \sum_{\beta, \gamma}
\Big\|\, \Big\{ 2^{j(s-2|\beta|a_0)} \sup_{\theta\in \Theta} \, \psi_{\theta,j}^* D^\beta f \Big\}_{j=0}^\infty \, 
\Big| L_{\vec{p}}(\ell_q) \Big\| . \label{eq:justBeforeTh2}
\end{split}
\end{equation}
Here we apply Theorem~\ref{maxmod} to the family of functions $\psi_{\theta,0}, \psi_\theta$  
with the $\varphi_j$ chosen as the Fourier transformed of the 
system in the Littlewood-Paley decomposition, cf.~\eqref{unity}.
Estimating $|\gamma|$, the $\psi_\theta$ satisfy the moment condition \eqref{moment} with 
$M_{\psi_\theta}:=2N-1-(K-1)(2K-1)$, which fulfils $s<(M_{\psi_\theta}+1)\underline{a}$, because
of the choice of $N$ in Step~1. So, by applying Theorem~\ref{inverse}  
and Lemma~\ref{derivative}(i), using $s-2|\beta|a_0\le s- \beta \cdot \vec{a}$,
the above is estimated thus:
\begin{equation}
  \begin{split}
  \Big\|\, \Big\{ 
                 &2^{js} \sideset{}{'}\sum_{\beta, \gamma} I_{j,\beta,\gamma}(\sigma'(x'),x_n) 
   \Big\}_{j=0}^\infty \, \Big| L_{\vec{p}}(\ell_q) \Big\|
\\
  &\quad\le c(A+B+C+D)\sideset{}{'}\sum_{\beta,\ \gamma}
   \left\| \, \left\{ 
    2^{j(s-2|\beta|a_0)} (\cfi\Phi_j)^* D^\beta f \right\}_{j=0}^\infty \, 
   \middle|  L_{\vec{p}}(\ell_q)\right\|
\\
  &\quad \le c\sideset{}{'}\sum_{\beta,\ \gamma}
  \|\, D^\beta f\, |\, F^{s-2|\beta|a_0,\vec{a}}_{\vec{p},q} \, \|\, 
   \le c\, \| \,  f\, |F^{s,\vec{a}}_{\vec{p},q}  \| .
\end{split}
\end{equation}
This proves the necessary estimate for the given $s>s_1$.
\end{proof}

\subsection{Groups of bounded diffeomorphisms}
  \label{group-ssect}
It is not difficult to see that the proofs in Section~\ref{bDiff-ssect} 
did not really use that $x_n$ is a single variable.
It could just as well have been replaced by a whole group of variables $x''$, corresponding to a
splitting $x=(x',x'')$, provided $\sigma$ acts as the identity on $x''$. 

Moreover,  
$x'$ could equally well have been `embedded' into $x''$, that is $x''$ could contain variables
$x_k$ both with $k<j_0$ and with $k>j_1$ when $x'=(x_{j_0},\ldots,x_{j_1})$ 
(but no interlacing); in particular the changes of variables yielding 
\eqref{eq:endSubstep11} would carry over to this situation when $p_{j_0}=\ldots=p_{j_1}$.
It is also not difficult to see that Proposition~\ref{prop:invarianceAll} 
extends to this situation when
$a_{j_0}=\ldots=a_{j_1}$ (perhaps with several $g_1$-terms, each having a value of $\mu$). 

Thus we may generalise Theorem~\ref{coo1} to situations with a splitting into $m\ge2$ groups, 
i.e.\ $\Rn=\R^{N_1}\times\ldots\times\R^{N_m}$ where $ N_1 + \ldots + N_m=n$, namely when
\begin{align}
  \vec{p} &= (\underbrace{p_1, \ldots, p_1}_{N_1}, \underbrace{p_{2} , \ldots , p_2}_{N_2}, 
  \ldots ,\underbrace{p_m, \ldots, p_m}_{N_m}), 
\label{group1} \\
  \vec{a} &= ({a_1, \ldots, a_1}, {a_{2} , \ldots , a_2}, 
  \ldots ,{a_m, \ldots, a_m}),
\label{group2} \\
  \sigma(x)&=(\sigma'_1(x_{(1)}), \ldots , \sigma'_m(x_{(m)}))
\label{group3}
\end{align}
with arbitrary bounded diffeomorphisms  $\sigma'_j$ on $\R^{N_j}$ and $x_{(j)}\in\R^{N_j}$.

Indeed, viewing $\sigma$ as a composition of $\sigma_1:=\sigma'_1\otimes 
\operatorname{id}_{\R^{n-N_1}}$ etc.\ on $\Rn$, the above gives
\begin{equation}
\|\, f\circ\sigma\, | F^{s,\vec a}_{\vec p,q}\| \leq c\, \|\, f\circ\sigma_m\circ\ldots\circ\sigma_2\,
| F^{s,\vec a}_{\vec p,q}\|\leq \ldots\leq c\,\|\, f\, | F^{s,\vec a}_{\vec p,q}\|. 
\end{equation}

\begin{thm}
  \label{coo2-thm}
 $f\mapsto f\circ\sigma$ is a linear homeomorphism on $F^{s,\vec a}_{\vec p,q}$ 
when \eqref{group1}, \eqref{group2}, \eqref{group3} hold.
\end{thm}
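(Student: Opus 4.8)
The plan is to reduce the statement to the single-block case already settled in Theorem~\ref{coo1}, by factoring $\sigma$ into diffeomorphisms that each transform only one group of variables. Since $\sigma$ acts block-diagonally as in \eqref{group3}, I would set $\sigma_j := \sigma'_j\otimes\operatorname{id}_{\R^{n-N_j}}$ for $j=1,\ldots,m$, so that $\sigma_j$ coincides with $\sigma'_j$ on the $j$'th block $x_{(j)}\in\R^{N_j}$ and acts as the identity on the remaining variables. Because the blocks are disjoint, these maps commute and $\sigma=\sigma_1\circ\ldots\circ\sigma_m$ irrespective of the chosen order; moreover each $\sigma_j$ is itself a bounded diffeomorphism on $\Rn$, as $\sigma'_j$ is one on $\R^{N_j}$ and the identity contributes only trivial derivatives.

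Next I would invoke the block version of Theorem~\ref{coo1}. The key point, spelled out in the discussion preceding the theorem, is that the proof of Theorem~\ref{coo1} nowhere exploits that the untouched coordinate $x_n$ is a single variable: it may be replaced by an entire group $x''$ on which $\sigma$ acts as the identity, and the active block $x'$ need not consist of the first coordinates but may be embedded among the others without interlacing. Two ingredients must survive this relocation. First, the change of variables yielding \eqref{eq:endSubstep11} requires only that the integral exponents be constant across the active block, which holds by \eqref{group1}. Second, Proposition~\ref{prop:invarianceAll}, together with the lift operators and the maximal-function machinery of Theorem~\ref{maxmod}, extends under the matching demand that the anisotropic weights be constant on that block, guaranteed by \eqref{group2}. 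Hence each $f\mapsto f\circ\sigma_j$ is a bounded linear operator on $F^{s,\vec a}_{\vec p,q}(\Rn)$ for every $s\in\R$.

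The conclusion then follows by iterating the single-block bounds along the factorisation,
\begin{equation*}
  \|\, f\circ\sigma\, | F^{s,\vec a}_{\vec p,q}\|
  = \|\, f\circ\sigma_1\circ\ldots\circ\sigma_m\, | F^{s,\vec a}_{\vec p,q}\|
  \le c\,\|\, f\circ\sigma_2\circ\ldots\circ\sigma_m\, | F^{s,\vec a}_{\vec p,q}\|
  \le \ldots \le c\,\|\, f\, | F^{s,\vec a}_{\vec p,q}\|.
\end{equation*}
For the homeomorphism property I would observe that $\tau=\sigma^{-1}$ inherits exactly the same block-diagonal structure, namely $\tau(x)=((\sigma'_1)^{-1}(x_{(1)}),\ldots,(\sigma'_m)^{-1}(x_{(m)}))$ with the same splitting \eqref{group1}--\eqref{group2}; hence $f\mapsto f\circ\tau$ is bounded on $F^{s,\vec a}_{\vec p,q}$ by the identical argument, and since it is the inverse of $f\mapsto f\circ\sigma$, the latter is a linear homeomorphism.

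I expect the main obstacle to be organisational rather than analytic, since the substantial work is already contained in Theorem~\ref{coo1} and Proposition~\ref{prop:invarianceAll}. The delicate point is merely to confirm that their proofs are robust against enlarging and relocating the untouched variable group, which rests entirely on the block-constancy of $\vec p$ and $\vec a$: this is what legitimises the coordinate changes within each block and secures the boundedness of the lift operators $\Lambda_r$ and $(1-\partial_{x_k}^2)^\mu$ used in the reduction to large~$s$.
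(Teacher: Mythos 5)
Your proposal is correct and coincides with the paper's own argument: the paper likewise observes that the proof of Theorem~\ref{coo1} never uses that the untouched coordinate is a single variable (only the block-constancy of $\vec p$ for the change of variables in \eqref{eq:endSubstep11} and of $\vec a$ for Proposition~\ref{prop:invarianceAll}), factors $\sigma$ as a composition of maps $\sigma'_j\otimes\operatorname{id}$, and iterates the single-block bound, with the homeomorphism property following from the inverse having the same block structure.
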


\section{Derived results} 
  \label{derived-sect}

\subsection{Diffeomorphisms on Domains}
The strategies of Proposition~\ref{prop:invarianceAll} and Theorem~\ref{coo1} also give the
following local version. E.g., for the paraboloid $U=\{\,x\mid x_n>x_1^2+\ldots+x_{n-1}^2\,\}$ 
we may take $\sigma$ to consist in a rotation around the $x_n$-axis; cf.\
\eqref{eq:conditionsOn_sigma}. 

\begin{thm}\label{thm:localVersion}
Let $U,V\subset \Rn$ be open and $\sigma: U\to V$ a $C^\infty$-bijection 
as in~\eqref{eq:conditionsOn_sigma}. 
If \eqref{eq:conditionsOn_ap} is fulfilled and $f\in F^{s,\vec a}_{\vec p,q}(V)$ has compact support,
then $f \circ \sigma \in F^{s,\vec a}_{\vec p,q}(U)$ and 
\begin{equation}
  \label{eq:localVersion}
  \|\, f\circ \sigma\, | F^{s,\vec a}_{\vec p,q}(U)\| 
  \leq c \|\, f\, | F^{s,\vec a}_{\vec p,q}(V)\|
\end{equation}
holds for a constant $c$ depending only on $\sigma$ and the set $\supp f$.
\end{thm}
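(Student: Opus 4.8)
The plan is to reduce the assertion to the global invariance statement in Theorem~\ref{coo1} by extending $\sigma$ to a bounded diffeomorphism of all of $\Rn$ and transporting the estimate through the quotient-norm structure of Definition~\ref{def:Fsubspace}. Write $K=\supp f$, a compact subset of the open set $V$, so that $\dist(K,\Rn\setminus V)=:2\delta>0$; and set $L=\sigma^{-1}(K)$, which is compact in $U$. First I would fix the local means of Theorem~\ref{local} with $\supp k_0,\supp k\subset B(0,r)$ for some $r<\delta$; then Lemma~\ref{lem:equalityInfnorm} applies to $f\in F^{s,\vec a}_{\vec p,q}(V)$ and yields $e_Vf\in F^{s,\vec a}_{\vec p,q}(\Rn)$ together with the identity $\|\,e_Vf\,|F^{s,\vec a}_{\vec p,q}(\Rn)\|=\|\,f\,|F^{s,\vec a}_{\vec p,q}(V)\|$. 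Thus the zero-extension $e_Vf$ is a genuine element of the space on $\Rn$ whose support is exactly $K$.

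The main obstacle is the construction of a global bounded diffeomorphism $\Sigma$ of $\Rn$, of the product form in~\eqref{eq:conditionsOn_sigma}, that coincides with $\sigma$ on a neighbourhood $W$ of $L$ with $\overline{W}\subset U$, and satisfies $\Sigma^{-1}(K)=L$. Since the last coordinate is already the identity, this reduces to extending the $(n-1)$-dimensional map $\sigma'$ from a neighbourhood of the projection $\pi'(L)$ to a bounded diffeomorphism of $\R^{n-1}$. I would cover $\pi'(L)$ by finitely many small balls and, on each, interpolate between $\sigma'$ and its affine linearisation $\ell$ at the centre by a cut-off $\rho$, i.e.\ $\rho\,\sigma'+(1-\rho)\,\ell$; on balls small enough that $\sigma'-\ell$ and its first derivatives are small, this stays injective with non-vanishing Jacobian, so that gluing yields $\Sigma'$ with $\Sigma'=\sigma'$ near $\pi'(L)$, $\Sigma'$ affine outside a compact set, and all the bounds~\eqref{det0}--\eqref{det0'} of a bounded diffeomorphism. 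One arranges simultaneously that $\Sigma$ maps $\Rn\setminus W$ off $K$, securing $\Sigma^{-1}(K)=L\subset W$. This extension step is the technical heart of the argument.

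Finally I would set $G:=(e_Vf)\circ\Sigma$, which by Theorem~\ref{coo1} lies in $F^{s,\vec a}_{\vec p,q}(\Rn)$ with $\|\,G\,|F^{s,\vec a}_{\vec p,q}(\Rn)\|\le c(\Sigma)\,\|\,e_Vf\,|F^{s,\vec a}_{\vec p,q}(\Rn)\|=c\,\|\,f\,|F^{s,\vec a}_{\vec p,q}(V)\|$, the constant depending only on $\sigma|_{\overline{W}}$ and the chosen extension, hence only on $\sigma$ and $\supp f$. Because $\supp G=\Sigma^{-1}(K)=L\subset U$ and $\Sigma=\sigma$ (so $\Sigma^{-1}=\tau$) near $K$, a short computation from the definition~\eqref{eq:defCompositionSprime} of composition shows $r_UG=f\circ\sigma$: only test functions supported near $L$ enter, and there the two composition formulas agree. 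Consequently $f\circ\sigma=r_UG\in F^{s,\vec a}_{\vec p,q}(U)$, and the quotient quasi-norm of Definition~\ref{def:Fsubspace} gives $\|\,f\circ\sigma\,|F^{s,\vec a}_{\vec p,q}(U)\|\le\|\,G\,|F^{s,\vec a}_{\vec p,q}(\Rn)\|\le c\,\|\,f\,|F^{s,\vec a}_{\vec p,q}(V)\|$, as claimed. Apart from the extension lemma, the point deserving care is precisely this support-localisation, which guarantees that $G$ really restricts to $f\circ\sigma$ even though $\Sigma$ differs from $\sigma$ far from $L$.
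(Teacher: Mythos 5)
Your reduction scheme (pass to $e_Vf$ via Lemma~\ref{lem:equalityInfnorm}, compose with a global bounded diffeomorphism $\Sigma$ extending $\sigma$ near $L=\sigma^{-1}(\supp f)$, apply Theorem~\ref{coo1}, then restrict and use the quotient quasi-norm) would indeed yield \eqref{eq:localVersion} \emph{if} such a $\Sigma$ existed, and your final support-localisation argument showing $r_UG=f\circ\sigma$ is correct. But the step you yourself call the technical heart is false in general: a diffeomorphism between open subsets of Euclidean space need not agree, near a prescribed compact set, with \emph{any} diffeomorphism of the whole space. Already for $n=2$ take $U=\big((0,1)\cup(2,3)\big)\times\R$, $V=\big((0,1)\cup(7,8)\big)\times\R$ and $\sigma(x_1,x_2)=(x_1,x_2)$ for $x_1\in(0,1)$, $\sigma(x_1,x_2)=(10-x_1,x_2)$ for $x_1\in(2,3)$; this satisfies every hypothesis of Theorem~\ref{thm:localVersion}. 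If $\supp f$ meets both components of $V$, then your $\Sigma'$ would have to be a diffeomorphism of $\R$ that is increasing near one compact interval and decreasing near another, which is impossible since an injective continuous map $\R\to\R$ is strictly monotone. In higher dimensions there are obstructions even for connected $U$: for $n-1=3$, a tube around an unknotted circle can be mapped diffeomorphically onto a tube around a trefoil, and no diffeomorphism of $\R^3$ agrees with this near the core circle. Independently of these topological obstructions, your construction itself does not deliver injectivity: gluing the maps $\rho\,\sigma'+(1-\rho)\,\ell$, each injective on its small ball, does not produce a globally injective map, because distinct patches can have overlapping images --- injectivity is not a local property.

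This is precisely why the paper's proof never extends $\sigma$; it stays local. For $s>s_1$ it reruns the proof of Theorem~\ref{coo1} with kernels $k_0,k$ supported in $B(0,r)$, where $6r$ is small compared with $\dist(K,\Rn\setminus V)$ and $\dist(\sigma^{-1}(K),\Rn\setminus U)$, so that Lemma~\ref{lem:equalityInfnorm} identifies the quotient quasi-norms of $f$ and $f\circ\sigma$ with the norms of $e_Vf$ and $e_U(f\circ\sigma)$; all suprema of $\partial^\alpha\sigma$ and $\det J\tau'$ are then taken only over the compact set $W=\supp(f\circ\sigma)+\overline{B}(0,r)$, which is exactly where the constant acquires its dependence on $\supp f$ and $\sigma$. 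For $s\le s_1$ it does not invoke Proposition~\ref{prop:invarianceAll} as a black box but redoes that reduction with the lift $\Lambda_r$ applied to $e_Vf$ and with cut-off functions $\chi,\chi_1$ inserted to keep everything supported near $L$. To salvage your approach you would need to prove the extension under additional hypotheses on $\sigma$ and $U$, and even then the general case of the theorem would remain uncovered; as written, the proposal does not prove the statement.
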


\begin{proof}
{\em Step 1.}
Let us consider $s>s_1$, cf.~\eqref{eq:defOfs1}, and adapt the proof of Theorem~\ref{coo1} to
the local set-up.  We shall prove the statement for the
$f\in F^{s,\vec a}_{\vec p,q}(V)$ satisfying $\supp f\subset K\subset V$ for some arbitrary compact set
$K$. First we fix $r\in\,]0,1[\,$ so small that
\begin{equation}
  6r<\min\big(\dist(K,\R^{n}\setminus V),\,
            \dist({\sigma}^{-1}(K),\R^{n}\setminus U)\big).
\label{6r-ineq}
\end{equation}
Then, by Lemma~\ref{lem:equalityInfnorm}, we have
$\|\, f\circ\sigma\, | F^{s,\vec a}_{\vec p,q}(U)\| = \|\, e_U(f\circ\sigma)\, | F^{s,\vec a}_{\vec
  p,q}\|$ 
when Theorem~\ref{local} is utilised for $k_0,k\in\cs$, say so that 
$\supp k_0,\supp k \subset B(0,r)$; cf.\ also \eqref{eq:suppk0k}.
Extension by 0 outside $U$ of $f\circ\sigma$
is redundant, for it suffices to integrate over $x\in W:=\supp (f\circ\sigma)+\overline{B}(0,r)$.
However, to apply the Mean Value Theorem, cf.~\eqref{eq-9}, 
we extend $f$ by 0 instead, i.e.~we consider 
\eqref{eq:convolution_kj_fsigma} with integration over $|z|\leq r$ and with $f$ replaced by $e_V f$.

Since $e_V f$ inherits the regularity of $f$ (cf.\  Lemma~\ref{lem:equalityInfnorm})
and $\partial^\alpha \sigma$ can be estimated on the compact set ${W}$, 
the proof of Theorem~\ref{coo1} carries over straightforwardly. 
E.g.\ one obtains a variant of \eqref{eq:endSubstep11} where $|\det J\tau'(x')|^{1/p_0}$ is
estimated over  
$\{ x'\, | \exists x_n\colon (x',x_n)\in\sigma ({W})\}$,
and the integration is then extended to $\Rn$, which by Lemma~\ref{lem:equalityInfnorm} yields
\begin{equation}
\|\, \sup_{|x-y|<C} |\partial_{x_d} e_V f(y)|\, | L_{\vec p}(\mathbb{R}^n_x)\|
\leq c\, \|\, e_V f\, | F^{s,\vec a}_{\vec p,q}(\Rn)\|
= c\, \|\, f\, | F^{s,\vec a}_{\vec p,q}(V)\|.
\end{equation}

To estimate the first term in \eqref{eq-9} in this local version, the argumentation there is
modified as above  and the set $\Theta$ is chosen to be the set of all $(n-1)\times(n-1)$ matrices
satisfying~\eqref{eq:requirement1I} with infimum over $x\in {W}$ and
\eqref{eq:requirement2I} with 
$C_{\sigma} := \max_{1\le j\le n,\ |\alpha|=1 } \sup_{x\in{W}} |D^\alpha\sigma_j(x)|$.

Before applying Theorem~\ref{maxmod} to the new estimate \eqref{eq:justBeforeTh2}, 
the integration is extended  to $\Rn$ (using $e_V f$). Then application
of Theorem~\ref{maxmod} and Theorem~\ref{inverse} together with  
Lemma~\ref{lem:equalityInfnorm} finishes the proof for $s>s_1$.

\smallskip
\noindent{\em Step 2.}
For $s\le s_1$ we use Lemma~\ref{lift} to write $e_V f=\Lambda_r h$ for some
$h\in F^{s+r,\vec a}_{\vec p,q}(\Rn)$;
hence the identity \eqref{eq:liftAppliedtof} holds in $\cd'(\Rn)$ for $e_V f$ and $h$.
Applying~$r_V$ to both sides and using that it commutes with differentiation on~$C_0^\infty$, 
hence on $\cd'$, we obtain \eqref{eq:applyingStructureTau} as an identity in $\cd'(V)$ for
the new $g_0:=(r_V h)\circ\sigma$ and $g_1:=\big(r_V (1-\partial_{x_n}^2)^\mu h\big)\circ\sigma$.

Composing with $\sigma$ yields an identity in $\cd'(U)$, when $\eta_{k,\beta}\circ\sigma$ is treated
using cut-off functions. E.g.\ we can take $\chi,\chi_1\in C_0^\infty(U)$ with $\chi\equiv 1$ on 
$\supp(f\circ\sigma)+\overline{B}(0,r)=:W_r$ and $\supp\chi\subset W_{2r}$,
while $\chi_1\equiv 1$ on $W_{3r}$ and $\supp\chi_1\subset W_{4r}$. This entails
\begin{equation}
  \label{eq:localVersionChi}
  \chi\cdot f\circ\sigma = 
  \sum_{l = 0}^{d_n-\mu} \eta_{n,l}\chi \, \partial_{x_n}^{2l} (\chi_1 g_1) +
  \sum_{k=1}^{n-1} \sideset{}{'}\sum_{|\beta|\leq 2d_0} \eta_{k,\beta}\circ\sigma \cdot\chi \, \partial^\beta (\chi_1 g_0).
\end{equation}
Using $e_U$ on both sides (and omitting $\Rn$ in the spaces),
Lemma~\ref{lem:equalityInfnorm} and Lemma~\ref{mult} imply
\begin{align}
  \|\, f\circ\sigma\, |F^{s,\vec a}_{\vec p,q}(U)\|^d
  \le c\sum_{l=0}^{d_n-\mu} \|\, e_U (\partial_{x_n}^{2l} (\chi_1g_1))\, |F^{s,\vec a}_{\vec p,q}\|^d
  + c \sideset{}{'}\sum_{|\beta|\leq 2d_0} \|\, e_U(\partial^\beta(\chi_1g_0))\, |F^{s,\vec a}_{\vec
    p,q}\|^d. 
\end{align}
As $e_U$ and differentiation commute on $\ce'(U)\ni\chi_1 g_j$,
Lemma~\ref{derivative}(i) leads to an estimate from above. But
Lemma~\ref{lem:equalityInfnorm} applies since the supports are in $W_{4r}$, so with
$\widetilde\chi_1:=\chi_1\circ\tau$ we find that the above is less than or equal to 
\begin{equation}
  \begin{split}
  c \|\, e_U(\chi_1 g_1)\, |F^{s+r_\mu,\vec a}_{\vec p,q} \|^d + 
  c \|\, e_U(\chi_1 g_0)\, |F^{s+r,\vec a}_{\vec p,q}&\|^d\\
  = c \big\|\, \big(\widetilde\chi_1\cdot r_V (1-\partial_{x_n}^2)^\mu h\big)\circ\sigma\,
  \big|F^{s+r_\mu,\vec a}_{\vec p,q}(U) &\big\|^d  
  + c \|\, (\widetilde\chi_1 \cdot r_V h)\circ\sigma\, |F^{s+r,\vec a}_{\vec p,q}(U)\|^d.
  \end{split}
\end{equation}
Using Step 1 and Lemma~\ref{mult}, Lemma~\ref{lem:lift}, Lemma~\ref{lift} and 
Lemma~\ref{lem:equalityInfnorm}, this entails
\begin{equation}
  \begin{split}
    \|\, f\circ\sigma\, |F^{s,\vec a}_{\vec p,q}(U)\|^d
&\le
  c \|\, (1-\partial_{x_n}^2)^\mu  h\, |F^{s+r_\mu,\vec a}_{\vec p,q}\|^d
  + c \|\, h \, |F^{s+r,\vec a}_{\vec p,q}\|^d
\\
& \le c \|\, \Lambda_r^{-1} e_V f\, |F^{s+r,\vec a}_{\vec p,q}\|^d
   \le c \|\, f\, |F^{s,\vec a}_{\vec p,q}(V)\|^d.
  \end{split}
\end{equation}
This shows the local theorem for $s\leq s_1$.
\end{proof}

There is also a local version of Theorem~\ref{coo2-thm}, with similar proof, namely

\begin{thm}
  \label{coo2_local-thm}
Let $\sigma_j: U_j\to V_j$, $j=1,\ldots,m$, be $C^\infty$ bijections,
where $U_j,V_j \subset \R^{N_j}$ are open. 
When $\vec a,\vec p$ fulfill \eqref{group1}--\eqref{group2}
and when $f\in F^{s,\vec a}_{\vec p,q}(U_1\times\dots\times U_m)$ has
compact support, then~\eqref{eq:localVersion} holds true for 
$U=U_1\times\dots\times U_m$ and $V=V_1\times\dots\times V_m$. 
\end{thm}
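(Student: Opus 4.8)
The plan is to combine the tensor-factorisation argument behind Theorem~\ref{coo2-thm} with the localisation techniques already deployed in Theorem~\ref{thm:localVersion}. As in the global multi-group case, I would write $\sigma=\sigma_1\circ\cdots\circ\sigma_m$, where each factor acts as a bounded diffeomorphism on one block $\R^{N_j}$ and as the identity on the remaining variables; the hypotheses \eqref{group1}--\eqref{group2} guarantee that within each block the integral exponents $p_j$ and the anisotropic weights $a_j$ are constant, so that each single factor falls exactly into the framework of Theorem~\ref{thm:localVersion} after a harmless permutation moving the active block to the front (the observation in Section~\ref{group-ssect} that the passive variables $x''$ need not be a single coordinate applies verbatim). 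The desired estimate \eqref{eq:localVersion} would then follow by composing the bounds for the individual factors, exactly as in the chain of inequalities preceding Theorem~\ref{coo2-thm}.

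The main point to handle carefully is that each intermediate composition $f\circ\sigma_m\circ\cdots\circ\sigma_{j+1}$ must remain compactly supported inside the appropriate product open set, so that Theorem~\ref{thm:localVersion} can be reapplied to the next factor $\sigma_j$. Since each $\sigma_j:U_j\to V_j$ is a bijection and $f$ has compact support contained in $U=U_1\times\cdots\times U_m$, the support of $f\circ\sigma_m\circ\cdots\circ\sigma_{j+1}$ is the image of $\supp f$ under a continuous map and is therefore compact; moreover it is contained in $U_1\times\cdots\times U_j\times V_{j+1}\times\cdots\times V_m$, which is the correct product domain on which the next factor $\sigma_j\otimes\operatorname{id}$ is a $C^\infty$-bijection onto $U_1\times\cdots\times U_{j-1}\times V_j\times\cdots\times V_m$. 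First I would verify this support bookkeeping explicitly, then invoke Theorem~\ref{thm:localVersion} at each stage, noting that the constant $c$ produced there depends only on the diffeomorphism and on the (compact) support of the function to which it is applied.

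Accumulating these one gets
\begin{equation}
  \|\, f\circ\sigma\, | F^{s,\vec a}_{\vec p,q}(U)\|
  \le c_1\,\|\, f\circ\sigma_m\circ\cdots\circ\sigma_2\, | F^{s,\vec a}_{\vec p,q}(U_1\times V_2\times\cdots\times V_m)\|
  \le\cdots\le c\,\|\, f\, | F^{s,\vec a}_{\vec p,q}(V)\|,
\end{equation}
where each $c_k$ depends only on $\sigma_{k}$ and on the support of the corresponding intermediate function, hence ultimately only on $\sigma$ and $\supp f$ as claimed.

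The step I expect to be the genuine obstacle is the reduction of each factor to the hypotheses of Theorem~\ref{thm:localVersion}: that theorem is stated only for maps of the special form \eqref{eq:conditionsOn_sigma} acting on the last $n-1$ coordinates, whereas here a block $\sigma_j$ sits amid other blocks. This is precisely the interlacing issue addressed in Section~\ref{group-ssect}, and the resolution is the same—embed the active block as $x'$ and group all remaining variables (both before and after it) into the passive $x''$, which is legitimate because no interlacing of exponents occurs within a block. I would spell out that the localised maximal-function estimate underlying Theorem~\ref{thm:localVersion}, and in particular the change of variables in Step~1 of its proof that requires the relevant $p_j$ to be equal, goes through in this blocked setting; once this is granted, the remaining arguments are the routine support-tracking and composition described above.
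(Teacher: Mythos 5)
Your overall strategy is exactly the one the paper intends: the paper offers no separate argument for Theorem~\ref{coo2_local-thm} beyond the phrase ``with similar proof'', meaning precisely the factorisation of $\sigma$ into single-block maps, each handled by Theorem~\ref{thm:localVersion} in its blocked form (via the observations of Section~\ref{group-ssect}), with the constants composed as in the chain preceding Theorem~\ref{coo2-thm}. Your reading that $f$ should be compactly supported in $V=V_1\times\dots\times V_m$ (so that \eqref{eq:localVersion} makes sense) is also the right one.

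However, the support bookkeeping --- the very step you single out to ``verify explicitly'' --- is stated backwards, and as written the iteration does not compose. Writing $T_i$ for $\sigma_i$ extended by the identity to the other blocks, the intermediate function $h_j:=f\circ T_m\circ\dots\circ T_{j+1}$ feeds its first $j$ blocks \emph{unchanged} into $f$, whose domain is $V$; hence $h_j$ is defined and compactly supported in $E_j:=V_1\times\dots\times V_j\times U_{j+1}\times\dots\times U_m$, not in $U_1\times\dots\times U_j\times V_{j+1}\times\dots\times V_m$ as you claim. With your domains the next step cannot even be formed: you assert that $\sigma_j\otimes\operatorname{id}$ maps your set \emph{onto} $U_1\times\dots\times U_{j-1}\times V_j\times\dots\times V_m$, whereas to define $h_{j-1}=h_j\circ T_j$ you need $T_j$ to map \emph{into} the domain of $h_j$. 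The correct bookkeeping is: $T_j\colon E_{j-1}\to E_j$ is a $C^\infty$-bijection acting only on block $j$ (where all exponents equal $p_j$ and all weights equal $a_j$, so the blocked Theorem~\ref{thm:localVersion} applies), $h_{j-1}=h_j\circ T_j$ has compact support $T_j^{-1}(\supp h_j)$ in $E_{j-1}$, and $E_m=V$, $E_0=U$, $h_m=f$, $h_0=f\circ\sigma$. Then each step gives $\|\,h_{j-1}\,|F^{s,\vec a}_{\vec p,q}(E_{j-1})\|\le c_j\,\|\,h_j\,|F^{s,\vec a}_{\vec p,q}(E_j)\|$ with $c_j=c(\supp h_j,\sigma_j)$, and iterating from $j=1$ to $j=m$ yields \eqref{eq:localVersion}; in particular the first inequality of your displayed chain should read $\|\,f\circ T_m\circ\dots\circ T_2\,|F^{s,\vec a}_{\vec p,q}(V_1\times U_2\times\dots\times U_m)\|$. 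With this repair your proof coincides with the paper's.
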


As a preparation for our coming work~\cite{JoMHSi2},
we include a natural extension to the case of an infinite cylinder, where $\supp f$ is only
required to be compact on cross sections:

\begin{thm}\label{thm:infiniteCylinder}
Let $\sigma: U\times\R\to V\times\R$ be a $C^\infty$-bijection on the
form in \eqref{eq:conditionsOn_sigma}, and $U,V\subset\R^{n-1}$ open.
If \eqref{eq:conditionsOn_ap} holds and
$f\in F^{s,\vec a}_{\vec p,q}(V\times\R)$ has $\supp f \subset K\times\R$, whereby $K\subset V$
is compact, then $f\circ\sigma\in F^{s,\vec a}_{\vec p,q}(U\times\R)$ and 
\begin{equation}
\|\, f\circ\sigma\, | F^{s,\vec a}_{\vec p,q}(U\times\R) \|
\leq c(\supp f,\sigma) \|\, f\, | F^{s,\vec a}_{\vec p,q}(\Rn) \|.
\end{equation}
\end{thm}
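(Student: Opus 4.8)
The plan is to run the argument of Theorem~\ref{thm:localVersion} almost verbatim, replacing the rôle of the compact set $K$ by the closed \emph{thickened cylinder} determined by $\supp f$, and exploiting that $\sigma$ fixes the last coordinate so that all geometric data are invariant under translation in $x_n$. Since $\supp f\subset K\times\R$ with $K\subset V$ compact, we have $\sigma^{-1}(K\times\R)=K'\times\R$ with $K':=\sigma'^{-1}(K)\subset U$ compact; choosing $r\in\,]0,1[\,$ so small that $6r$ lies below both $\dist(K,\R^{n-1}\setminus V)$ and $\dist(K',\R^{n-1}\setminus U)$, every estimate below will only involve $\sigma'$ and $\tau'$ on fixed compact neighbourhoods of $K'$, respectively $K$, in $\R^{n-1}$. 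On these neighbourhoods $\sigma',\tau'$ and all their derivatives are bounded, so the constants $c_\sigma,C_{\alpha,\sigma},C_{\alpha,\tau}$ and the matrix set $\Theta$ from the proof of Theorem~\ref{coo1} are finite, with values depending only on $\sigma$ and $\supp f$.

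First I would record a cylinder version of Lemma~\ref{lem:equalityInfnorm}: if $\supp g\subset K\times\R$ with $\dist(K,\R^{n-1}\setminus V)>2r$, then $\|\,g\,|F^{s,\vec a}_{\vec p,q}(V\times\R)\|=\|\,e_V g\,|F^{s,\vec a}_{\vec p,q}(\Rn)\|$. The proof of Lemma~\ref{lem:equalityInfnorm} carries over, since it only uses that the supports of $k_j*e_Vg$ and $k_j*(\tilde g-e_Vg)$ are separated; here the separation takes place in the $x'$-variables, where the distance exceeds $2r$ for \emph{every} $x_n$. The finiteness $\|\,e_Vg\,|F(\Rn)\|<\infty$ follows by writing $e_Vg=\zeta' \tilde g$ for a cut-off $\zeta'(x')\in C_0^\infty(V)$ equal to $1$ near $K$ and invoking Lemma~\ref{mult}, as $\zeta'\in C^\infty_{L_\infty}$. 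With this identity available one bounds by $\|\,f\,|F(V\times\R)\|=\|\,e_Vf\,|F(\Rn)\|$ throughout, which is the right-hand side of the asserted estimate.

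For $s>s_1$, cf.~\eqref{eq:defOfs1}, I would then repeat Step~1 of Theorem~\ref{thm:localVersion}: extend $f$ by $0$ to $e_Vf$, apply Theorem~\ref{local} with $\supp k_0,\supp k\subset B(0,r)$, and estimate $k_j*(e_Vf\circ\sigma)$ by Taylor expanding $\sigma'$ and $f$ in the $x'$-variables exactly as in \eqref{eq:sigmaTaylor}--\eqref{eq:taylor_f_orderK-1}. The change of variables $x'\mapsto\tau'(\cdot)$ leading to \eqref{eq:endSubstep11}, which needs $p_1=\dots=p_{n-1}$, now produces $\sup|\det J\tau'|^{1/p_0}$ over the compact set $\{x'\mid\exists x_n\colon(x',x_n)\in\sigma(W)\}$, where $W=\supp(f\circ\sigma)+\overline{B}(0,r)$; this is finite. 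Likewise $\Theta$, the matrices $J\sigma'(x')$ over that same compact $x'$-range, is bounded, so the maximal-function estimate \eqref{eq:justBeforeTh2} goes through and Theorem~\ref{maxmod} and Theorem~\ref{inverse} apply. The only structural change is that the outer $x_n$-integration now runs over all of $\R$ on both sides of every inequality; since $\sigma$ fixes $x_n$ and $\partial^\alpha\sigma$ is $x_n$-independent, no estimate deteriorates, giving $\|\,f\circ\sigma\,|F(U\times\R)\|\le c\,\|\,f\,|F(V\times\R)\|$. The passage to $s\le s_1$ is handled as in Step~2 of Theorem~\ref{thm:localVersion}, via $e_Vf=\Lambda_r h$ and the lift identity \eqref{eq:liftAppliedtof}, except that the localising cut-offs $\chi,\chi_1$ are taken to depend on $x'$ only (compactly supported in $V$ resp.\ $U$ in the cross-section, constant in $x_n$); these lie in $C^\infty_{L_\infty}$, hence are admissible multipliers, and the cylinder form of Lemma~\ref{lem:equalityInfnorm} replaces the original.

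I expect the main obstacle to be precisely the bookkeeping that no quantity in the proof of Theorem~\ref{coo1} secretly requires compactness in $x_n$: one must check that each supremum of a geometric quantity (Jacobians, entries of $\Theta$, the $C_{\alpha,\sigma}$) is attained on a set compact in $x'$ and merely translation-invariant in $x_n$, so that the resulting constants depend only on $\sigma$ and $\supp f$. An alternative, perhaps more transparent route would be to decompose $f=\sum_{m\in\Z}\phi_m(x_n)f$ along a unit partition of unity in $x_n$, apply Theorem~\ref{thm:localVersion} to each compactly supported piece with an $m$-independent constant (by the same translation invariance), and resum; but this needs a localisation principle for $L_{\vec p}(\ell_q)$ in the outermost variable that is not recorded in the excerpt, so I would favour the direct adaptation above.
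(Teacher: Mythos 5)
Your proposal is correct and takes essentially the same route as the paper's own proof: adapt Theorem~\ref{thm:localVersion} with $r$ chosen from the cross-sectional distances, use the cylinder version of Lemma~\ref{lem:equalityInfnorm} (the paper's \eqref{eq:equalityInfNorm}), take $W=(\sigma'^{-1}(K)+\overline{B}(0,r))\times\R$ so all geometric constants are controlled on sets compact in $x'$ and translation-invariant in $x_n$, and for $s\le s_1$ run Step~2 of Theorem~\ref{thm:localVersion} with $x_n$-independent cut-offs $\chi,\chi_1\in C^\infty_{L_\infty}$. Your explicit justification of the finiteness of $\|\, e_{V\times\R} f\, |F^{s,\vec a}_{\vec p,q}(\Rn)\|$ via a cross-sectional multiplier $\zeta'(x')$ and Lemma~\ref{mult} is a small addition that the paper leaves implicit.
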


\begin{proof}
We adapt the proof of Theorem~\ref{thm:localVersion}:
in Step~1  we take $r\in\,]0,1[$ so small that
$6r$ is less than both $\dist(K,\R^{n-1}\setminus V)$ and $\dist({\sigma'}^{-1}(K),\R^{n-1}\setminus U)$.
Since the extension by zero $e_{V\times\R}f$
is well defined, as $K\subset V$ is compact, it is an immediate corollary to the proof of
Lemma~\ref{lem:equalityInfnorm} that
\begin{equation}
  \|\, f\, | F^{s,\vec a}_{\vec p,q}(V\times\R)\| = \|\, e_{V\times\R} f\, | F^{s,\vec a}_{\vec
    p,q}(\Rn)\|.
\label{eq:equalityInfNorm}
\end{equation}
Then the proof for $s>s_1$ follows that of Theorem~\ref{thm:localVersion}, with
$W:=(\sigma'^{-1}(K)+\overline{B}(0,r))\times\R$. 

For $s\le s_1$ we have $e_{V\times\R}f =\Lambda_r  h$ 
for some $h\in F^{s+r,\vec a}_{\vec p,q}(\Rn)$; cf.\ Lemma~\ref{lift}. 
Hence~\eqref{eq:applyingStructureTau} holds as an
identity in $\cd'(V\times\R)$ for $g_1:=\big(r_{V\times\R}(1-\partial_{x_{n+1}}^2)^\mu
h\big)\circ\sigma$ and $g_0:=(r_{V\times\R} h)\circ\sigma$. 

The $\eta_{k,\beta}\circ \sigma$ are controlled using cut-off functions  
$\chi,\chi_1\in C^\infty_{L_\infty}(U)$ with similar properties in terms of the sets
$W_r=(\sigma'^{-1}(K)+\overline{B}(0,r))\times\R$. Thus we obtain
\eqref{eq:localVersionChi} in $\cd'(U\times\R)$. 

Now, as in \eqref{eq:equalityInfNorm} it is seen that $f\circ\sigma$
and $e_{U\times\R} (\chi\cdot f\circ\sigma)$ have identical norms,
so the estimates in Step 2 of the proof of Theorem~\ref{thm:localVersion} finish the proof,
mutatis mutandis.
\end{proof}

\subsection{Isotropic Spaces}

Going to the other extreme, when also $a_n = a_0$ and $p_n=p_0$,
then the Lizorkin--Triebel spaces are invariant under any bounded diffeomorphism 
(i.e.\ without \eqref{eq:conditionsOn_sigma}), 
since in that case we can just change variables in all coordinates,
in particular in \eqref{eq:step1beforeCC}--\eqref{eq:endSubstep11}. Moreover, we can adapt 
Proposition~\ref{prop:invarianceAll} by taking $d_n=d_0$ and $\mu=0$ in the proof; and the set-up
prior to Theorem~\ref{coo1} is also easily modified to the isotropic situation. Hence we obtain 

\begin{cor}\label{thm:isotropicDiffeomorphism}
When $\sigma:\Rn\to\Rn$ is any bounded diffeomorphism, then 
$f\mapsto f\circ\sigma$ is a linear homeomorphism of $F^s_{p,q}(\Rn)$ onto itself for all $s\in\R$.
\end{cor}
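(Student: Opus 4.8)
The plan is to deduce Corollary~\ref{thm:isotropicDiffeomorphism} from the machinery already built for the anisotropic, mixed-norm case, specialising to $\vec a=(1,\ldots,1)$ and $\vec p=(p,\ldots,p)$, i.e.\ $F^s_{p,q}(\Rn)$. The key observation is that in the fully isotropic situation the structural hypotheses \eqref{eq:conditionsOn_ap}, \eqref{eq:conditionsOn_sigma}, and the block conditions \eqref{group1}--\eqref{group3} all become vacuous or trivially satisfiable: all integral exponents coincide and all anisotropy weights equal $1$, so there is no longer any obstruction to letting $\sigma$ mix \emph{all} coordinates simultaneously. Hence the restriction \eqref{eq:conditionsOn_sigma}, which forced $\sigma$ to fix the last variable, can be dropped.

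Concretely, I would first revisit the reduction in Proposition~\ref{prop:invarianceAll}. Its proof uses the lift $\Lambda_r$ (Lemma~\ref{lift}) together with the factorisation \eqref{eq:liftAppliedtof}, where the $x_n$-block was treated separately via the fractional power $(1-\partial_{x_n}^2)^\mu$ precisely to accommodate an irrational weight ratio $a_n/a_0$. In the isotropic case one simply takes $d_n=d_0$ and $\mu=0$, so the unconventional split disappears and \eqref{eq:liftAppliedtof} reduces to $f=\sum_{k=1}^n (1-\partial_{x_k}^2)^{d_0}h$ with $d_0=r/2\in\N$; the argument then yields the reduction to large $s$ with no reference to a distinguished coordinate. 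Next I would inspect the proof of Theorem~\ref{coo1}: the only place where \eqref{eq:conditionsOn_sigma} was genuinely exploited is the change of variables producing \eqref{eq:step1beforeCC}--\eqref{eq:endSubstep11}, where $p_1=\dots=p_{n-1}$ was used to substitute in the integral over $\R^{n-1}$ while the $x_n$-integration was left untouched. With $p_n=p_0=p$ one may change variables in \emph{all} $n$ coordinates at once, using $\sup_y|\det J\tau(y)|^{1/p}$ in place of the $(n-1)$-dimensional Jacobian factor; the Taylor expansions of $\sigma$ and of $f$ in Steps~1--4, together with the maximal-function estimate from Theorem~\ref{maxmod} (now with $\Theta$ a set of $n\times n$ matrices constrained by \eqref{eq:requirement1I}--\eqref{eq:requirement2I}) and the reconstruction via Theorem~\ref{inverse}, go through unchanged.

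I would then assemble these observations: since the set-up preceding Theorem~\ref{coo1} (the choice of even $k_0,k$ with \eqref{eq:suppk0k}, the functions $\psi_\theta$ from \eqref{eq:psitheta}, and the verification of the hypotheses $A,B,C,D<\infty$ of Theorem~\ref{maxmod}) is insensitive to whether $\sigma$ fixes a coordinate, the large-$s$ estimate \eqref{eq:invarianceAllSufficient} holds for any bounded diffeomorphism $\sigma:\Rn\to\Rn$. Combined with the isotropic version of Proposition~\ref{prop:invarianceAll}, this gives boundedness of $f\mapsto f\circ\sigma$ on $F^s_{p,q}(\Rn)$ for all $s\in\R$. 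Applying the same conclusion to $\tau=\sigma^{-1}$, which is again a bounded diffeomorphism, yields the reverse inequality and hence that $f\mapsto f\circ\sigma$ is a linear homeomorphism onto itself.

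The main obstacle is not any new estimate but the bookkeeping of verifying that every use of the structural hypotheses \eqref{eq:conditionsOn_ap} and \eqref{eq:conditionsOn_sigma} in the proofs of Proposition~\ref{prop:invarianceAll} and Theorem~\ref{coo1} is either vacuous or survives the passage to a full, all-coordinates change of variables. I expect the delicate point to be the Jacobian substitution at \eqref{eq:step1beforeCC}--\eqref{eq:endSubstep11}: one must confirm that with $p_n=p_0$ the single common exponent $p$ lets the mixed $L_{\vec p}$-norm collapse to an honest $L_p$-norm, so that the change of variables in $\Rn$ costs only the uniform factor $(\sup_y|\det J\tau(y)|)^{1/p}$, which is finite by \eqref{det}. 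Once this is in place, the whole apparatus applies verbatim, and the corollary follows.
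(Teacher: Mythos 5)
Your proposal is correct and follows essentially the same route as the paper: the authors likewise obtain the corollary by taking $d_n=d_0$, $\mu=0$ in the proof of Proposition~\ref{prop:invarianceAll}, changing variables in all $n$ coordinates in \eqref{eq:step1beforeCC}--\eqref{eq:endSubstep11} using the common exponent $p$, and adapting the set-up before Theorem~\ref{coo1} (in particular $\Theta$ becoming $n\times n$ matrices). Your write-up simply spells out in more detail the same adaptations that the paper records in its brief remark preceding the corollary.
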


This is known from work of Triebel~\cite[Th.~4.3.2]{T92},
which also contains a corresponding result for Besov spaces. (It is this
proof we extended to mixed norms in the previous section.)
The result has also been obtained recently by Scharf~\cite{Sch13}, who covered
all $s\in\R$ by means of an extended notion of atomic decompositions.

In an analogous way,
we also obtain an isotropic counterpart to Theorem~\ref{thm:localVersion}:

\begin{cor}\label{thm:isotropicCompactSupport}
When $\sigma: U\to V$ is a $C^\infty$-bijection between open sets $U,V\subset\Rn$,
then $f\circ\sigma \in F^s_{p,q}(U)$ for every $f\in F^s_{p,q}(V)$ having compact support and 
\begin{equation}
  \|\, f\circ \sigma\, | F^s_{p,q}(U)\| 
  \leq c(\supp f, \sigma) \|\, f\, | F^s_{p,q}(V)\|.
\end{equation}
\end{cor}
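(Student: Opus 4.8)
The plan is to obtain Corollary~\ref{thm:isotropicCompactSupport} as the isotropic specialisation of the anisotropic local result in Theorem~\ref{thm:localVersion}, exactly in parallel to how Corollary~\ref{thm:isotropicDiffeomorphism} was deduced from Theorem~\ref{coo1}. First I would observe that the isotropic spaces $F^s_{p,q}$ correspond to taking $\vec a=(1,\dots,1)$ and $\vec p=(p,\dots,p)$; in this degenerate case the structural conditions \eqref{eq:conditionsOn_ap} hold trivially, since all the $a_\ell$ are equal and all the $p_\ell$ are equal. Thus the restriction \eqref{eq:conditionsOn_sigma} on the form of $\sigma$, which in the anisotropic theory was forced only by the differing exponents and weights in the last coordinate, is no longer needed: every coordinate now belongs to the single block governed by $p$ and by $a=1$.

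Next I would follow the reduction already indicated for Corollary~\ref{thm:isotropicDiffeomorphism} in the paragraph preceding it. The point is that the proof of Theorem~\ref{thm:localVersion} (and of Theorem~\ref{coo1} on which it rests) uses \eqref{eq:conditionsOn_sigma} only to isolate the $x_n$-direction, where $p_n$ and $a_n$ may differ from $p_0$ and $a_0$; the Taylor-expansion and local-means machinery, together with the change of variables in the $x'$-integral via $\tau'=(\sigma')^{-1}$, is carried out precisely in the block of equal exponents. When all exponents coincide, the change of variables may be performed in \emph{all} $n$ coordinates simultaneously, so the same chain of estimates applies to an arbitrary bounded diffeomorphism $\sigma$ on $U$. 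Concretely, in Step~1 of Theorem~\ref{thm:localVersion} one extends $f$ by zero to $e_Vf$, applies the Mean Value Theorem and the maximal-function estimate of Lemma~\ref{lem:lpLTEstimate}, and invokes Theorems~\ref{maxmod} and~\ref{inverse} through Lemma~\ref{lem:equalityInfnorm}; each of these steps is insensitive to whether $\sigma$ fixes the last variable once the exponents are uniform. For the low-smoothness range $s\le s_1$, the lift argument of Step~2 simplifies because one takes $d_n=d_0$ and $\mu=0$ (as noted for Corollary~\ref{thm:isotropicDiffeomorphism}), so the operator $(1-\partial_{x_n}^2)^\mu$ disappears and $\Lambda_r$ reduces to a sum of isotropic lifts of equal order.

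Having made these identifications, the conclusion follows: for $f\in F^s_{p,q}(V)$ with compact support, the adapted argument yields $f\circ\sigma\in F^s_{p,q}(U)$ together with the norm bound
\begin{equation*}
  \|\, f\circ\sigma\, | F^s_{p,q}(U)\|
  \le c(\supp f,\sigma)\, \|\, f\, | F^s_{p,q}(V)\|,
\end{equation*}
the constant depending only on $\sigma$ and $\supp f$ exactly as in \eqref{eq:localVersion}. I expect the main obstacle to be purely expository rather than mathematical: one must verify that \emph{every} place in the proof of Theorem~\ref{thm:localVersion} that exploited the splitting $\sigma(x)=(\sigma'(x'),x_n)$—notably the choice of the index set $\Theta$ of $(n-1)\times(n-1)$ matrices in \eqref{eq:requirement1I}--\eqref{eq:requirement2I}, and the single-variable Taylor expansion in $x_n$—has a clean $n$-dimensional analogue when the last variable is no longer distinguished. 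This amounts to enlarging $\Theta$ to the set of full $n\times n$ matrices $\ca^{-1}=J\sigma(x)$ satisfying the obvious analogues of \eqref{eq:requirement1I}--\eqml{eq:requirement2I}, and Taylor-expanding $\sigma$ in all coordinates; since no competing exponent blocks the change of variables, this is routine, and the result is simply recorded as the isotropic corollary.
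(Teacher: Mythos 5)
Your proposal follows essentially the same route as the paper: the authors likewise obtain Corollary~\ref{thm:isotropicCompactSupport} by re-running the proof of Theorem~\ref{thm:localVersion} in the isotropic situation, where the change of variables can be performed in all $n$ coordinates (cf.\ \eqref{eq:step1beforeCC}--\eqref{eq:endSubstep11}), the set-up prior to Theorem~\ref{coo1} is modified to full $n\times n$ Jacobian matrices, and Step~2 simplifies by taking $d_n=d_0$ and $\mu=0$. Your phrase ``isotropic specialisation'' is slightly off, since an arbitrary $C^\infty$-bijection does not satisfy \eqref{eq:conditionsOn_sigma} and so the statement of Theorem~\ref{thm:localVersion} cannot literally be specialised, but you immediately correct for this by adapting the proof rather than the statement, which is exactly what the paper does.
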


\appendix

\section{The Higher-Order Chain Rule}\label{app:higherOrderCR}
For convenience we give a formula for the higher order derivative of a 
composite map
\begin{equation}
  \Rn\xrightarrow{\ f\ } \R^m\xrightarrow{\ g\ }\C.
\end{equation}
Namely, when $f$, $g$ are $C^k$ and  $x_0\in\Rn$, then for every
multi-index $\gamma$ with $1\le |\gamma|\le k$,
\begin{equation}
   \partial^\gamma( g\circ f)(x_0)
  =
\sum_{1\le|\alpha|\le |\gamma|}\partial^{\alpha} g(f(x_0))
  \sum_{\substack{\forall j\colon \alpha_j=\sum n_{\beta^j}\\ \gamma=\sum_{j,\beta^j} n_{\beta^j}\beta^j}}
\gamma!
\prod_{\substack{j=1,\dots,m\\1\le|\beta^j|\le |\gamma|}} \frac1{n_{\beta^j}!}
\Big(\frac{\partial^{\beta^j}f_j(x_0)}{\beta^j!}\Big)^{n_{\beta^j}}.
\label{HCR-eq}
\end{equation}
Hereby the first sum is over multi-indices $\alpha=(\alpha_1,\dots,\alpha_m)$, which in the
second are split
\begin{equation}
  \alpha_1=\sum_{1\le|\beta^1|\le |\gamma|}n_{\beta^1},\ \dots\ , 
  \alpha_m=\sum_{1\le|\beta^m|\le|\gamma|}n_{\beta^m}  
\end{equation}
into integers $n_{\beta^j}\ge0$
(parametrised by $\beta^j=(\beta^j_1,\dots,\beta^j_n)$ in $\N_0^n$, with upper index $j$) 
that fulfil the constraint 
\begin{equation}
  \gamma=\sum_{j=1}^m\sum_{1\le|\beta^j|\le|\gamma|} n_{\beta^j}\beta^j.
\label{gamma-condition}
\end{equation}
Formula \eqref{HCR-eq} and \eqref{gamma-condition} result from Taylor's limit formula:
$g(y+y_0)=\sum_{|\alpha|\le k} c_\alpha y^\alpha +o(|y|^k)$ that holds for $y\to0$
if \emph{and only if}
$c_\alpha=\frac1{\alpha!}\partial^\alpha g(y_0)$ for all $|\alpha|\le k$.
(Necessity is seen recursively for $y\to 0$ along suitable lines; 
sufficiency from the integral remainder.)

Indeed, $k=|\gamma|$ suffices, and with $y=f(x+x_0)-f(x_0)$  Taylor's formula 
applies to both $g$ and to each entry $f_j$ 
(by summing over an auxiliary multi-index $\beta^j\in\N_0^n$),
\begin{equation}
  \begin{split}
    g(f(x+x_0))
  &=\sum_{|\alpha|\le k}\frac{1}{\alpha!}\partial^{\alpha} g(f(x_0))y_1^{\alpha_1}\dots
  y_m^{\alpha_m} +o(|y|^k)
\\
  &=\sum_{|\alpha|\le k}\partial^{\alpha} g(f(x_0))
  \prod_{j=1}^{m} \frac{1}{\alpha_j!}\big(\sum_{1\le |\beta^j|\le k}
    \frac{x^{\beta^j}}{(\beta^j)!}\partial^{\beta^j}\!f_j(x_0)+o(|x|^k)\big)^{\alpha_j}
  +o(|y|^k).
  \end{split}
\label{HCR'-eq}
\end{equation}
Here the first remainder is $o(|x|^k)$ since $o(|y|^k)/|x|^k=o(1)(|f(x+x_0)-f(x_0)|/|x|)^k\to0$.
Using the binomial formula and expanding $\prod_{j=1}^m$, 
the other remainders are also seen to contribute by terms that
are $o(|x|^k)$, or better; whence a single $o(|x|^k)$ suffices.

Hence we shall expand $(\dots)^{\alpha_j}$ using the multinomial formula. 
So we split $\alpha_j=\sum n_{\beta^j}$, with integers $n_{\beta^j}\ge0$ in the sum over all
multi-indices $\beta^j\in\N_0^n$ with $1\le|\beta^j|\le k$.  
The corresponding multinomial coefficient is $\alpha_j!/\prod_{\beta^j} (n_{\beta^j})!$,
so \eqref{HCR'-eq} yields
\begin{equation}
  \begin{split}
    g(f(x+x_0)) &=\sum_{|\alpha|\le k}\partial^{\alpha} g(f(x_0))
  \prod_{j=1}^{m} \sum_{\alpha_j=\sum n_{\beta^j}}
\prod_{1\le|\beta^j|\le k} \frac1{n_{\beta^j}!}
\Big(\frac{x^{\beta^j}}{\beta^j!}\partial^{\beta^j}\! f_j(x_0)\Big)^{n_{\beta^j}}
  +o(|x|^k).
  \end{split}
\label{HCR''-eq}
\end{equation}
Calculating these products, of factors having a choice of
$\alpha_j=\sum n_{\beta^j}$ for each $j=1,\dots,m$, one obtains 
polynomials $x^\omega$ associated to multi-indices $\omega
=\sum_{j=1}^m\sum_{1\le|\beta^j|\le k}n_{\beta^j}\beta^j$.

For $|\omega|>k$ these are $o(|x|^k)$, hence contribute to the
remainder. Thus modified, \eqref{HCR''-eq} is Taylor's formula of order $k$ for $g\circ f$, so 
that $\partial^\gamma(g\circ f)(x_0)/\gamma!$ is given by the
coefficient of $x^\omega$ for $\omega=\gamma$, which yields \eqref{gamma-condition} 
and \eqref{HCR-eq}.

This concise proof has seemingly not been worked out before,
so it should be interesting in its own right. 
E.g.~the Taylor expansions make the presence of the $\beta^j$ obvious,
and the condition $\gamma=\sum_{j,\beta^j} n_{\beta^j}\beta^j$ is natural.
Also the constants $\gamma!/\prod n_{\beta^j}!$ and $(\beta^j)!^{-n_{\beta^j}}$
lead to easy applications.
Clearly $\partial^\alpha g(f(x_0))$ is multiplied by
a polynomial in the derivatives of $f_1$, \dots, $f_m$, which has degree 
$\sum_{j=1}^m\sum_{\beta^j} n_{\beta^j}=\sum_j\alpha_j=|\alpha|$.

The formula \eqref{HCR-eq} itself is well known 
for $n=1=m$ as the Faa di Bruno formula; cf.\ \cite{Jsn02} for its history.
For higher dimensions, the formulas seem to have been less explicit.

The other contributions we know have been rather less straightforward, 
because of reductions, say to $f,g$ being polynomials (or to finite Taylor series), and/or by use
of lengthy combinatorial arguments with recursively given polynomials,
which replace the sum over the $\beta^j$ in \eqref{HCR-eq}; such as the Bell polynomials
that are used in e.g.\ \cite[Thm.~4.2.4]{Rod93}.

Closest to the present approach, we have found the contributions
\cite{Spd05} and \cite{Frae78} in case of one and several variables, respectively.

\providecommand{\bysame}{\leavevmode\hbox to3em{\hrulefill}\thinspace}

\end{document}